\DeclareMathAlphabet{\pazocal}{OMS}{zplm}{m}{n}
\newtheorem{theorem}{Theorem}[section]
\newtheorem{lemma}[theorem]{Lemma}
\newtheorem{proposition}[theorem]{Proposition}
\newtheorem{main}{Theorem}
\theoremstyle{definition}
\newtheorem{definition}[theorem]{Definition}
\theoremstyle{remark}
\newtheorem{remark}[theorem]{Remark}
\numberwithin{equation}{section}
\newcommand{\R}{\ensuremath{\mathbb{R}}}
\newcommand{\N}{\ensuremath{\mathbb{N}}}
\renewcommand{\L}{\ensuremath{\pazocal{L}}}
\renewcommand{\c}{ {\mathbf{c}}}
\newcommand{\B}{\mathcal{B}}
\newcommand{\BS}{ {\mathbf{B}}}
\newcommand{\OB}{\overline{\mathcal{B}}}
\renewcommand{\u}{\ensuremath{\pazocal{U}}}
\newcommand{\ub}{\mathcal{U}}
\newcommand{\us}{\mathbf{U}}
\renewcommand{\v}{\ensuremath{\pazocal{V}}}
\newcommand{\vs}{ {\mathbf{V}}}
\newcommand{\ws} { {\mathbf{W}}}
\newcommand{\set}[1]{\left\{#1\right\}}
\newcommand{\la}{\lambda}
\newcommand{\ga}{\gamma}
\newcommand{\ep}{\varepsilon}
\newcommand{\f}{\infty}
\newcommand{\de}{\delta}
\newcommand{\om}{\omega}
\newcommand{\al}{\alpha}
\newcommand{\lle}{\preccurlyeq}
\newcommand{\lge}{\succcurlyeq}
\newcommand{\si}{\sigma}
\newcommand{\La}{\Lambda}
\newcommand{\ra}{\rightarrow}
\begin{document}

\baselineskip=17pt

\title{On the bifurcation set of unique expansions}

\author{Charlene Kalle}
\address[C. Kalle]{Mathematical Institute, University of Leiden, PO Box 9512, 2300 RA Leiden, The Netherlands}
\email{kallecccj@math.leidenuniv.nl}

\author{Derong Kong}
 
\address[D. Kong]{Mathematical Institute, University of Leiden, PO Box 9512, 2300 RA Leiden, The Netherlands}
\curraddr{College of Mathematics and Statistics, Chongqing University, 401331 Chongqing, China}
\email[Corresponding author]{derongkong@126.com}

\author{Wenxia Li}
\address[W. Li]{School  of Mathematical Sciences, Shanghai Key Laboratory of PMMP, East China Normal University, Shanghai 200062,
People's Republic of China}
\email{wxli@math.ecnu.edu.cn}

\author{Fan L\"{u} }
\address[F. L\"u]{Department of Mathematics, Sichuan Normal University, Chengdu 610068, People's Republic of China}
\email{lvfan1123@163.com}

\date{}


\begin{abstract}
 Given a positive integer $M$, for $q\in(1, M+1]$ let ${\pazocal{U}}_q$ be the set of $x\in[0, M/(q-1)]$ having a unique $q$-expansion with  the digit  set $\{0, 1,\ldots, M\}$, and let $\mathbf{U}_q$ be the set of corresponding $q$-expansions. Recently, Komornik et al.  showed  in \cite{Komornik_Kong_Li_2015_1} that the topological entropy function
 $H: q \mapsto h_{top}(\mathbf{U}_q)$ is a Devil's staircase in $(1, M+1]$.

 Let $\mathcal{B}$ be the bifurcation set of $H$ defined by
 \[
 \mathcal{B}=\{q\in(1, M+1]: H(p)\ne H(q)\quad\textrm{for any}\quad p\ne q\}.
 \]
 In this paper we analyze the fractal properties of $\mathcal{B}$, and show that for any $q\in \mathcal{B}$,
 \[
\lim_{\delta\rightarrow 0} \dim_H(\mathcal{B}\cap(q-\delta, q+\delta))=\dim_H\ensuremath\pazocal{U}_q,
 \]
 where $\dim_H$ denotes the Hausdorff dimension.
Moreover, when $q\in\mathcal{B}$ the univoque set $\ensuremath\pazocal{U}_q$ is dimensionally homogeneous, i.e.,
$
 \dim_H(\ensuremath\pazocal{U}_q\cap V)=\dim_H\ensuremath\pazocal{U}_q
$
 for any open set $V$ that intersect  $\ensuremath\pazocal{U}_q$.

 As an application  we obtain a dimensional spectrum result for the set $\mathcal{U}$ containing  all bases   $q\in(1, M+1]$ such that  $1$ admits a unique $q$-expansion. In particular, we  prove that for any $t>1$ we have
\[
 \dim_H(\mathcal{U}\cap(1, t])=\max_{ q\le t}\dim_H\pazocal{U}_q.
\]
We also consider the variations of the sets $\mathcal{U}=\mathcal{U}(M)$ when   $M$ changes.
\end{abstract}

\subjclass[2010]{Primary:11A63;  Secondary: 37B10, 28A78}
\keywords{Bifurcation set, topological entropy, univoque sets,  univoque bases, Hausdorff dimensions, Devil's staircase}

\maketitle

\section{Introduction}\label{sec: Introduction}
Fix a positive integer $M$. For any $q\in(1, M+1]$ each $x\in I_{q,M}:=[0, {M}/{(q-1)}]$ has a $q$-expansion, i.e., there exists a sequence $(x_i)=x_1x_2\ldots$ with each  $x_i\in\set{0,1,\ldots, M}$ such that
\begin{equation}\label{e11}
x=\sum_{i=1}^\f\frac{x_i}{q^i}=:\pi_q((x_i)).
\end{equation}
The sequence $(x_i)$ is called a \emph{$q$-expansion} of $x$.  
If no confusion arises the \emph{alphabet} is always assumed to be  $\set{0,1,\ldots, M}$.

Non-integer base expansions have received a lot of attention since the pioneering papers of R\'{e}nyi  \cite{Renyi_1957}  and Parry  \cite{Parry_1960}. It is well known that for any $q\in(1,M+1)$ Lebesgue almost every $x\in I_{q,M}$ has a continuum of $q$-expansions (cf.~\cite{Sidorov_2003,Dajani_DeVries_2007}).  Moreover, for any $k\in\N\cup\set{\aleph_0}$ there exist $q\in(1,M+1]$ and $x\in I_{q,M}$ such that $x$ has precisely $k$ different $q$-expansions (see  e.g., \cite{Erdos_Joo_1992,Sidorov_2009}). For more information on non-integer base expansions we refer  the reader to the survey paper \cite{Komornik_2011} and the references therein.

In this paper we focus on studying unique $q$-expansions. For $q\in(1, M+1]$ let
\begin{equation*}\label{e12}
\u_q:=\set{x\in I_{q,M}: x\textrm{ has a unique }q\textrm{-expansion}},
\end{equation*}
and let $\us_q=\pi_q^{-1}(\u_q)$ be the set of corresponding $q$-expansions. These sets have been the object of study in many articles and  have a very rich topological structure (see for example \cite{Komornik_Loreti_2007,DeVries_Komornik_2008}). Komornik et al.  studied in \cite{Komornik_Kong_Li_2015_1} the Hausdorff dimension of $\u_q$, and showed that the dimension function $D: q \mapsto \dim_H\u_q$ has a Devil's staircase behavior (see also \cite{Allaart-Kong-18}). Moreover, they showed that  the entropy function
\[H: (1,M+1]\ra[0, \log(M+1)];\qquad q \mapsto h_{top}(\us_q)\]
 is a Devil's staircase (see Lemma \ref{l24} below). Recently, Alcaraz Barrera et al.  investigated in \cite{AlcarazBarrera-Baker-Kong-2016} the dynamical properties of $\u_q$, and determined the maximal intervals on which the entropy function $H$ is constant.

Let $\B$ be the \emph{bifurcation set} of the function $H$ defined by
\begin{equation*}\label{e12}
\B=\set{q\in(1, M+1]: H(p)\ne H(q)\textrm{ for any } p\ne q}.
\end{equation*}
Then $\B$ is the set of bases where the entropy function $H$ is not locally constant. In \cite{AlcarazBarrera-Baker-Kong-2016} Alcaraz Barrera et al.~gave a characterization of $\B$ and showed that $\B$ has full Hausdorff dimension.  In particular,  we have
\begin{equation} \label{e25}
\B=(q_{KL}, M+1]\setminus\bigcup[p_L, p_R],
\end{equation}
where $q_{KL}$ is the \emph{Komornik-Loreti constant} (cf.~\cite{Komornik_Loreti_2002}) and the union on the right hand side  is countable and pairwise disjoint (see Section \ref{sec:preliminary} below for more explanation). 

From \cite{DeVries_Komornik_2008} we know that the univoque set $\u_q$ has a fractal structure and might have isolated points. Our first result states that for $q\in\B$ the univoque set $\u_q$ is \emph{dimensionally homogeneous}, i.e., the local Hausdorff dimension of $\u_q$ equals the full dimension of $\u_q$.
\begin{main}\label{t11}
Let $q\in(q_{KL}, M+1]\setminus\bigcup(p_L, p_R]$. Then   for any open set $V\subseteq\R$ with $\u_q\cap V\ne\emptyset$ we have
\[
 \dim_H(\u_q\cap V)=\dim_H\u_q.
\]
\end{main}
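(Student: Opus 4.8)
The plan is to exploit the symbolic dynamics of $\US_q$ together with the characterization \eqref{e25} of $\B$. Recall that $\dim_H \u_q = h_{top}(\US_q)/\log q$, so it suffices to show that for every open $V$ meeting $\u_q$ one has $h_{top}(\US_q \cap \pi_q(V)) = h_{top}(\US_q)$; equivalently, working on the symbolic side, that every cylinder of $\US_q$ that contains a point of $\US_q$ carries the full topological entropy of $\US_q$. Since any $x \in \u_q \cap V$ has its unique expansion $(x_i) \in \US_q$ lying in some cylinder $[x_1 \ldots x_n] \cap \US_q \subseteq \pi_q(V)$, the claim reduces to a statement purely about the subshift $\US_q$.

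First I would recall the combinatorial description of $\US_q$ via the quasi-greedy expansion $\al(q) = \al_1(q)\al_2(q)\ldots$ of $1$: a sequence $(c_i) \in \{0,\ldots,M\}^{\N}$ belongs to $\US_q$ if and only if $\overline{\al(q)} \prec \sigma^n((c_i)) \prec \al(q)$ whenever $0 < c_n < M$ (here $\overline{c} = M - c$ and $\sigma$ is the shift; see \cite{DeVries_Komornik_2008}). The condition $q > q_{KL}$ guarantees that $\al(q)$ is not periodic-with-a-certain-structure, and more importantly the condition $q \notin \bigcup(p_L, p_R]$ — i.e. $q \in \B$ together with the left endpoints — means, by the results of \cite{AlcarazBarrera-Baker-Kong-2016}, that $\al(q)$ does not end up being the limit from the left of a "matching" or "neutral" plateau; in symbolic terms $\al(q)$ is such that the associated subshift $\US_q$ is \emph{transitive} (this is precisely the dynamical content extracted there, and is the reason the interval of left endpoints is included while the right endpoints are excluded).

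The key step is then: for a transitive subshift $X$ with a point $(c_i)$, and any admissible cylinder $[c_1\ldots c_n]$ that is visited, one can concatenate: given any long block $w$ appearing in $X$ with near-maximal complexity, transitivity produces a connecting block $u$ so that $c_1\ldots c_n\, u\, w$ is again admissible. This shows the complexity of $X$ restricted to the cylinder $[c_1 \ldots c_n]$ — counting blocks $v$ with $c_1\ldots c_n v$ admissible — grows at the same exponential rate as the complexity of $X$ itself, giving $h_{top}(X \cap [c_1\ldots c_n]) = h_{top}(X)$. One must be slightly careful that the relevant notion of "topological entropy of $\US_q$" here is the one used in Lemma~\ref{l24}, namely the exponential growth rate of the number of length-$n$ words, and that restricting to a cylinder does not change this rate; the connecting-word argument does exactly that. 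Translating back through $\pi_q$ (which is finite-to-one and bi-Hölder-ish on these sets, or directly via the dimension formula $\dim_H = h_{top}/\log q$ which holds for $\u_q\cap V$ as well since it is again governed by a subshift cylinder) yields $\dim_H(\u_q \cap V) = h_{top}(\US_q)/\log q = \dim_H \u_q$.

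The main obstacle I anticipate is establishing the transitivity (or the appropriate weak mixing / connecting property) of $\US_q$ precisely for $q$ in the stated set $(q_{KL}, M+1]\setminus\bigcup(p_L,p_R]$, and in particular handling the left endpoints $p_L$, which lie in $\B$'s closure structure and require knowing that $\al(p_L)$ — typically an eventually periodic or substitutive sequence of a special form — still yields a subshift without a "bottleneck" that would confine entropy to a sub-cylinder. This is where the fine classification of the plateaus $[p_L, p_R]$ from \cite{AlcarazBarrera-Baker-Kong-2016} (relative periodic/Thue–Morse-type intervals) must be invoked: on such an interval $\US_q$ is an SFT-like object with a recurrent structure, and the endpoint $p_L$ is exactly the base at which that structure first appears, so transitivity is retained at $p_L$ but the interior and right endpoint introduce a decomposition that breaks it. A secondary, more technical point is passing from the symbolic entropy statement to the Hausdorff-dimension statement for arbitrary open $V$ rather than just cylinders, but this is routine once one notes $\u_q \cap V$ contains a full cylinder's worth of $\u_q$.
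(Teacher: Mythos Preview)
Your proposal has the right skeleton---reducing to a symbolic statement about cylinders carrying full entropy, and recognizing that transitivity of the underlying subshift would deliver this immediately---but the central claim that the subshift is transitive for all $q$ in $(q_{KL}, M+1]\setminus\bigcup(p_L,p_R]$ is false. The paper explicitly records (citing \cite[Lemma~3.3]{AlcarazBarrera-Baker-Kong-2016}) that $(\vs_q,\si)$ is \emph{not} transitive for any $q\in(q_{KL}, q_T)$, where $q_T$ is the base defined in \eqref{e26}. Since $\B$ has points throughout $(q_{KL}, q_T)$, your connecting-word argument simply does not apply there, and no amount of fine-tuning at the left endpoints $p_L$ will rescue it: non-transitivity is the generic situation below $q_T$, not an endpoint pathology. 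A secondary issue is that you work with $\us_q$, which need not be a subshift at all; the paper passes to $\vs_q$ via Lemma~\ref{l32} precisely to have a genuine subshift to work with.

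The paper's actual route around the failure of transitivity is a two-stage argument. First, for $q=p_L$ a left endpoint of a plateau (Lemma~\ref{l33}): in the range $[q_T, M+1]$ one does have transitivity of $\vs_{p_L}$ and your argument works essentially as written; in the range $(q_{KL}, q_T)$ the paper instead invokes the existence of a unique transitive subshift of finite type $X_{p_L}\subset\vs_{p_L}$ with $h_{top}(X_{p_L})=h_{top}(\vs_{p_L})$ (Lemma~\ref{l29}(ii)), together with a connecting lemma (Lemma~\ref{l29}(iii)) that joins an \emph{arbitrary} word of $\L(\vs_{p_L})$---not just those coming from $X_{p_L}$---to a periodic sequence in $X_{p_L}$. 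Second, for general $q\in\B$ (Lemma~\ref{l35}): rather than attempting to prove any form of transitivity, the paper approximates $q$ from below by a sequence of left endpoints $p_L(n)\nearrow q$ (Lemma~\ref{l27}(i)), shows that any prefix $x_1\ldots x_N\in\L(\vs_q)$ already lies in $\L(\vs_{p_L(n)})$ for large $n$ (Lemma~\ref{l34}), applies the already-established left-endpoint case inside the cylinder, and passes to the limit using the continuity of $q\mapsto h_{top}(\vs_q)$ from Lemma~\ref{l24}.

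So the missing idea is this two-range, two-stage structure: replace the blanket transitivity claim by (a) the transitive full-entropy subsystem $X_{p_L}$ together with its connecting lemma for plateaus below $q_T$, and (b) the approximation-by-left-endpoints argument for general $q\in\B$.
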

\begin{remark}
\mbox{}

\begin{enumerate}
\item  Note by (\ref{e25}) that $\B\subset(q_{KL},M+1]\setminus\bigcup(p_L, p_R]$. So Theorem \ref{t11} implies that the univoque set $\u_q$ is dimensionally homogeneous for any $q\in\B$.  
\item {In Theorem \ref{th:dimensionally homogeneous} we give a complete characterization of the set  
\[
\set{q\in(1, M+1]: \u_q\textrm{ is dimensionally homogeneous}}.
\] 
It turns out that  the Lebesgue measure of this set
is  positive and strictly  smaller than $M$. }

\end{enumerate}
\end{remark}

Throughout the paper we will use $\overline{A}$ to denote the topological closure of a set $A\subset\R$. {Our second result presents a close relationship between the  bifurcation set $\overline{\B}$ and the univoque sets $\u_q$.}   
\begin{main}\label{t12}
For any $q\in\overline{\B}$ we have
\[
\lim_{\de\ra 0}\dim_H(\overline{\B}\cap(q-\de, q+\de))=\dim_H\u_q.
\]
\end{main}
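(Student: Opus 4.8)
\emph{Proof proposal.} The plan is to prove separately that
$\limsup_{\de\ra0}\dim_H(\overline{\B}\cap(q-\de,q+\de))\le\dim_H\u_q$
and
$\liminf_{\de\ra0}\dim_H(\overline{\B}\cap(q-\de,q+\de))\ge\dim_H\u_q$.
I will use three ingredients. (i) Write $\al(q)=(\al_i(q))$ for the quasi-greedy expansion of $1$ in base $q$; then $H(q)=h_{top}(\us_q)$ equals the topological entropy of the lexicographic subshift $\VS_q:=\set{(a_i):\overline{\al(q)}\lle\si^n((a_i))\lle\al(q)\ \text{for every }n\ge0}$, and $\dim_H\u_q=H(q)/\log q$; both facts are due to Komornik et al.~\cite{Komornik_Kong_Li_2015_1}. (ii) The coding $q\mapsto\al(q)$ is strictly increasing, and if $\al(p)$ and $\al(q)$ agree on a prefix of length $n$ then $|p-q|\le C\,q^{-n}$ for a constant $C$ locally uniform in $q$; together with \cite{Komornik_Kong_Li_2015_1} this converts exponential growth rates of families of $\al$-prefixes into Hausdorff dimensions of the corresponding sets of bases, with conversion factor $1/\log q$. (iii) By \eqref{e25} and the symbolic description of $\B$ in \cite{AlcarazBarrera-Baker-Kong-2016}, every $p\in\overline{\B}$ satisfies $\si^n(\al(p))\lle\al(p)$ and $\overline{\si^n(\al(p))}\lle\al(p)$ for all $n\ge0$.

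\emph{Upper bound.} If $p\in\overline{\B}$ lies within $\de$ of $q$, then $\al(p)$ agrees with $\al(q)$ on a prefix of length $N=N(\de)$, where $N(\de)\ra\f$ as $\de\ra0$. The central symbolic claim is that, once this long agreement is substituted into the inequalities of (iii), the number of distinct length-$n$ prefixes of $\set{\al(p):p\in\overline{\B}\cap(q-\de,q+\de)}$ grows (in $n$) at exponential rate at most $h_{top}(\VS_q)+\eta(\de)$ with $\eta(\de)\ra0$; informally, such prefixes are pushed into the language of $\VS_q$ up to a perturbation of vanishing exponential cost coming from the divergence of $\al(p)$ from $\al(q)$ beyond position $N$. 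Granting this, one covers $\overline{\B}\cap(q-\de,q+\de)$, for each large $n$, by the at most $e^{n(h_{top}(\VS_q)+\eta(\de))}$ intervals $\set{p:\al(p)\text{ starts with a given admissible length-}n\text{ word}}$, each of diameter $\le C\,q^{-n}$; a routine bound on $s$-dimensional Hausdorff measure then yields $\dim_H(\overline{\B}\cap(q-\de,q+\de))\le(h_{top}(\VS_q)+\eta(\de))/\log q$, and letting $\de\ra0$ gives $\limsup_{\de\ra0}\dim_H(\overline{\B}\cap(q-\de,q+\de))\le h_{top}(\VS_q)/\log q=\dim_H\u_q$.

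\emph{Lower bound.} Assume first $q\in\B$. Since $h_{top}(\VS_q)=h_{top}(\us_q)$, a standard block construction (in the spirit of \cite{Komornik_Kong_Li_2015_1,AlcarazBarrera-Baker-Kong-2016}) provides, for each $\eta>0$, an integer $N$ and a collection $W$ of length-$N$ words with $|W|\ge e^{N(h_{top}(\us_q)-\eta)}$ such that every infinite concatenation $w^{(1)}w^{(2)}\cdots$ with all $w^{(j)}\in W$, preceded by the fixed block $\al_1(q)\cdots\al_N(q)$, is the quasi-greedy expansion of a base lying in $\B\cap(q-\de,q+\de)$: a short buffer inside each block keeps the inequalities of (iii) in force under arbitrary concatenation, at the cost of only an $o(1)$ loss of entropy, and the fixed prefix forces proximity to $q$ by (ii). (Theorem~\ref{t11} is a convenient tool here, guaranteeing that the admissible blocks can be chosen inside a prescribed sub-cylinder of $\us_q$ without losing entropy.) The resulting set of bases carries a nested cylinder structure with contraction ratios $\asymp q^{-N}$ per generation, so a mass-distribution estimate gives it Hausdorff dimension at least $\frac{\log|W|}{N\log q}\ge\frac{h_{top}(\us_q)-\eta}{\log q}$; as this holds for every $\de>0$ we get $\liminf_{\de\ra0}\dim_H(\overline{\B}\cap(q-\de,q+\de))\ge\frac{h_{top}(\us_q)-\eta}{\log q}$, and $\eta\ra0$ settles the case $q\in\B$. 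If instead $q\in\overline{\B}\setminus\B$, then $q$ is an endpoint of one of the removed plateaus $[p_L,p_R]$ and is approached by bases $q'\in\B$; applying the case already proved to such $q'$, arranging $(q'-\de',q'+\de')\subseteq(q-\de,q+\de)$, and using the continuity of the Devil's staircase $q\mapsto\dim_H\u_q$ (so that $\dim_H\u_{q'}\ra\dim_H\u_q$) yields the inequality at $q$ as well.

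\emph{Main obstacle.} The crux, and the step I expect to be hardest, is the central symbolic claim in the upper bound: that the self-referential admissibility inequalities $\si^n(\al(p))\lle\al(p)$ and $\overline{\si^n(\al(p))}\lle\al(p)$ for $p\in\overline{\B}$, after the long common prefix with $\al(q)$ is incorporated, confine the prefixes of $\al(p)$ to the language of $\VS_q$ with only an $o_\de(1)$ increase in exponential growth rate --- and this at \emph{all} prefix lengths, including lengths far beyond the agreement length $N(\de)$, where $\al(p)$ and $\al(q)$ have already separated. Establishing this requires a careful, locally uniform analysis of the comparisons $\si^n(\al(p))$ against $\al(p)$ for such $n$, and it is precisely here that one must use that $q$ lies strictly inside $\B$ in the symbolic sense --- that $\al(q)$ is not eventually of the special, essentially periodic form attached to a plateau $[p_L,p_R]$ --- in order to rule out an anomalous accumulation of bifurcation bases immediately to one side of $q$. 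A secondary nuisance is the non-uniformity of the constant $C$ in $|p-q|\le C\,q^{-n}$, which is absorbed by shrinking $\de$.
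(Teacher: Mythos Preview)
Your two-sided strategy matches the paper's, but the execution diverges in both halves.

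\textbf{Upper bound.} You have located the difficulty in the wrong place. The ``central symbolic claim'' you flag as the main obstacle is unnecessary: since $\overline{\B}\subset\overline{\ub}$, for every $p\in\overline{\B}\cap(q-\de,q+\de)$ Lemma~\ref{l23}(ii) gives
\[
\overline{\al(q+\de)}\prec\overline{\al(p)}\prec\si^n(\al(p))\lle\al(p)\prec\al(q+\de)\qquad(n\ge0),
\]
so $\al(p)\in\us_{q+\de}$ outright. There is no need to squeeze prefixes of $\al(p)$ into $\L(\vs_q)$ with an $o_\de(1)$ error at lengths beyond the agreement length; one simply compares with $\us_{q+\de}$ instead of $\vs_q$ and lets the already-known continuity of $H$ (Lemma~\ref{l24}) absorb the gap. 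The paper packages the dimension transfer as a bi-H\"older estimate on $p\mapsto\pi_{q+\de}(\al(p))$ (Proposition~\ref{p41}, proved via Lemma~\ref{l42}), obtaining $\dim_H(\overline{\B}\cap(q-\de,q+\de))\le(1+\ep)\dim_H\u_{q+\de}$ without any prefix counting.

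\textbf{Lower bound.} Here there is a genuine gap. Your condition (iii) characterizes $\overline{\ub}$, not $\overline{\B}$. By Lemma~\ref{l220} one places a base in $\overline{\B}$ by verifying that its quasi-greedy expansion is irreducible or $*$-irreducible (Definitions~\ref{def:irreducible}, \ref{def:*-irreducible}): for every $j$ with $(a_1\ldots a_j^-)^\f\in\vs$ one needs the extra inequality $a_1\ldots a_j(\overline{a_1\ldots a_j}^+)^\f\prec(a_i)$. Your buffered concatenations are designed only to satisfy (iii), so they land in $\overline{\ub}\cap(q-\de,q+\de)$; nothing you wrote forces irreducibility, and without it the constructed bases could fall inside a removed plateau $(p_L,p_R)$ and miss $\overline{\B}$ entirely. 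Theorem~\ref{t11}, which you invoke, concerns $\u_q$ and says nothing about this.

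The paper resolves the lower bound by reversing your reduction. Instead of handling $q\in\B$ first (where $\al(q)$ may be aperiodic and the irreducibility condition involves infinitely many $j$), it treats left endpoints $p_L$ of plateaus, where $\al(p_L)=(\al_1\ldots\al_m)^\f$ is periodic. Then irreducibility of $\al(p_L)$ settles $1\le j\le m$; periodicity and Lemma~\ref{l53} handle $m<j\le N$; and for $j>N$ the strict inequalities defining the finite-type subshift $\ws_{p_L,N}$ suffice (Lemmas~\ref{l56} and \ref{l57}). Since such $p_L$ are dense in $\overline{\B}$ (Lemma~\ref{l27}), continuity of $q\mapsto\dim_H\u_q$ then transfers the lower bound to every $q\in\overline{\B}$.
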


\begin{remark}\label{r13}\mbox{}

\begin{enumerate}
\item
Since by (\ref{e25}) and (\ref{e24})  the difference between $\B$ and $\OB$ is countable, Theorem \ref{t12} also holds if we replace $\OB$ by $\B$. 

\item {Note that $\dim_H\u_q>0$ for any $q>q_{KL}$ (see Lemma \ref{l24} below). As a consequence of Theorem \ref{t12} it follows that 
\[
q\in\OB\setminus\set{q_{KL}}\quad\Longleftrightarrow\quad \lim_{\de\ra 0}\dim_H(\OB\cap(q-\de, q+\de))=\dim_H\u_q>0.
\]
Recently, Allaart et al.~\cite[Corollary 3]{Allaart-Baker-Kong-17} gave another characterization of $\overline{\B}$, and showed that 
\[
q\in\OB\setminus\set{q_{KL}}\quad\Longleftrightarrow\quad \lim_{\de\ra 0}\dim_H(\ub\cap(q-\de, q+\de))=\dim_H\u_q>0,
\]
where $\ub:=\set{q\in(1,M+1]: 1\in\u_q}$.  
}

\end{enumerate}
\end{remark}

It is well-known that the  {univoque set} $\u_q$ has a close connection with the set $\ub=\ub(M)$ of \emph{univoque bases} $q\in(1, M+1]$ for which $1$ has a unique $q$-expansion with alphabet $\set{0,1,\ldots,M}$. For example, in \cite{DeVries_Komornik_2008} De Vries and Komornik showed that $\u_q$ is closed if and only if $q\notin\overline{\ub}$. The set $\ub$ has many interesting properties itself. Erd\H{o}s et al.  showed in \cite{Erdos_Joo_Komornik_1990} that $\ub$ is an uncountable set of zero Lebesgue measure. Dar\'{o}czy and K\'{a}tai proved in \cite{Darczy_Katai_1995} that the Hausdorff dimension of $\ub$ is 1 (see also \cite{Komornik_Kong_Li_2015_1}). Komornik and Loreti showed  in \cite{Komornik_Loreti_2002} that the smallest element of $\ub$ is  $q_{KL}$.  In \cite{Komornik_Loreti_2007} the same authors studied the topological properties of $\ub$, and showed that its closure $\overline{\ub}$ is a Cantor set. Recently, Kong et al.  proved in \cite{Kong_Li_Lv_Vries2016} that for any $q\in\overline{\ub}$ we have
\begin{equation}\label{eq12}
\dim_H(\overline{\ub}\cap(q-\de, q+\de))>0\qquad \textrm{for any }\de>0.
\end{equation}

On a different note, in \cite{BCIT13} Bonanno et al.~introduced a set
\begin{equation}\label{e12}
\Lambda = \{ x \in [0,1] \, : \, S^k x \le x \textrm{ for all }n\ge 0 \},
\end{equation}
where $S$ is the tent map defined by $S : x \mapsto \min\{2x, 2-2x \}$ and showed that there is a one to one correspondence between the set $\ub(1)$ and the set $\Lambda \backslash \mathbb Q_1$, where $\mathbb Q_1$ is the set of all rationals with odd denominator. This link is based on work by Allouche and Cosnard (see \cite{All83,AC83,AC01}), who related the set $\ub(1)$ to kneading sequences of unimodal maps. The authors of \cite{BCIT13} also explored a relationship   between these sets and the real slice of the boundary of the Mandelbrot set.

\begin{center}
\begin{figure}[h!]
  \centering
  \includegraphics[width=12cm]{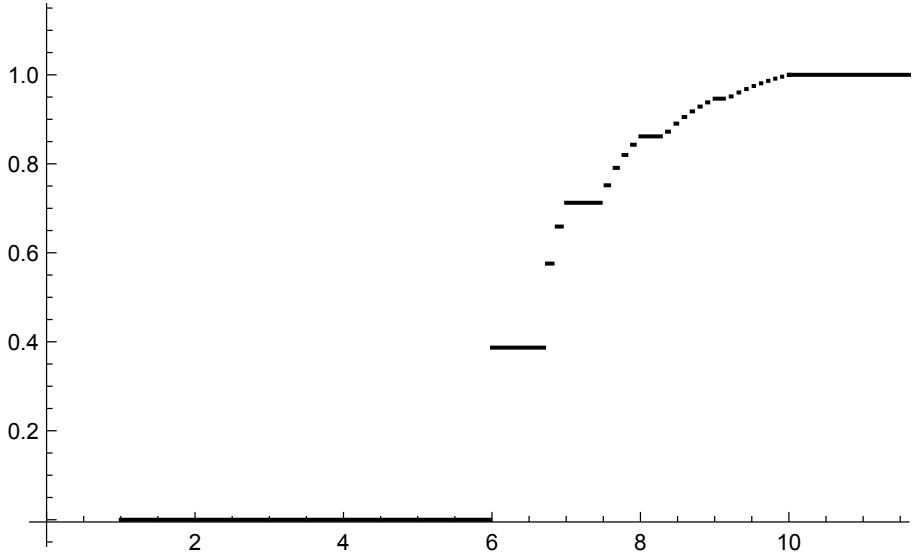}
  \caption{The asymptotic graph  of the function $\phi(t)=\dim_H(\ub\cap(1,t])$ for $t\in[4, 11.5]$ with $M=9$ and $q_{KL}=q_{KL}(9)\approx 5.97592$. }\label{fig:1}
\end{figure}
\end{center}

{By using  Theorem \ref{t12} we investigate the dimensional spectrum of ${\ub}$. Our next result    strengthens   the relationship  between $\u_q$ and $\ub$.}

\begin{main}\label{t14}
For any $t>1$ we have
\[
\dim_H({\ub}\cap(1, t])=\max_{q\le t} \dim_H\u_q.
\]
Moreover, the function $\phi(t):=\dim_H ({\ub}\cap(1, t])$ is a Devil's staircase on $(1, \f)$.
\end{main}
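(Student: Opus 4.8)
The plan is to deduce the identity $\dim_H(\ub\cap(1,t]) = \max_{q\le t}\dim_H\u_q$ by combining Theorem \ref{t12} with the structural description \eqref{e25} of $\overline{\B}$ and the known relation between $\ub$ and $\overline{\ub}$. First I would dispose of the range $t\le q_{KL}$: here $\ub\cap(1,t]$ is contained in $\{q_{KL}\}$ (since $q_{KL}=\min\ub$), so both sides are $0$ and there is nothing to prove; thus assume $t>q_{KL}$. For the lower bound I would argue that $\ub\cap(1,t]$, or rather its closure, must meet every maximal interval structure finely enough: since $\ub$ is contained in $\overline{\B}$ up to a countable set, and the Hausdorff dimension of a set containing a dense-in-itself piece near a point $q$ is governed by Theorem \ref{t12}, one gets $\dim_H(\ub\cap(1,t])\ge \dim_H\u_q$ for every $q\in\ub$ with $q\le t$; taking the supremum over such $q$ and invoking continuity/monotonicity of $D:q\mapsto\dim_H\u_q$ on $(q_{KL},M+1]$ (which is a Devil's staircase by Lemma \ref{l24}, hence continuous and nondecreasing) upgrades this to $\ge\max_{q\le t}\dim_H\u_q$, the maximum being attained because $D$ is continuous and $[q_{KL},t]$ compact.

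For the upper bound I would use that $\ub\subseteq\overline{\ub}\subseteq\overline{\B}\cup(\text{countable set})$ together with Theorem \ref{t12}: cover $[q_{KL},t]$ by finitely many small intervals $(q_i-\de,q_i+\de)$; on each, Theorem \ref{t12} gives $\dim_H(\overline{\B}\cap(q_i-\de,q_i+\de))$ close to $\dim_H\u_{q_i}\le D(t)=\max_{q\le t}\dim_H\u_q$, and countable stability of Hausdorff dimension then yields $\dim_H(\overline{\B}\cap(1,t])\le\max_{q\le t}\dim_H\u_q$, whence the same bound for the smaller set $\ub\cap(1,t]$. Combining the two bounds gives the displayed equality.

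For the Devil's staircase claim I would verify the three defining properties of $\phi(t)=\dim_H(\ub\cap(1,t])$ on $(1,\infty)$: (i) $\phi$ is nondecreasing, which is immediate from monotonicity of the sets $\ub\cap(1,t]$ in $t$; (ii) $\phi$ is continuous, which follows from the just-proved formula $\phi(t)=\max_{q\le t}D(q)$ since $D$ is continuous (as a Devil's staircase itself) and the running maximum of a continuous function is continuous; and (iii) $\phi$ is \emph{locally constant on a dense open set} — here I would invoke \eqref{e25}: on each connected component $(p_L,p_R)$ of the complement of $\overline{\B}$, and on the plateaus of $D$ (the maximal intervals on which $H$, equivalently $D$, is constant, described by Alcaraz Barrera et al.), the running maximum $\max_{q\le t}D(q)$ does not change, and these intervals are dense; moreover $\phi$ is eventually constant equal to $\dim_H\ub(M)=1$ for $t\ge M+1$ by Dar\'oczy--K\'atai.

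The main obstacle I anticipate is the lower bound: passing from "$\overline{\B}$ has dimension $\dim_H\u_q$ near $q$" (Theorem \ref{t12}) to the same statement for the sparser set $\ub$ itself, since a priori $\ub$ could be much smaller than $\overline{\B}$. The resolution should come from Remark \ref{r13}(2), i.e. the Allaart--Baker--Kong characterization $q\in\overline{\B}\setminus\{q_{KL}\}\Longleftrightarrow\lim_{\de\to0}\dim_H(\ub\cap(q-\de,q+\de))=\dim_H\u_q$, which is precisely the statement that $\ub$ (not merely $\overline{\B}$) is locally full-dimensional near every point of $\overline{\B}$; feeding this directly into the covering argument above makes both the lower and the upper bound go through for $\ub$ without any loss, and the rest is bookkeeping with countable stability and the properties of Devil's staircases.
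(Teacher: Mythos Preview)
Your upper-bound argument has a genuine gap: the inclusion $\overline{\ub}\subseteq\overline{\B}\cup(\text{countable set})$ is false. Lemma~\ref{l26} gives the \emph{reverse} inclusion $\overline{\B}\subset\overline{\ub}$, and the difference $\overline{\ub}\setminus\overline{\B}$ is uncountable: every plateau $(p_L,p_R)$ of $H$ contains points of $\overline{\ub}$ (for instance the point $\hat p_L$ discussed before Theorem~\ref{th:dimensionally homogeneous}), and by \eqref{eq12} the local dimension of $\overline{\ub}$ there is positive, so in fact each plateau meets $\overline{\ub}$ in a set of positive Hausdorff dimension. Hence you cannot control $\dim_H(\ub\cap(1,t])$ by $\dim_H(\overline{\B}\cap(1,t])$ via Theorem~\ref{t12}. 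The Allaart--Baker--Kong result you invoke at the end does not rescue this: it computes the local dimension of $\ub$ only at points of $\overline{\B}$, so it says nothing about the uncountably many centres $q\in\overline{\ub}\setminus\overline{\B}$ that your compactness covering must handle.

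The paper closes this gap differently. It observes that the H\"older estimate behind the upper half of Theorem~\ref{t12} (Lemma~\ref{l43}) uses only that the centre lies in $\overline{\ub}$, not in $\overline{\B}$. This yields Lemma~\ref{l61}: for every $q\in\overline{\ub}\setminus\{M+1\}$ and every $\ep>0$ there exists $\de>0$ with $\dim_H(\overline{\ub}\cap(q-\de,q+\de))\le(1+\ep)\dim_H\u_{q+\de}$. The finite-cover argument (Lemma~\ref{l62}) then runs directly on $\overline{\ub}$, and one passes to $\ub$ at the very end using only that $\overline{\ub}\setminus\ub$ is countable. For the lower bound the paper's route is also more economical than yours: since the maximiser $q_*$ of $D(q)=\dim_H\u_q$ on $[q_{KL},t]$ must lie in $(q_{KL},t]\setminus\bigcup(p_L,p_R]\subset\overline{\B}$, Theorem~\ref{t12} at $q_*$ combined with $\overline{\B}\subset\overline{\ub}$ gives the bound immediately, with no appeal to the external Allaart--Baker--Kong result.

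One further slip: $D(q)=\dim_H\u_q$ is \emph{not} nondecreasing---Remark~\ref{r25}(1) records $D'<0$ almost everywhere. It is $H$, not $D$, that is the Devil's staircase in Lemma~\ref{l24}. This does not ultimately break the argument (the maximum still exists by continuity and compactness), but the monotonicity claim should be removed.
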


\begin{remark}\label{r15}\mbox{}

\begin{enumerate}
\item In \cite{Komornik_Loreti_2007} it was shown that $\overline{\ub}\setminus\ub$ is a countable set. Hence, Theorem \ref{t14} still holds if we replace ${\ub}$ by $\overline{\ub}$.

\item  Results from \cite{Komornik_Kong_Li_2015_1} (see Lemma \ref{l24} below) give that $\dim_H\u_q=1$ if and only if $q=M+1$. In view of Theorem \ref{t14} we obtain that $\dim_H(\ub\cap(1, t])<1$ for any $t<M+1$. This implies that the Hausdorff dimension of $\ub$ is concentrated on the neighborhood of $M+1$.
\end{enumerate}
\end{remark}

As an application of Theorem \ref{t14} we investigate the variations of $\ub=\ub(M)$ when the parameter $M$ changes. For $K\in\set{1,2,\ldots, M}$, let $\ub(K)$ be the set of bases $q\in(1, K+1]$ such that $1$ has a unique $q$-expansion with respect to  the alphabet $\set{0,1,\ldots, K}$. Theorem \ref{t16} characterizes the Hausdorff dimensions  of the intersection $\ub(M)\cap\ub(K)$
and the difference $\ub(M)\setminus\ub(K)$. Indeed, we prove the following stronger result. 

\begin{main}\label{t16}\mbox{}

\begin{enumerate}[{\rm(i)}]
\item  Let $K\in\set{1,2,\ldots, M}$. Then 
\[\dim_H\left(\bigcap_{J=K}^M\ub(J)\right)=\max_{q\le K+1}\dim_H\u_q.\]

\item  For any positive integer $L$ we have 
\[\dim_H\left(\ub(L)\setminus\bigcup_{J\ne L}\ub(J)\right)=1.\]
\end{enumerate}
\end{main}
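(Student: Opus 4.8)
The plan is to deduce both parts from Theorem \ref{t14} (read for the alphabet sizes $L$ and $L+1$, not only for the ambient $M$), the dimension facts in Remark \ref{r15}, and one elementary \emph{embedding property}: for positive integers $K\le J$ we have $\ub(J)\cap(1,K+1]\subseteq\ub(K)$. To see this, note that if $q\in(1,K+1]$ then $q-1\le K$, so $1\le K/(q-1)$ and hence $1$ admits at least one $q$-expansion over the alphabet $\set{0,1,\ldots,K}$; any such expansion is also a $q$-expansion over the larger alphabet $\set{0,1,\ldots,J}$, so uniqueness over the $J$-alphabet forces uniqueness over the $K$-alphabet, i.e.\ $q\in\ub(K)$. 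This is the only new ingredient; the rest is bookkeeping together with the known dimension estimates.

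For part (i), applying the embedding property to each pair $(J,M)$ with $K\le J\le M$ gives $\ub(M)\cap(1,K+1]\subseteq\ub(M)\cap(1,J+1]\subseteq\ub(J)$, hence $\ub(M)\cap(1,K+1]\subseteq\bigcap_{J=K}^{M}\ub(J)$; the reverse inclusion is immediate from $\bigcap_{J=K}^{M}\ub(J)\subseteq\ub(M)\cap\ub(K)\subseteq\ub(M)\cap(1,K+1]$. Thus $\bigcap_{J=K}^{M}\ub(J)=\ub(M)\cap(1,K+1]$, and Theorem \ref{t14} with $t=K+1$ yields $\dim_H\big(\bigcap_{J=K}^{M}\ub(J)\big)=\max_{q\le K+1}\dim_H\u_q$.

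For part (ii), fix $L\ge1$. I would first use the embedding property to pin down the set exactly, namely
\begin{equation*}
\ub(L)\setminus\bigcup_{J\ne L}\ub(J)=\big(\ub(L)\cap(L,L+1]\big)\setminus\ub(L+1).
\end{equation*}
For "$\subseteq$": if $q\in\ub(L)$ with $q\le L$ then (for $L\ge2$) the embedding property gives $q\in\ub(L)\cap(1,L]\subseteq\ub(L-1)$, so $q$ is deleted; hence survivors lie in $\ub(L)\cap(L,L+1]$, and since $\ub(J)\cap(L,L+1]\subseteq\ub(J)\cap(1,L+2]\subseteq\ub(L+1)$ for every $J\ge L+1$ (embedding property with the pair $(J,L+1)$), a survivor must also avoid $\ub(L+1)$. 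For "$\supseteq$": if $q\in\ub(L)\cap(L,L+1]$ with $q\notin\ub(L+1)$, then $q\notin\ub(J)$ for $J<L$ (as $\ub(J)\subseteq(1,J+1]\subseteq(1,L]$) and $q\notin\ub(J)$ for $J>L$ (again by $\ub(J)\cap(L,L+1]\subseteq\ub(L+1)$). Now I bound the dimension of the right-hand side: first, $\dim_H\ub(L)=1$, since by Theorem \ref{t14} with $t=L+1$ and Remark \ref{r15}(2) (with $M=L$) we have $\dim_H\ub(L)=\max_{q\le L+1}\dim_H\u_q\ge\dim_H\u_{L+1}=1$; second, $\dim_H\big(\ub(L)\cap(1,L]\big)<1$ by Remark \ref{r15}(2) (with $M=L$, $t=L$); since $\ub(L)$ is the disjoint union of $\ub(L)\cap(1,L]$ and $\ub(L)\cap(L,L+1]$, these two facts force $\dim_H\big(\ub(L)\cap(L,L+1]\big)=1$. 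Finally $\dim_H\big(\ub(L+1)\cap(L,L+1]\big)\le\dim_H\big(\ub(L+1)\cap(1,L+1]\big)<1$ by Remark \ref{r15}(2) (with $M=L+1$, $t=L+1$), and removing a subset of strictly smaller Hausdorff dimension does not change the dimension, so the set in question has Hausdorff dimension $1$. When $L=1$ there is no index $J<L$, the set equals $\ub(1)\setminus\ub(2)$, and the same two dimension facts conclude the argument.

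I do not expect a genuine obstacle once Theorem \ref{t14} is in hand: the content is the embedding property plus the geometric observation that the full-dimensional part of $\ub(L)$ lives in the top window $(L,L+1]$ and intersects $\ub(L+1)$ only in a set of Hausdorff dimension strictly less than $1$. The one point requiring care is that every application of Theorem \ref{t14} and Remark \ref{r15} above must be read for the appropriate alphabet ($M=L$ or $M=L+1$ rather than the ambient $M$) — legitimate because those statements hold for every positive-integer alphabet — and, relatedly, that part (ii) genuinely needs these results for all sizes $L$.
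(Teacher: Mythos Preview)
Your proof is correct. Part (i) follows the paper's route: both arguments reduce $\bigcap_{J=K}^M\ub(J)$ to $\ub(M)\cap(1,K+1]$ and then invoke Theorem~\ref{t14}. Your embedding property (\emph{if $K\le J$ and $q\le K+1$ then $q\in\ub(J)$ implies $q\in\ub(K)$}) is exactly the content of the paper's Lemma~\ref{l73}; your justification is more elementary (a direct counting of expansions) whereas the paper goes through Lemma~\ref{l72} and the lexicographic characterisation of $\ub$, but the two are equivalent and equally short.

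Part (ii) is where you genuinely diverge. The paper writes
\[
\ub(L)=\Big(\ub(L)\setminus\bigcup_{J\ne L}\ub(J)\Big)\cup\bigcup_{J\ne L}\big(\ub(L)\cap\ub(J)\big),
\]
notes that each pairwise intersection $\ub(L)\cap\ub(J)$ has Hausdorff dimension strictly below~$1$ (by part (i) and Lemma~\ref{l24}), and concludes from $\dim_H\ub(L)=1$. You instead pin down the set explicitly as $(\ub(L)\cap(L,L+1])\setminus\ub(L+1)$, then argue that $\ub(L)\cap(L,L+1]$ carries the full dimension of $\ub(L)$ while the removed piece $\ub(L+1)\cap(1,L+1]$ has dimension strictly below~$1$. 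Both arguments rest on the same inputs---Theorem~\ref{t14} and Remark~\ref{r15}(2), read for the alphabet sizes $L$ and $L+1$---but yours yields the additional structural information that the set in question is exactly $\ub(L)\setminus\ub(L+1)$ restricted to the top window $(L,L+1]$, which is a nice sharpening of what the paper states. Your closing remark that Theorem~\ref{t14} must be applied for varying alphabet sizes is apt and equally applies to the paper's own argument.
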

\begin{remark}\label{r18}
By the proof of Theorem \ref{t16} it follows  that for $K< M$ the intersection 
\[\bigcap_{J=K}^M\ub(J)=\ub(M)\cap(1, K+1]\]
 is a proper subset of $\ub(K)$. This, together with \eqref{eq12}, implies that for $K<M$ neither the intersection $\bigcap_{J=K}^M\ub(J)$ nor the difference set $\ub(M)\setminus\bigcap_{J=K}^M\ub(J)$ contains isolated points. \end{remark}

{We emphasize that for each $q\in(1, M+1]$ the  univoque set $\u_q$  is related to  the   dynamical system   
\[T_{q,j}: \left[0, \frac{M}{q-1}\right]\ra  \left[0, \frac{M}{q-1}\right];\qquad x \mapsto q x-j \]
for $j\in\set{0,1,\ldots, M}$. On the other hand,  the set $\ub$ contains all   parameters $q\in(1,M+1]$ such that $1$ has a unique $q$-expansion,  and thus  $\ub$ is related  to   infinitely many dynamical systems. }
A similar set up involving a bifurcation set for infinitely many dynamical systems is considered in \cite{BCIT13} (see also \cite{CT13}). They considered  the bifurcation set of an entropy map for a family of maps $\{ T_\alpha: [\alpha -1, \alpha] \to [\alpha -1, \alpha] \}_{\alpha \in [0,1]}$, called $\alpha$-continued fraction transformations \cite{Nakada-1981}, where for each $\al\in[0,1]$ the map $T_\al$ is defined by 
\begin{equation}\label{eq:k1}
T_\al(x)=\left\{\begin{array}{cll}
\frac{1}{|x|}-\lfloor\frac{1}{|x|}+1-\al\rfloor &\textrm{if}&  x\ne 0;\\
0&\textrm{if}& x=0.
\end{array}\right.
\end{equation}
Each map $T_\al$ has  a unique invariant measure $\mu_\alpha$ that is absolutely continuous with respect to the Lebesgue measure. They showed that    the map 
\[\psi: \alpha \mapsto h_{\mu_\alpha}(T_\alpha),\] assigning to each $\alpha$ the measure theoretic  entropy $h_{\mu_{\alpha}}(T_\alpha)$, has countably many intervals on which it is monotonic.  The complement of the union of  these intervals in $[0,1]$, i.e., the bifurcation set of $\psi$ denoted by $F$, has Lebesgue measure 0 (see \cite{KSS12} and \cite{CT13}) and    Hausdorff dimension 1 (see  \cite{BCIT13}). Moreover, in \cite{BCIT13} the authors identified a homeomorphism between the set $F$ and the set $\Lambda\setminus\set{0}$ from (\ref{e12}), giving also a relation to the set $\ub(1)$. In \cite{BCIT13}, however, no information is given on the local structure of $F$. 
Recently, Dajani and the first author identified in \cite{DK}   another set $E$ that is linked to the sets $\ub(1)$, $\Lambda$ and $F$. They investigated a family of symmetric doubling maps $S_\gamma: [-1,1] \to [-1,1]$, given by 
\[S_\gamma (x) = 2x- \gamma \lfloor 2x \rfloor,\]
 where $\lfloor x \rfloor$ denotes the integer part of $x$, and showd that the set $E$ of parameters $\gamma \in [1,2]$ for which the map $S_\gamma$ does not have a piecewise smooth invariant density is homeomorphic to $\Lambda\setminus\set{0}$. { Therefore, the results obtained in this paper about the set $\ub(1)$ can be
used to investigate  the bifurcation sets $E$, $F$ and the set $\Lambda$.}

The rest of the paper is arranged in the following way. In Section \ref{sec:preliminary} we fix the notation and recall some properties of unique $q$-expansions. Moreover, we recall from \cite{AlcarazBarrera-Baker-Kong-2016} some important properties of the bifurcation set $\B$. In Section \ref{sec:proof of thm 2} we give the proof of Theorem \ref{t11}  for the dimensional homogeneousness of $\u_q$.  In Section \ref{sec:aux prop} we prove an auxiliary proposition that will be used to prove Theorem \ref{t12} in  Section \ref{sec:proof of thm1}. The proof  of Theorems \ref{t14} and \ref{t16} will be given in Sections \ref{sec:proof of th1} and \ref{sec:proof of th2}, respectively. We end the paper with some   remarks.

\section{Unique expansions and bifurcation set}\label{sec:preliminary}
In this section we   recall some  properties of unique $q$-expansions and of the bifurcation set $\B$ as well.
First we need some terminology from symbolic dynamics (cf.~\cite{Lind_Marcus_1995}).

\subsection{Symbolic dynamics}
Given a positive integer $M$, let $\{ 0,1, \ldots, M\}^*$ denote the set of all finite strings of symbols from $\{ 0,1, \ldots, M\}$, called {\em words}, together with the empty word denoted by $\epsilon$. Let $\set{0,1,\ldots, M}^\N$ be the set of sequences $(d_i)=d_1d_2\ldots$ with each element $d_i\in\set{0,1,\ldots, M}$. Let $\si$ be the left shift on $\set{0,1,\ldots, M}^\N$ defined by $\si((d_i))=(d_{i+1})$. Then $(\set{0,1,\ldots, M}^\N, \si)$ is a full shift. For a  {word} $\c=c_1\ldots c_n \in \{0,1, \ldots, M\}^*$ we denote by $\c^k=(c_1\ldots c_n)^k$ the $k$-fold concatenation of $\c$ to itself and by $\c^\f=(c_1\ldots c_n)^\f$ the periodic sequence with period block $\c$. Moreover, for a word $\c=c_1\ldots c_n$ with $c_n<M$ we denote by $\c^+$ the word
\[\c^+=c_1\ldots c_{n-1}(c_n+1).\]
 Similarly, for a word $\c=c_1\ldots c_n$ with $c_n>0$ we write $\c^-=c_1\ldots c_{n-1}(c_n-1)$.
 For a sequence $(d_i)\in\set{0,1,\ldots, M}^\N$ we denote its \emph{reflection} by
\[
\overline{(d_i)}=(M-d_1)(M-d_2)\cdots.
\]
Accordingly, for a word $\c=c_1\ldots c_n$ we denote its reflection by $\overline{\c}=(M-c_1)\cdots (M-c_n)$.

On words and sequences we consider the lexicographical ordering $\prec, \lle, \succ$ or $\lge$ which is defined as follows. For two sequences $(c_i), (d_i)\in\set{0,1,\ldots, M}^\N$ we say that $(c_i)\prec (d_i)$ if there exists  $n\in\N$ such that $c_1\ldots c_{n-1}=d_1\ldots d_{n-1}$ and $c_n<d_n$. Moreover, we write $(c_i)\lle (d_i)$ if $(c_i)\prec (d_i)$ or $(c_i)=(d_i)$. Similarly, we write $(c_i)\succ (d_i)$ if $(d_i)\prec (c_i)$, and   $(c_i)\lge (d_i)$ if $(d_i)\lle(c_i)$. We extend this definition to words in the following way. For two words $\om, \nu \in \{0,1,\ldots, M\}^*$ we write $\om\prec\nu$ if $\om 0^\f\prec\nu0^\f$. Accordingly, for a sequence $(d_i)\in\set{0, 1,\ldots, M}^\N$ and a word $\c=c_1\ldots c_m$ we say $(d_i)\prec \c$ if $(d_i)\prec\c0^\f$.

Let $\pazocal{F} \subseteq \set{0,1,\ldots,M}^*$ and let $X=X_{\pazocal F} \subseteq \set{0,1,\ldots,M}^\N$ be the set of those sequences that do not contain any word from $\pazocal{F}$. We call the pair $(X, \si)$ a \emph{subshift}. If $\pazocal{F}$ can be chosen to be a finite set, then $(X, \si)$ is called a \emph{subshift of finite type}. For $n\in\N\cup\set{0}$ we denote by $\L_n(X)$ the set of words of length $n$ occurring in  sequences of $X$. In particular, for $n=0$ we set $\L_0(X)=\set{\epsilon}$. The \emph{languange} of $(X, \si)$ is then defined by
\[
\L(X)=\bigcup_{n=0}^\f \L_n(X).
\]
So, $\L(X)$ is the set of all finite words occurring in sequences from $X$.

For a subshift $(X, \si)$ and a word $\om\in\L(X)$ let $F_X(\om)$ be the \emph{follower set} of $\om$ in $X$ defined by
\begin{equation}\label{e21}
F_X(\om):=\set{(d_i)\in X: d_1\ldots d_{|\om|}=\om},
\end{equation}
where $|\c|$ denotes the length of a word $\c\in\set{0,1,\ldots,M}^*$.

A subshift $(X, \si)$ is called \emph{topologically transitive} (or simply \emph{transitive}) if for any two words $\om, \nu\in\L(X)$ there exists a word $\gamma$ such that $\om\gamma\nu\in\L(X)$. In other words, in a transitive subshift $(X, \si)$ any two words can be ``connected" in $\L(X)$.

The \emph{topological entropy} $h_{top}(X)$ of a subshift $(X, \si)$ is a quantity that indicates its complexity. It is defined by
\begin{equation}\label{e22}
h_{top}(X)=\lim_{n\ra\f}\frac{\log \#\L_n(X)}{n}=\inf_{n\ge 1}\frac{\log \# \L_n(X)}{n},
\end{equation}
where $\# A$ denotes the cardinality of a set $A$. Accordingly, we define the topological entropy of a follower set $F_X(\om)$ by changing $X$ to $F_X(\om)$  in \eqref{e22} if the corresponding limit exists.
Clearly, if $X$ is a transitive subshift, then $h_{top}(F_X(\om))=h_{top}(X)$ for any $\om\in\L(X)$.

\subsection{Unique expansions}
In this subsection we recall some results about unique expansions. For more information on this topic we refer the reader to the survey papers \cite{Sidorov_2003_survey,Komornik_2011} or the book chapter \cite{deVries-Komornik-2016}. For $q\in(1, M+1]$, let
\[\al(q)=\al_1(q)\al_2(q)\ldots\]
be the \emph{quasi-greedy} $q$-expansion of $1$ (cf.~\cite{Daroczy_Katai_1993}), i.e., the lexicographically largest $q$-expansion of $1$ not ending with a string of zeros. The following characterization of quasi-greedy expansions was given in \cite[Theorem 2.2]{Baiocchi_Komornik_2007}.
 \begin{lemma}
 \label{l21}
 The map $q\mapsto \al(q)$ is a strictly increasing bijection from $(1, M+1]$ onto the set of all sequences $(a_i)\in\set{0,1,\ldots, M}^\N$ not ending with $0^\f$ and satisfying
 \[
 a_{n+1}a_{n+2}\ldots \preceq a_1a_2\ldots\qquad\textrm{whenever}\quad a_n<M.
 \]
 \end{lemma}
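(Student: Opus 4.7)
The plan is to prove the lemma in three steps: (I) well-definedness, verifying that $\alpha(q)$ satisfies the admissibility condition; (II) strict monotonicity of $q\mapsto\alpha(q)$; and (III) surjectivity onto the described set of sequences. Throughout I will use that $\alpha(q)$ may be characterized as the lexicographically largest $q$-expansion of $1$ not ending in $0^\infty$; its existence follows by modifying the greedy expansion of $1$ when it terminates.

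For step (I), fix $q\in(1,M+1]$ and write $\alpha(q)=(a_i)$. Suppose $a_n<M$; I would argue by contradiction that $\sigma^n\alpha(q)\succ\alpha(q)$ is impossible. Set $x_n:=\pi_q(\sigma^n\alpha(q))$. If $x_n>1$, then replacing $a_n$ by $a_n+1$ and appending any non-terminating $q$-expansion of $x_n-1\in(0,M/(q-1)]$ yields a lex-larger non-terminating $q$-expansion of $1$, contradicting maximality; hence $x_n\le 1$. A parallel maximality argument shows that $\sigma^n\alpha(q)$ is itself the quasi-greedy $q$-expansion of $x_n$, and the monotonicity of the map $y\mapsto\alpha_q(y)$ with respect to lex-order on $[0,M/(q-1)]$ then yields $\sigma^n\alpha(q)\preceq\alpha(q)$, contradicting the assumption. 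For (II), if $q<q'$, the function $Q\mapsto\pi_Q(\alpha(q))$ is strictly decreasing on $(1,M+1]$, so $\pi_{q'}(\alpha(q))<1=\pi_{q'}(\alpha(q'))$; thus $\alpha(q)$ is a non-terminating $q'$-expansion of a value strictly less than $1$, and the same monotonicity gives $\alpha(q)\prec\alpha_{q'}(1)=\alpha(q')$. For (III), given admissible $(a_i)$, the series $g(Q):=\sum_i a_iQ^{-i}$ is continuous and strictly decreasing on $(1,M+1]$ with $\lim_{Q\to 1^+}g(Q)=\infty$ and $g(M+1)\le 1$, so there is a unique $q\in(1,M+1]$ with $g(q)=1$. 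To show $\alpha(q)=(a_i)$, suppose $(b_i)\succ(a_i)$ were a competing non-terminating $q$-expansion of $1$; at the first position $k$ where they disagree, $b_k>a_k$ and hence $a_k<M$, and the identity $\pi_q(\sigma^k(a_i))-\pi_q(\sigma^k(b_i))=b_k-a_k\ge1$ combined with $\pi_q(\sigma^k(b_i))\ge0$ forces $\pi_q(\sigma^k(a_i))\ge1$; the admissibility condition $\sigma^k(a_i)\preceq(a_i)$ applied iteratively to tails of $(a_i)$ then produces the contradiction.

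The main obstacle is the lack of a direct correspondence between lexicographic and numerical orders on $\{0,\dots,M\}^{\N}$: distinct sequences can represent the same real number in base $q$, so a lex-inequality need not translate to a numerical one. This subtlety must be handled carefully in establishing the monotonicity of $y\mapsto\alpha_q(y)$ used in steps (I) and (II), and also in the final contradiction in step (III), where the iterated use of admissibility on successive tails is the key technical step. The cleanest way through is to work with the maximality characterization of quasi-greedy expansions and track carefully which of several candidate sequences of a given numerical value is in fact the quasi-greedy one.
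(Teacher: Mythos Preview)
The paper does not give its own proof of this lemma; it simply cites \cite[Theorem~2.2]{Baiocchi_Komornik_2007}. So there is nothing in the paper to compare your argument against directly.

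Your outline is the standard Parry--R\'enyi style argument and is essentially correct. A couple of points deserve tightening. First, in steps (I) and (II) you invoke the monotonicity of $y\mapsto\alpha_q(y)$ on $[0,M/(q-1)]$; this is true and easy, but since you are proving the basic lemma from scratch you should include the two-line justification rather than assume it. Second, and more substantively, the final contradiction in step (III) is where the real work lies, and your phrase ``applied iteratively to tails'' hides it. From admissibility one shows (for instance via the supremum argument: set $s=\sup_n\pi_q(\sigma^n(a_i))$ and derive $s\le 1+q^{-1}(s-1)$, hence $s\le 1$) that $\pi_q(\sigma^k(a_i))\le 1$. Combined with your inequality $\pi_q(\sigma^k(a_i))\ge 1$ this gives equality, which forces $b_k-a_k=1$ and $\pi_q(\sigma^k(b_i))=0$, i.e.\ $(b_i)$ ends in $0^\infty$ --- contradicting that $(b_i)$ was chosen non-terminating. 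You should spell this out, since a bare ``contradiction from $\pi_q(\sigma^k(a_i))\ge 1$'' does not work: equality is genuinely possible (e.g.\ when $(a_i)$ is periodic), and it is the non-termination hypothesis on $(b_i)$ that closes the argument.
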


Recall from (\ref{e11}) the definition of the projection map $\pi_q$ for $q\in(1,M+1]$ mapping $\set{0, 1,\ldots, M}^\N$ onto the interval $I_{q, M}=[0, M/(q-1)]$.  In general, $\pi_q$ is not bijective. However,  $\pi_q$ is a bijection between $\us_q=\pi_q^{-1}(\u_q)$ and $\u_q$. The following lexicographical characterization of $\us_q$, or equivalently $\u_q$, was essentially due to Parry  \cite{Parry_1960} (see also \cite{Baiocchi_Komornik_2007}).
\begin{lemma}\mbox{}\label{l22}
Let $q\in(1, M+1]$. Then $(x_i)\in\us_q$ if and only if
\begin{align*}
x_{n+1}x_{n+2}\ldots\prec \al(q) &\qquad \textrm{whenever}\quad x_n<M,\\
\overline{x_{n+1}x_{n+2}\ldots}\prec \al(q)&\qquad \textrm{whenever}\quad x_n>0.
\end{align*}
\end{lemma}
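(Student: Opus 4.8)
The plan is to route everything through greedy and lazy expansions. Recall that every $x\in I_{q,M}$ has a greedy $q$-expansion (its lexicographically largest one) and a lazy $q$-expansion (its lexicographically smallest one), and that every $q$-expansion of $x$ lies lexicographically between the two; hence $x$ has a unique $q$-expansion if and only if its greedy and lazy expansions coincide. So it suffices to describe, in terms of $\al(q)$, when a sequence $(x_i)$ is simultaneously the greedy and the lazy expansion of $\pi_q((x_i))$.

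First I would record the ``metric'' characterization of greediness: $(x_i)$ is the greedy expansion of $\pi_q((x_i))$ if and only if $\pi_q(\sigma^n(x_i))<1$ for every $n\ge 1$ with $x_n<M$. The forward direction is the standard carrying argument (if $x_n<M$ and $\pi_q(\sigma^n(x_i))\ge 1$, replace $x_n$ by $x_n+1$ and append any $q$-expansion of $\pi_q(\sigma^n(x_i))-1\in I_{q,M}$ to obtain a lexicographically larger expansion of the same number); the reverse direction follows by comparing $(x_i)$ with the true greedy expansion at their first discrepancy. The reflection $(x_i)\mapsto\overline{(x_i)}$ sends $q$-expansions of $x$ to $q$-expansions of $M/(q-1)-x$ and interchanges greedy with lazy, so it also gives: $(x_i)$ is lazy if and only if $\pi_q(\overline{\sigma^n(x_i)})<1$ whenever $x_n>0$. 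Thus Lemma~\ref{l22} reduces to showing that these two ``metric'' conditions together are equivalent to the two ``lexicographic'' conditions in the statement.

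The implication from the metric to the lexicographic conditions is the easy one. It uses the standard fact that $\pi_q(\sigma^m\al(q))\le 1$ for every $m\ge 0$ (verified by treating simple and non-simple bases $q$ separately, $\al(q)$ being controlled, up to an eventual period, by the greedy expansion of $1$). Granting this: if $\pi_q((z_i))<1$ but $(z_i)\lge\al(q)$, then $(z_i)\ne\al(q)$ because $\pi_q(\al(q))=1$, so there is a least $m$ with $z_1\cdots z_{m-1}=\al_1(q)\cdots\al_{m-1}(q)$ and $z_m>\al_m(q)$; subtracting $1=\pi_q(\al(q))$ from $\pi_q((z_i))$ yields $\pi_q((z_i))-1=q^{-m}\bigl[(z_m-\al_m(q))+\pi_q(\sigma^m(z_i))-\pi_q(\sigma^m\al(q))\bigr]\ge q^{-m}[1+0-1]=0$, a contradiction. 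Applying this to $(z_i)=\sigma^n(x_i)$ when $x_n<M$, and to $(z_i)=\overline{\sigma^n(x_i)}$ when $x_n>0$, turns the metric conditions into the lexicographic ones.

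The converse is where the real work lies, because $\pi_q$ is \emph{not} monotone for the lexicographic order, so $\sigma^n(x_i)\prec\al(q)$ does not immediately give $\pi_q(\sigma^n(x_i))<1$. By the reflection symmetry it is enough to prove: if $(y_i)\prec\al(q)$ and $\sigma^m(y_i)\prec\al(q)$ for every $m$ with $y_m<M$, then $\pi_q((y_i))<1$. I would decompose $(y_i)=B_1B_2B_3\cdots$ into consecutive blocks, where $k_0=0$ and $k_j$ ($j\ge 1$) is defined by $y_{k_{j-1}+1}\cdots y_{k_j-1}=\al_1(q)\cdots\al_{m_j-1}(q)$ and $y_{k_j}<\al_{m_j}(q)$ with $m_j:=k_j-k_{j-1}$ — that is, $k_j$ marks the first position at which $\sigma^{k_{j-1}}(y_i)$ drops below $\al(q)$. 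Since $y_{k_j}<\al_{m_j}(q)\le M$, the hypothesis forces $\sigma^{k_j}(y_i)\prec\al(q)$, so $k_{j+1}$ exists and the decomposition is infinite; the $j$-th block is $B_j=\al_1(q)\cdots\al_{m_j-1}(q)\,c_j$ with $c_j:=y_{k_j}\le\al_{m_j}(q)-1$. Using $\pi_q(\al(q))=1$ one computes $\pi_q(B_j0^\f)=1-q^{-m_j}\bigl(\al_{m_j}(q)-c_j+\pi_q(\sigma^{m_j}\al(q))\bigr)<1-q^{-m_j}$, the strictness coming from $\pi_q(\sigma^{m_j}\al(q))>0$ as $\al(q)$ does not end in $0^\f$. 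Hence $\pi_q((y_i))=\sum_{j\ge 1}q^{-k_{j-1}}\pi_q(B_j0^\f)<(1-q^{-m_1})+\sum_{j\ge 2}q^{-k_{j-1}}(1-q^{-m_j})$, and since $k_{j-1}+m_j=k_j$ the tail sum telescopes to $q^{-k_1}=q^{-m_1}$, giving $\pi_q((y_i))<1$. Combining the two directions: a sequence satisfying the lexicographic conditions satisfies the metric conditions, hence is both the greedy and the lazy expansion of its value and therefore lies in $\us_q$; conversely a sequence in $\us_q$ is both greedy and lazy, hence satisfies the metric conditions, hence the lexicographic ones. The main obstacle is this last block estimate; the rest is routine bookkeeping with the greedy and lazy algorithms.
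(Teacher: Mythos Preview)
Your argument is correct, and in fact supplies considerably more than the paper does: the paper does not prove Lemma~\ref{l22} at all, but simply attributes it to Parry (with a reference also to Baiocchi--Komornik) and moves on. So there is nothing to compare against on the paper's side.

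Your route---unique expansion $\Leftrightarrow$ greedy $=$ lazy, the metric characterization of greediness via $\pi_q(\sigma^n(x_i))<1$, reflection to handle laziness, and then the equivalence of the metric and lexicographic conditions---is exactly the classical line of argument behind Parry's theorem. The block decomposition and telescoping estimate you give for the harder implication is the standard way to show that a sequence whose shifts stay lexicographically below $\al(q)$ (at the right positions) has $\pi_q$-value strictly below $1$; your handling of the strict inequality (isolating the first block and using $\pi_q(\sigma^{m_j}\al(q))>0$, which holds because $\al(q)$ never ends in $0^\f$) is clean and correct. One small point worth making explicit in a write-up: the hypotheses force $(y_i)$ not to terminate in $M^\f$ (since at the last index $m$ with $y_m<M$ one would get $\sigma^m(y_i)=M^\f\lge\al(q)$), which is what guarantees the block decomposition is genuinely infinite; you use this implicitly when you say ``$k_{j+1}$ exists.''
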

Observe that $\ub=\set{q\in(1, M+1]: \al(q)\in\us_q}$. 
As a corollary of Lemma \ref{l22} we have the following characterizations of $\ub$ and $\overline{\ub}$.
\begin{lemma}\label{l23}\mbox{}
\begin{enumerate}[{\rm(i)}]
  \item $q\in\ub\setminus\set{M+1}$ if and only if the quasi-greedy expansion $\al(q)$ satisfies
\[
\overline{\al(q)}\prec \si^n(\al(q))\prec \al(q)\qquad\textrm{for any }n\ge 1.
\]
  \item $q\in\overline{\ub}$ if and only if the quasi-greedy expansion $\al(q)$ satisfies
  \[
\overline{\al(q)}\prec \si^n(\al(q))\lle  \al(q)\qquad\textrm{for any }n\ge 1.
\]
\end{enumerate}
\end{lemma}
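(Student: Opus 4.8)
The plan is to feed the distinguished sequence $(x_i)=\al(q)$ into Lemma~\ref{l22} and then upgrade the one-sided inequalities it yields to the two-sided strict inequalities of the statement, the upgrade being driven by the quasi-greedy admissibility of $\al(q)$ from Lemma~\ref{l21}; part~(ii) will then follow from part~(i) by a closure argument. Write $\al=\al(q)$ with digits $\al_i$. For part~(i) I use the observation $\ub=\{q\in(1,M+1]:\al\in\us_q\}$. The implication ``$\Leftarrow$'' is immediate: if $\overline{\al}\prec\si^n(\al)\prec\al$ for all $n\ge1$, then the two defining conditions of Lemma~\ref{l22} hold for $(x_i)=\al$, so $\al\in\us_q$, i.e.\ $q\in\ub$, and $\si^1(\al)\prec\al$ forces $\al\ne M^\f$, that is $q\ne M+1$. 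For ``$\Rightarrow$'', let $q\in\ub\setminus\{M+1\}$, so $\al\in\us_q$ and $\al\ne M^\f$; then Lemma~\ref{l22} with $(x_i)=\al$ already gives $\si^n(\al)\prec\al$ whenever $\al_n<M$ and $\si^n(\al)\succ\overline{\al}$ whenever $\al_n>0$, so the only cases left to settle are $\si^n(\al)\prec\al$ when $\al_n=M$ and $\si^n(\al)\succ\overline{\al}$ when $\al_n=0$.

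The core step is the claim that $\si^n(\al)\lle\al$ for \emph{every} $n\ge1$, which I would prove by strong induction on $n$ using only Lemma~\ref{l21} and $\al\ne M^\f$: the case $\al_n<M$ is Lemma~\ref{l21} verbatim; if $\al_1=\dots=\al_n=M$ the inequality is read off by comparing the (finite, as $\al\ne M^\f$) initial $M$-block of $\al$ with that of $\si^n(\al)$; and if $\al_n=M$ while $\al_1\cdots\al_n$ is not all $M$'s, put $m=\max\{i<n:\al_i<M\}$, so Lemma~\ref{l21} gives $M^{n-m}\si^n(\al)=\si^m(\al)\lle\al$, and comparing initial $M$-blocks forces first $\al_1=\dots=\al_{n-m}=M$ and then $\si^n(\al)\lle\si^{n-m}(\al)$, at which point the induction hypothesis at $n-m<n$ closes the step. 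Granting this, the equality $\si^n(\al)=\al$ would make $\al$ purely periodic with period $\al_1\cdots\al_n$; feeding this into Lemma~\ref{l22} — using also that, by Lemma~\ref{l21}, $\al_1<M$ would force every digit of $\al$ to be $<M$ — produces a contradiction, so $\si^n(\al)\prec\al$ for all $n$, which disposes of the first missing case. For the second, reflecting the order inequality turns $\si^n(\al)\lle\al$ into $\si^n(\overline{\al})=\overline{\si^n(\al)}\lge\overline{\al}$ for all $n$; so when $\al_n=0$ (necessarily $n\ge2$, since $\al_1\ge1$) I apply Lemma~\ref{l22} at $m=\max\{i<n:\al_i>0\}$ to get $\si^m(\al)=0^{n-m}\si^n(\al)\succ\overline{\al}$, whence a digit-peeling comparison forces $\al_1=\dots=\al_{n-m}=M$ and $\si^n(\al)\succ\si^{n-m}(\overline{\al})\lge\overline{\al}$. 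This settles both cases and completes part~(i).

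For part~(ii) I would set $\A=\{q\in(1,M+1]:\overline{\al(q)}\prec\si^n(\al(q))\lle\al(q)\ \text{for all }n\ge1\}$. By~(i), $\ub\subseteq\A$ and $M+1\in\A$, and since $q\mapsto\al(q)$ is strictly increasing (Lemma~\ref{l21}) a standard semicontinuity argument gives that $\A$ is closed in $(1,M+1]$, whence $\overline{\ub}\subseteq\A$. Conversely, a base $q\in\A$ for which all these inequalities are strict lies in $\ub$ by~(i); and if instead $\si^n(\al(q))=\al(q)$ for some $n$, so $\al(q)=\c^\f$ is purely periodic, then the standard doubling/substitution construction (as in \cite{Komornik_Loreti_2007,DeVries_Komornik_2008}) applied to the admissible period word $\c$ produces a sequence of bases in $\ub$ converging to $q$, so $q\in\overline{\ub}$; hence $\A\subseteq\overline{\ub}$ and (ii) follows.

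I expect the main obstacle to be precisely the two extremal-digit cases in part~(i): Lemma~\ref{l22} is silent at a digit equal to $0$ or $M$, so one must re-inject the quasi-greedy inequalities of Lemma~\ref{l21}, and the reduction at such a digit lands again on a digit equal to $M$ (or, after reflection, on the reflected sequence), which is exactly why the monotone statement $\si^n(\al)\lle\al$ must be established globally by its own induction before the strict two-sided bounds can be read off. In part~(ii) the delicate point is the approximation, by elements of $\ub$, of a base whose $\al(q)$ is purely periodic.
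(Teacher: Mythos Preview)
The paper does not supply its own argument here: it simply cites \cite[Theorem~2.5]{Vries-Komornik-Loreti-2016} for~(i) and \cite[Theorem~3.9]{Vries-Komornik-Loreti-2016} for~(ii). Your proposal is therefore not a variant of the paper's proof but an attempt at a self-contained derivation from Lemmas~\ref{l21} and~\ref{l22}, which is more than the paper itself does.

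For part~(i) your plan is correct and almost complete. The induction giving $\si^n(\al)\lle\al$ for all $n\ge1$ uses only Lemma~\ref{l21} and is in fact valid for \emph{every} $q\in(1,M+1]$, which is worth stating explicitly since you reuse it (via reflection) in the $\al_n=0$ case. The one underspecified step is the strictness argument: your parenthetical explains why $\al_1<M$ forces all digits $<M$ (so that Lemma~\ref{l22} applies at the period length), but says nothing when $\al_1=M$. A clean patch, using what you have already proved: if $\al$ has minimal period $n$, let $j\le n$ be largest with $\al_j<M$; then Lemma~\ref{l22} gives $\si^j(\al)=M^{n-j}\al\prec\al$, which forces $\al_1=\cdots=\al_{n-j}=M$ and hence $\al\prec\si^{n-j}(\al)$, contradicting $\si^{n-j}(\al)\lle\al$. (Equivalently, this shows the minimal period of a quasi-greedy expansion must end in a digit $<M$, after which Lemma~\ref{l22} applies directly.)

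For part~(ii) the sketch hides real work. The phrase ``a standard semicontinuity argument'' for the closedness of $\A$ does not follow from monotonicity of $q\mapsto\al(q)$ alone: this map is only \emph{left}-continuous, and at a $q$ with periodic $\al(q)=(a_1\cdots a_m)^\f$ the right limit of $\al(\cdot)$ is the greedy expansion $a_1\cdots a_m^+0^\f$, not $\al(q)$. So for a sequence $q_k\searrow q$ in $\A$ one cannot simply pass the defining inequalities of $\A$ to the limit. Likewise, the approximation of a periodic $\al(q)$ by elements of $\ub$ via the doubling construction is correct in spirit but is itself a nontrivial lemma, not a consequence of Lemmas~\ref{l21}--\ref{l22}. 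Both directions of~(ii) therefore require genuine additional input, which is exactly what the paper outsources to \cite{Vries-Komornik-Loreti-2016}.
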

\begin{proof}
Part (i) was shown in \cite[Theorem 2.5]{Vries-Komornik-Loreti-2016} and Part (ii) was proven in \cite[Theorem 3.9]{Vries-Komornik-Loreti-2016}.
\end{proof}

In \cite{DeVries_Komornik_2008} it was shown that $(\us_q, \si)$ is not necessarily a subshift. Inspired by \cite{Komornik_Kong_Li_2015_1} we consider the set $\vs_q$ which contains all sequences $(x_i)\in\set{0,1,\ldots, M}^\N$ satisfying
\[
\overline{\al(q)}\lle\si^n((x_i))\lle \al(q)\qquad\textrm{for all}\quad n\ge 0.
\]
Then $(\vs_q, \si)$ is a subshift (cf.~\cite[Lemma 2.6]{Komornik_Kong_Li_2015_1}). Furthermore, Lemma \ref{l21} implies that the set-valued map $q\mapsto \vs_q$ is increasing, i.e., $\vs_p\subseteq\vs_q$ whenever $p<q$.  

Recall that the Komornik-Loreti constant $q_{KL}$ is the smallest element of $\ub$, which is defined in terms of the classical  \emph{Thue-Morse sequence} $(\tau_i)_{i=0}^\f=01101001\ldots$. Here the sequence $(\tau_i)_{i=0}^\f$ is defined as follows (cf.~\cite{Allouche_Shallit_1999}): $\tau_0=0$, and if $\tau_0\ldots\tau_{2^n-1}$ has already been defined for some $n\ge 0$, then $\tau_{2^n}\ldots\tau_{2^{n+1}-1}=\overline{\tau_0\ldots\tau_{2^n-1}}$. Then the Komornik-Loreti constant $q_{KL}=q_{KL}(M)\in(1, M+1]$ is the unique base satisfying 
\begin{equation}\label{e23}
\al(q_{KL}) =\la_1\la_2\ldots,
\end{equation}
where
\[
\la_i=\left\{
\begin{array}{lll}
  k+\tau_i-\tau_{i-1} & \textrm{if} & M=2k,  \\
  k+\tau_i& \textrm{if} & M=2k+1,
\end{array}\right.
\]
for each $i\ge 1$. We emphasize  that the sequence $(\la_i)$ depends on $M$. By the definition  of the Thue-Morse sequence $(\tau_i)_{i=0}^\f$ it follows that  (cf.~\cite{AlcarazBarrera-Baker-Kong-2016})
\begin{equation}\label{eq:lambda}
\la_{2^n+1}\ldots\la_{2^{n+1}}=\overline{\la_1\ldots\la_{2^n}}\,^+\qquad\textrm{for any}\quad n\ge 0.
\end{equation}

Recall that a function $f: [a, b]\ra \R$ is called a \emph{Devil's staircase} (or \emph{Cantor function}) if $f$ is a continuous and non-decreasing function with $f(a)<f(b)$, and $f$ is locally constant   almost everywhere. 
 The next lemma   summarizes  some results from \cite{Komornik_Kong_Li_2015_1} on the Hausdorff dimension of $\u_q$.
\begin{lemma}\label{l24}\mbox{}
\begin{enumerate}[{\rm(i)}]

  \item  For any $q\in(1, M+1]$ we have
\[
\dim_H\u_q=\frac{h_{top}(\vs_q)}{\log q}.
\]

  \item The entropy function $H: q\mapsto h_{top}(\vs_q)$ is a Devil's staircase in $(1, M+1]$:
\begin{itemize}
\item $H$ is increasing and  continuous  in $(1, M+1]$;
\item $H$ is locally constant  almost everywhere in $(1, M+1]$;
\item $H(q)=0$ if and only if $1<q\le q_{KL}$. Moreover,   $H(q)=\log(M+1)$ if and only if $q=M+1$.
\end{itemize}
\end{enumerate}
\end{lemma}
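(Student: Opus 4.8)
I would sandwich $\dim_H\u_q$ between matching bounds. Put $h:=h_{top}(\vs_q)$. For the upper bound, use that $\us_q\subseteq\vs_q$, that $\pi_q$ is a bijection of $\us_q$ onto $\u_q$, and that $\pi_q$ carries every length-$n$ cylinder onto an interval of length $\tfrac{M}{q-1}q^{-n}$; covering $\u_q\subseteq\pi_q(\vs_q)$ by the $\#\L_n(\vs_q)$ intervals indexed by $\L_n(\vs_q)$ and using $\tfrac1n\log\#\L_n(\vs_q)\to h$ from \eqref{e22} gives $\dim_H\u_q\le h/\log q$. For the lower bound the case $q\le q_{KL}$ is immediate, as then $h\le h_{top}(\vs_{q_{KL}})=0$; so assume $q>q_{KL}$. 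The key point I would establish is that $\vs_p\subseteq\us_q$ whenever $p<q$: for $(x_i)\in\vs_p$ and every $n$, $\si^n((x_i))\lle\al(p)\prec\al(q)$ and $\si^n((x_i))\lge\overline{\al(p)}\succ\overline{\al(q)}$, so $(x_i)\in\us_q$ by Lemma~\ref{l22}. Consequently $\pi_q$ is injective on the subshift $\vs_p$ and $\pi_q(\vs_p)\subseteq\u_q$, so by pushing forward a $\si$-invariant measure on $\vs_p$ of entropy close to $h_{top}(\vs_p)$ and applying the mass distribution principle one gets $\dim_H\u_q\ge\dim_H\pi_q(\vs_p)\ge h_{top}(\vs_p)/\log q$; letting $p\uparrow q$ and invoking the left-continuity of $H$ from Part~(ii) (whose proof does not use Part~(i)) yields the matching lower bound.

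\textbf{Part (ii): monotonicity and boundary values.} Monotonicity of $H$ is the inclusion $\vs_p\subseteq\vs_q$ for $p<q$, which comes from $\al(p)\prec\al(q)$ (Lemma~\ref{l21}). For the extremes I would argue combinatorially: if $\al(q)\neq M^{\f}$ then $\al(q)$ has some digit $<M$, say in position $j{+}1$, which forces the word $M^{\,j+1}$ to be absent from every sequence of $\vs_q$; a subshift of the full shift that omits a word has entropy strictly below $\log(M+1)$, so, since $\vs_{M+1}$ is the full shift, $H(q)=\log(M+1)$ iff $q=M+1$. Moreover $H(q)=0$ for $q\le q_{KL}$ because then $\vs_q\subseteq\vs_{q_{KL}}$ and $\vs_{q_{KL}}$ has subexponential factor complexity (its defining sequence $(\la_i)$ is Thue--Morse--based), whereas for $q>q_{KL}$ the strict inequality $\al(q)\succ\al(q_{KL})$ together with the self-similar structure \eqref{eq:lambda} lets one exhibit two equal-length words that are freely concatenable in $\vs_q$, so $h_{top}(\vs_q)>0$.

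\textbf{Part (ii): continuity and a.e.\ local constancy.} Writing $h_n(q):=\tfrac1n\log\#\L_n(\vs_q)$, the continuity of $q\mapsto\al(q)$ (a consequence of Lemma~\ref{l21}) makes each $h_n$ non-decreasing and right-continuous, hence upper semicontinuous; so $H=\inf_nh_n$ is upper semicontinuous, and being monotone it is right-continuous. The remaining point, left-continuity, I would get by showing that the words occurring in $\vs_q$ but in no $\vs_p$ with $p<q$ grow only subexponentially in length --- each such word being essentially a bounded modification of a prefix of $\al(q)$ or of $\overline{\al(q)}$ --- so that $\bigcup_{p<q}\vs_p$ has the same topological entropy as $\vs_q$ and $H(q^-)=H(q)$. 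For ``$H$ locally constant a.e.'' I would invoke \eqref{e25}: $H$ is locally constant on the open set $(1,q_{KL})\cup\bigcup(p_L,p_R)$, whose complement in $(1,M+1]$ is $\B$ together with a countable set, and $\B$ is Lebesgue-null because $\B\subseteq\overline{\ub}$ (each element of $\B$ is a limit of plateau endpoints $p_L\in\overline{\ub}$) while $|\overline{\ub}|=|\ub|=0$ by Erd\H{o}s--Jo\'o--Komornik and the countability of $\overline{\ub}\setminus\ub$. Combined with $H(1^+)=0<\log(M+1)=H(M+1)$, this yields the Devil's-staircase property.

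\textbf{Main obstacle.} The hard part will be the left lower semicontinuity of $H$ in Part~(ii): controlling the words that appear in $\vs_q$ but in no proper subsystem $\vs_p$, $p<q$, and showing they are too sparse to carry exponential growth. Once that is in hand everything else is routine; in particular the lower bound for $\dim_H\u_q$ drops out of the inclusion $\vs_p\subseteq\us_q$ and a standard pushed-forward-measure estimate, with no explicit Cantor construction inside $\u_q$ needed.
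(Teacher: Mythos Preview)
The paper does not give its own proof of this lemma: it is stated as a summary of results from \cite{Komornik_Kong_Li_2015_1}, so there is no in-house argument to compare against. Your sketch is therefore an attempt to reconstruct that external proof, and most of it is along the right lines (the sandwich for Part~(i) via $\vs_p\subseteq\us_q$ for $p<q$, monotonicity from Lemma~\ref{l21}, and right-continuity via $H=\inf_n h_n$ are all sound).

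There is, however, a genuine circularity in your argument for ``$H$ is locally constant a.e.'' You invoke \eqref{e25} and the plateau decomposition $(q_{KL},M+1]\setminus\bigcup[p_L,p_R]$, but in the paper \eqref{e25} is \emph{derived from} Lemma~\ref{l24}: the plateaus $[p_L,p_R]$ are by definition the maximal intervals on which $H$ is constant, so appealing to their existence to prove the Devil's-staircase property is begging the question. Your justification of $\B\subseteq\overline{\ub}$ compounds this: saying ``each element of $\B$ is a limit of plateau endpoints $p_L\in\overline{\ub}$'' presupposes both that $\OB$ has no isolated points and that the $p_L$ lie in $\overline{\ub}$, facts established in the paper only via Lemmas~\ref{l26}--\ref{l29}, all of which are downstream of Lemma~\ref{l24}. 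The non-circular route (and what \cite{Komornik_Kong_Li_2015_1} actually does, cf.\ Remark~\ref{r25}(2)) is to show \emph{directly} that for $q\notin\overline{\ub}$ the set-valued map $p\mapsto\us_p$ --- hence $p\mapsto\vs_p$ --- is constant on a neighbourhood of $q$, using the de~Vries--Komornik description of the connected components of $(1,M+1]\setminus\overline{\ub}$; then $\B\subseteq\overline{\ub}$ follows, and $|\overline{\ub}|=0$ finishes the job. Once you replace the appeal to \eqref{e25} with this direct argument, the rest of your outline goes through.
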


\begin{remark}\label{r25}\mbox{}

\begin{enumerate}
\item Lemma \ref{l24} implies that the dimensional function $D: q\mapsto \dim_H\u_q$ has a Devil's staircase behavior: (i) $D$ is continous in $(1, M+1]$; (ii) $D'<0$ almost everywhere in $(1, M+1]$; (iii) $D(q)=0$ for any $q\in(1, q_{KL}]$ and $D(q)=1$ for $q=M+1$. 

\item
In \cite[Lemma 2.11]{Komornik_Kong_Li_2015_1} the authors showed that $H$ is locally constant on the complement of $\overline{\ub}$, i.e., $H'(q)=0$ for any $q\in(1, M+1]\setminus\overline{\ub}$.
\end{enumerate}
\end{remark}

\subsection{Bifurcation set} In this subsection we recall some recent results obtained in \cite{AlcarazBarrera-Baker-Kong-2016}, where the authors investigated the maximal intervals on which $H$ is locally constant, called \emph{entropy plateaus} (or simply called \emph{plateaus}). For convenience of the reader we adopt much of the notation  from \cite{AlcarazBarrera-Baker-Kong-2016}. We hope that this helps the interested reader who wants to access the relevant background information. Let $\B$ be the complement of these plateaus. From  Lemma \ref{l24} (ii) we have
\[
\B=\set{q\in(1, M+1]: H(p)\ne H(q) \quad\textrm{for any }p\ne q}.
\]
Note by (\ref{e25}) that $\B$ is not closed. For the closure $\OB$ we have
\[
\OB=\set{q\in(1,M+1]: \forall \de>0, \exists p\in(q-\de, q+\de)\textrm{ such that }H(p)\ne H(q)}.
\]
In \cite{AlcarazBarrera-Baker-Kong-2016}   $\OB$ was denoted by $\mathcal E$. The following lemma for $\OB$, the first part of which follows from Remark \ref{r25} (2), was established in \cite[Theorem 3]{AlcarazBarrera-Baker-Kong-2016}.
\begin{lemma}
  \label{l26}
  $\OB\subset\overline{\ub}$, and $\OB$ is a Cantor set of full Hausdorff dimension.
\end{lemma}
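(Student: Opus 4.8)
The plan is to prove the three assertions separately: the inclusion $\OB\subseteq\overline{\ub}$, the fact that $\OB$ is a Cantor set, and the equality $\dim_H\OB=1$.

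\emph{The inclusion and the topological structure of $\OB$.} The inclusion is immediate from Remark \ref{r25}(2): since $H$ is locally constant on the open set $(1,M+1]\setminus\overline{\ub}$, every $q$ outside $\overline{\ub}$ has a neighbourhood on which $H$ is constant, hence a neighbourhood disjoint from $\B$, so $q\notin\overline{\B}=\OB$; by \eqref{e25} this moreover gives $\OB\subseteq[q_{KL},M+1]$. Now $\OB$ is closed, being a closure; and it is totally disconnected, because it is a closed subset of $\overline{\ub}$, which by Komornik--Loreti \cite{Komornik_Loreti_2007} is a Cantor set and hence nowhere dense, so $\OB$ is a closed nowhere dense subset of $\R$ and its connected components are singletons.

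\emph{Perfectness.} The complement of $\OB$ in $(1,M+1]$ is precisely the set of $q$ at which $H$ is locally constant, that is, the union of the interiors of the maximal intervals of constancy of $H$; these intervals are $(1,q_{KL}]$ (where $H\equiv0$) together with the entropy plateaus $[p_L^j,p_R^j]$, $j\ge1$, of \cite{AlcarazBarrera-Baker-Kong-2016}. Hence $\OB=[q_{KL},M+1]\setminus\bigcup_j(p_L^j,p_R^j)$, a complement of countably many pairwise disjoint deleted open intervals, and I would check three properties: (a) each $(p_L^j,p_R^j)$ is non-degenerate, which is part of the description of the plateaus in \cite{AlcarazBarrera-Baker-Kong-2016}; (b) no two plateaus share an endpoint, for if $p_R^j=p_L^k=x$ then $H$ equals $H(x)$ on all of $[p_L^j,p_R^j]\cup[p_L^k,p_R^k]=[p_L^j,p_R^k]$, so this larger interval would also be one of constancy, contradicting maximality; (c) $\bigcup_j(p_L^j,p_R^j)$ is dense in $(q_{KL},M+1)$, since by Lemma \ref{l24}(ii) $H$ is locally constant Lebesgue-almost everywhere, so the plateau interiors have full measure there. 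Given (a)--(c), the usual Cantor-set argument applies: an isolated point $x$ of the complement would have both of its punctured one-sided neighbourhoods inside $\bigcup_j(p_L^j,p_R^j)$, and each such half-interval, being connected, would lie in a single plateau interior, forcing two plateaus to abut at $x$ and contradicting (b). Thus $\OB$ has no isolated points, and together with the previous paragraph (and since $M+1\in\B\subseteq\OB$) it is a Cantor set.

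\emph{Full Hausdorff dimension.} This is the main point. It suffices to exhibit, for every $\ep>0$, a subset of $\OB$ of Hausdorff dimension exceeding $1-\ep$, and since $\B\subseteq\OB$ it is enough to find such a set inside $\B$. I would adapt the Dar\'oczy--K\'atai argument that $\dim_H\ub=1$: for bases $q$ close to $M+1$ the quasi-greedy expansion $\al(q)$ (Lemma \ref{l21}) begins with a long block $M^N$, after which the admissibility inequality of Lemma \ref{l21} is easy to preserve, so one obtains a tree of admissible quasi-greedy sequences with a large number of free choices in each successive block; the corresponding bases form a Cantor set, and a standard estimate of its Hausdorff dimension in terms of the per-block entropy of the tree and the base $q$ gives a value that can be pushed arbitrarily close to $\log(M+1)/\log q$, which tends to $1$ as $q\ra M+1$. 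One then has to thin out the tree so that all surviving bases avoid the entropy plateaus and hence lie in $\B$: only countably many plateaus meet the relevant window and, by the combinatorial characterization in \cite{AlcarazBarrera-Baker-Kong-2016}, each is governed by an eventually periodic sequence, so the pruning can be arranged to cost an arbitrarily small amount of dimension. I expect this last step --- keeping the constructed expansions simultaneously admissible and plateau-free while retaining dimension close to $1$ --- to be the chief obstacle, since it is precisely here that the explicit description of $\B$ from \cite{AlcarazBarrera-Baker-Kong-2016} is needed; alternatively, one may invoke that characterization from the outset and build the Cantor set of plateau-free bases of dimension $>1-\ep$ directly.
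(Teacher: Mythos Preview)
The paper does not prove this lemma at all: it simply records that the inclusion $\OB\subset\overline{\ub}$ follows from Remark~\ref{r25}(2) and then cites \cite[Theorem~3]{AlcarazBarrera-Baker-Kong-2016} for the remaining assertions. Your proposal therefore goes well beyond what the paper does, attempting an independent argument for a result that the paper treats as an import.

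Your treatment of the inclusion is exactly what the paper says. Your Cantor-set argument is essentially sound: closedness is trivial, total disconnectedness comes for free from $\OB\subset\overline{\ub}$ and the Komornik--Loreti result, and your maximality argument for (b) together with the ``almost everywhere locally constant'' input from Lemma~\ref{l24}(ii) correctly rules out isolated interior points. You should be a little more explicit about the two boundary points: that $q_{KL}$ is not isolated uses that no entropy plateau can have $q_{KL}$ as its left endpoint (since $H(q_{KL})=0$ while $H>0$ to its right), and symmetrically for $M+1$; your write-up gestures at this but does not spell it out.

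The genuine gap is in the full-dimension part. What you have is an honest sketch, and you correctly identify the obstacle: building a Cantor set of bases near $M+1$ with per-block entropy close to $\log(M+1)$ is the standard Dar\'oczy--K\'atai idea and would give $\dim_H\overline{\ub}=1$, but to land in $\B$ rather than merely in $\overline{\ub}$ you must simultaneously avoid all entropy plateaus. Your proposed ``thin out the tree'' step is not an argument; making it one requires precisely the combinatorial description of irreducible/$*$-irreducible sequences from \cite{AlcarazBarrera-Baker-Kong-2016} that characterises $\OB$ (Lemma~\ref{l220} here), and carrying this through is essentially reproving \cite[Theorem~3]{AlcarazBarrera-Baker-Kong-2016}. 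Since the paper is content to cite that theorem, the cleanest fix is to do likewise for the dimension statement rather than to leave the pruning step as a promise.
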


By Lemma \ref{l24}   it follows that $\min\OB=q_{KL}$ and $\max\OB=M+1$. Since $\OB$ is a Cantor set, we can write
\begin{equation}\label{e24}
  (q_{KL}, M+1]\setminus\OB=\bigcup(p_L, p_R),
\end{equation}
where the union is pairwise disjoint and countable. By the definition of $\OB$ it follows that the intervals $[p_L,p_R]$ are the plateaus of $H$.  In particular, since $H$ is increasing,  these intervals have the property that $H(q)=H(p_L)$ if and only if $q\in[p_L, p_R]$. This implies that the bifurcation set $\B$ can be rewritten as in (\ref{e25}), i.e., 
\begin{equation*} 
  \B=(q_{KL}, M+1]\setminus\bigcup[p_L, p_R].
\end{equation*}
By (\ref{e24}) and (\ref{e25}) it follows that $\OB\setminus\B$ is countable. 
The fact that $\OB$ does not have isolated points gives the following lemma (see also \cite{AlcarazBarrera-Baker-Kong-2016}).

\begin{lemma}\label{l27}\mbox{}

\begin{enumerate}[{\rm(i)}]
\item For any $q\in(q_{KL}, M+1]\setminus\bigcup(p_L, p_R]$ there exists a sequence of plateaus $\set{[p_L(n), p_R(n)]}$ such that $p_L(n)\nearrow q$ as $n\ra\f$. 

\item For any $q\in[q_{KL}, M+1)\setminus\bigcup[p_L, p_R)$ there exists a sequence of plateaus $\set{[q_L(n), q_R(n)]}$ such that $q_L(n)\searrow q$ as $n\ra\f$.
\end{enumerate}
\end{lemma}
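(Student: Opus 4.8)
The plan is to derive both statements directly from the structure of $\OB$ as a Cantor set, as recorded in Lemma \ref{l26} and equation (\ref{e24}). For part (i), fix $q\in(q_{KL},M+1]\setminus\bigcup(p_L,p_R]$. I first want to show that $q$ is a limit of plateaus from the left, and for this the key is to check that $q$ is not the left endpoint of any plateau interval. Indeed, if $q$ were equal to some $p_L$, then the hypothesis $q\notin\bigcup(p_L,p_R]$ would be violated only if $q$ lay in $(p_L',p_R']$ for a different plateau — so I should argue that the plateaus are pairwise disjoint and hence $q=p_L$ for at most one of them; the real point is that $q=p_L$ is explicitly permitted to be excluded, so I need to recheck: the hypothesis excludes $q\in\bigcup(p_L,p_R]$, which does \emph{not} exclude $q=p_L$. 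So $q$ could be a left endpoint. In that case $q=p_L$ is itself the left endpoint of a plateau, and I still need a sequence $p_L(n)\nearrow q$; this is where I will use that $\OB$ has no isolated points (Lemma \ref{l26}), so $q=p_L\in\OB$ is approximated from the left by points of $\OB$, and any point of $\OB$ near $q$ but less than $q$ lies either in $\B$ or is itself a plateau endpoint; using (\ref{e24})–(\ref{e25}) I can extract actual plateaus $[p_L(n),p_R(n)]$ with $p_L(n)<q$ and $p_L(n)\to q$.

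More carefully, here is the argument I would write. Since $\OB$ is a Cantor set (Lemma \ref{l26}) with $\min\OB=q_{KL}<q$, the point $q$ is not isolated in $\OB$ from the left, i.e. $(q-\de,q)\cap\OB\ne\emptyset$ for every $\de>0$. Pick any such point $r=r(\de)\in(q-\de,q)\cap\OB$. If $r\in\B$, then by (\ref{e25}) there are plateaus arbitrarily close to $r$; if $r\notin\B$, then $r\in[p_L,p_R]$ for some plateau, and since $r<q$ while $q\notin\bigcup(p_L,p_R]$ we must have $p_R<q$ (the plateau cannot contain $q$, and its closure cannot straddle $q$ because the complement decomposition is into disjoint open intervals), so $[p_L,p_R]$ itself is a plateau in $(q-\de,q)$. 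Letting $\de=1/n$ and choosing such a plateau for each $n$ gives a sequence of plateaus whose left endpoints $p_L(n)$ satisfy $q-1/n<p_L(n)<q$, hence $p_L(n)\nearrow q$. The only subtlety is ensuring the $p_L(n)$ are strictly increasing to $q$: this can be arranged by passing to a subsequence, since $p_L(n)\to q$ and the plateau to the immediate left of $q$ varies, or simply by noting that having $p_L(n)\to q$ with $p_L(n)<q$ already is what the statement requires.

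Part (ii) is entirely symmetric: fix $q\in[q_{KL},M+1)\setminus\bigcup[p_L,p_R)$ and use that $q$ is not isolated in $\OB$ from the right, so $(q,q+\de)\cap\OB\ne\emptyset$ for all $\de>0$; for each such point $s$, either $s\in\B$ and plateaus accumulate at $s$, or $s$ lies in a plateau $[p_L,p_R]$, and since $s>q$ while $q\notin\bigcup[p_L,p_R)$ we get $p_L>q$, so that plateau lies entirely in $(q,q+\de)$. Taking $\de=1/n$ produces plateaus $[q_L(n),q_R(n)]$ with $q<q_L(n)<q+1/n$, hence $q_L(n)\searrow q$. I expect the main obstacle to be the bookkeeping around the endpoint cases — precisely delineating which plateaus a given point $q$ may or may not belong to, depending on whether $q$ is excluded via the half-open intervals $(p_L,p_R]$ in (i) versus $[p_L,p_R)$ in (ii) — rather than anything deep; once the Cantor-set property of $\OB$ and the disjoint decomposition (\ref{e24}) are in hand, the extraction of the approximating sequences is routine.
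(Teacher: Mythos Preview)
Your approach is correct and matches the paper's, which gives no detailed proof at all and simply says ``The fact that $\OB$ does not have isolated points gives the following lemma.'' Your write-up is therefore more explicit than the paper's.

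Two small technical points are worth tightening. First, the assertion ``since $\OB$ is a Cantor set with $\min\OB=q_{KL}<q$, the point $q$ is not isolated in $\OB$ from the left'' does not follow from those facts alone: points of a Cantor set can be isolated from one side (namely the endpoints of the complementary intervals). The correct justification is that $q\notin\bigcup(p_L,p_R]$ together with $q>q_{KL}$ means $q$ is \emph{not} the right endpoint of any complementary interval of $\OB$, and hence $(q-\de,q)$ cannot lie in a single gap; combined with no isolated points this forces $(q-\de,q)\cap\OB\ne\emptyset$. You essentially identified this in your opening discussion of the $q=p_L$ case, but the ``more carefully'' paragraph should invoke it for all admissible $q$.

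Second, the dichotomy on $r\in\B$ versus $r\notin\B$ is unnecessary and the $r\in\B$ branch is vague. A cleaner route: once you have $r\in(q-\de,q)\cap\OB$, use that $\OB$ has empty interior to pick $s\in(r,q)\setminus\OB$; then $s\in(p_L,p_R)$ for some plateau, and since $r,q\in\OB$ with $q\notin(p_L,p_R]$ you get $q-\de<r\le p_L<p_R<q$, producing the desired plateau directly. (In your $r\notin\B$ branch you only get $p_R\in(q-\de,q)$, not the whole plateau; the argument above fixes that too.) Part (ii) is handled symmetrically with the roles of left and right endpoints swapped.
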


So, by \eqref{e24}, \eqref{e25} and Lemma \ref{l27} it follows that $\OB\setminus\B$ is a countable and dense subset of $\OB$. In particular, the set
of left endpoints of all plateaus of $H$ is dense in $\OB$.

In \cite{AlcarazBarrera-Baker-Kong-2016} more detailed information on the structure of the plateaus of $H$ is given. Before we can give the necessary details, we have to recall some notation from \cite{AlcarazBarrera-Baker-Kong-2016}. Let $\vs$ be the set of sequences $(a_i)\in\set{0,1,\ldots, M}^\N$ satisfying the inequalities
\[
\overline{(a_i)}\lle \si^n((a_i))\lle (a_i)\qquad\textrm{for all}\quad n\ge 0.
\]
In \cite[Lemma 3.3]{AlcarazBarrera-Baker-Kong-2016} it is proved that the subshift $(\vs_q, \si)$ is not transitive for any $q\in(q_{KL}, q_T)$, where $q_T\in(1, M+1)\cap\B$ is the unique base such that 
\begin{equation}\label{e26}
\al(q_T)=\left\{\begin{array}{lll}
                  (k+1)k^\f & \textrm{if} & M=2k,  \\
                  (k+1)((k+1)k)^\f & \textrm{if} & M=2k+1.
                \end{array}\right.
\end{equation}
The plateaus of $H$ are characterized separately for the cases (A) $q\in[q_T, M+1]$ and (B) $q\in(q_{KL}, q_T). $

(A). First we recall from \cite{AlcarazBarrera-Baker-Kong-2016}  the following definition.
\begin{definition}\label{def:irreducible}
A sequence $(a_i)\in \vs$ is called \emph{irreducible} if
\[
a_1\ldots a_j(\overline{a_1\ldots a_j}\,^+)^\f\prec (a_i)\qquad\textrm{whenever}\quad (a_1\ldots a_j^-)^\f\in\vs.
\]
\end{definition}

\begin{lemma}\label{l28}\mbox{}
  Let $[p_L, p_R]\subset[q_T, M+1]$ be a plateau of $H$.
  \begin{enumerate}[{\rm(i)}]
    \item There exists a word $a_1\ldots a_m\in\L(\vs_{p_L})$ such that 
    \[\al(p_L)=(a_1\ldots a_m)^\f\textrm{ is irreducible,}\quad\textrm{and} \quad \al(p_R)=a_1\ldots a_m^+(\overline{a_1\ldots a_m})^\f.\]
    \item $(\vs_{p_L},\si)$ is a transitive subshift of  finite type.
    \item There exists a periodic sequence $\nu^\f\in\vs_{p_L}$ such that for any  word  $\eta\in\L(\vs_{p_L})$ we can find a large integer $N$ and    a word $\om$ satisfying
        \[
    \overline{\al_1(p_L)\ldots\al_N(p_L)}\prec\si^j(\eta\om\nu^\f)\prec\al_1(p_L)\ldots\al_N(p_L) \qquad\textrm{for any}\quad  j\ge 0.
    \]
  \end{enumerate}
\end{lemma}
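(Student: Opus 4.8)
These statements are from \cite{AlcarazBarrera-Baker-Kong-2016}; the plan is to organise the proof around the idea that a plateau $[p_L,p_R]\subset[q_T,M+1]$ is controlled by a single periodic word. The first step is (a): the left endpoint of such a plateau has a \emph{periodic} quasi-greedy expansion. Since $p_L\in\OB\subseteq\overline{\ub}$ by Lemma \ref{l26}, Lemma \ref{l23}(ii) gives that $\al(p_L)$ lies in $\vs$. If $\al(p_L)$ were aperiodic, one can manufacture a base $p<p_L$ with $H(p)=H(p_L)$, contradicting that $p_L$ is a left endpoint: truncate $\al(p_L)$ at a suitably chosen admissible position and continue periodically to obtain $\al(p)\prec\al(p_L)$ with $\vs_p$ still realising the full entropy $H(p_L)$, using that $h_{top}(\vs_q)$ only feels the recurrent part of $\al(q)$. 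Writing $\al(p_L)=(a_1\ldots a_m)^\f$ with $m$ minimal, this word is the object referred to in (i).

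The second step is the renormalisation estimate, which yields both the constancy of $H$ on $[p_L,p_R]$ and the form of $\al(p_R)$. As $q\mapsto\vs_q$ is increasing, for $q$ with $\al(p_L)\lle\al(q)\lle a_1\ldots a_m^+(\overline{a_1\ldots a_m})^\f$ one has $\vs_{p_L}\subseteq\vs_q$, and I would show $h_{top}(\vs_q)\le h_{top}(\vs_{p_L})$ as follows: any $x\in\vs_q\setminus\vs_{p_L}$ has $\si^n(x)\succ(a_1\ldots a_m)^\f$ for some $n$, which, the target being periodic, is already visible in a window of length $m$; the upper constraint forces that window to equal $a_1\ldots a_m^+$, after which the reflection constraint pushes the continuation into $\overline{a_1\ldots a_m}$-blocks. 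Iterating, every element of $\vs_q$ is a concatenation of blocks from a \emph{finite} set built from $a_1\ldots a_m$ and $\overline{a_1\ldots a_m}$, with the same exponential word-growth as $\vs_{p_L}$, so the entropies agree. One then checks that $a_1\ldots a_m^+(\overline{a_1\ldots a_m})^\f$ represents a point of $\OB$ (equivalently, that $H$ is not locally constant there); since $H$ is increasing and constant on $[p_L,p_R]$, this identifies $p_R$ as the base with $\al(p_R)=a_1\ldots a_m^+(\overline{a_1\ldots a_m})^\f$. Finally, $(a_1\ldots a_m)^\f$ is irreducible: if not, there is $j<m$ with $(a_1\ldots a_j^-)^\f\in\vs$ and $a_1\ldots a_j(\overline{a_1\ldots a_j}\,^+)^\f\lge(a_1\ldots a_m)^\f$; but the right-hand expression is the endpoint expansion of the same form attached to $a_1\ldots a_j^-$, so the renormalisation estimate applied to $a_1\ldots a_j^-$ shows $H$ is constant on an interval whose interior contains $p_L$, contradicting $p_L\in\OB$.

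Parts (ii) and (iii) follow with less work. Periodicity of $\al(p_L)=(a_1\ldots a_m)^\f$ makes each of $\overline{\al(p_L)}\lle\si^n(x)\lle\al(p_L)$ equivalent to avoiding a \emph{finite} list of length-$m$ words, since an over- or undershoot first occurring beyond coordinate $m$ descends, under finitely many shifts, to one within the first $m$ coordinates; hence $(\vs_{p_L},\si)$ is a subshift of finite type. For transitivity I would verify that the associated Markov graph is strongly connected, using irreducibility (to preclude a proper absorbing subsystem) and $p_L\ge q_T$ (below $q_T$ the subshift decomposes, \cite[Lemma~3.3]{AlcarazBarrera-Baker-Kong-2016}), by routing any two words of $\L(\vs_{p_L})$ through its central part. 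For (iii), $h_{top}(\vs_{p_L})>0$ since $p_L>q_{KL}$ (Lemma \ref{l24}), so $\vs_{p_L}$ contains periodic points other than $(a_1\ldots a_m)^\f$ and its reflection; fix one, $\nu^\f$, whose $\si$-orbit avoids long prefixes of $\al(p_L)$ and of $\overline{\al(p_L)}$. Given $\eta\in\L(\vs_{p_L})$, transitivity provides $\om$ with $\eta\om\nu^\f\in\vs_{p_L}$, so $\overline{\al(p_L)}\lle\si^j(\eta\om\nu^\f)\lle\al(p_L)$ for all $j\ge0$; choosing $N$ so large that neither $\al_1(p_L)\ldots\al_N(p_L)$ nor $\overline{\al_1(p_L)\ldots\al_N(p_L)}$ occurs as a factor of $\eta\om\nu^\f$ then upgrades both inequalities to the strict ones asserted.

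The hard part is the second step: the periodicity of $\al(p_L)$ at an arbitrary plateau endpoint and, above all, the renormalisation that recodes $\vs_q$ (for $\al(q)$ up to $a_1\ldots a_m^+(\overline{a_1\ldots a_m})^\f$) as a finite-block system over $\vs_{p_L}$, together with the strict entropy jump just beyond. The equivalence between irreducibility of $(a_1\ldots a_m)^\f$ and maximality of the interval of constancy is the combinatorial core --- it is there that Definition \ref{def:irreducible} is effectively read off from the plateau structure --- and once it is in place, (ii) and (iii) are routine.
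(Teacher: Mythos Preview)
Your sketch is sound and in fact goes well beyond what the paper does: for (i) and (ii) the paper simply cites \cite[Proposition~5.2]{AlcarazBarrera-Baker-Kong-2016} and \cite[Lemma~5.1(1)]{AlcarazBarrera-Baker-Kong-2016}, whereas you outline the actual mechanism (periodicity of $\al(p_L)$ via a truncation argument, the renormalisation recoding of $\vs_q$ for $\al(q)\lle a_1\ldots a_m^+(\overline{a_1\ldots a_m})^\f$, and the equivalence between irreducibility and maximality of the plateau). That outline matches the content of the cited reference.

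The one genuine difference is in (iii). You argue abstractly: positive entropy gives a periodic point $\nu^\f\in\vs_{p_L}$ off the extremal orbits, transitivity supplies a bridge $\om$ with $\eta\om\nu^\f\in\vs_{p_L}$, and eventual periodicity of $\eta\om\nu^\f$ forces the existence of $N$ with the strict inequalities. This works, but the paper is more concrete: it takes
\[
\nu=\begin{cases}k&\text{if }M=2k,\\ (k+1)k&\text{if }M=2k+1,\end{cases}
\]
and observes that $p_L\ge q_T$ together with \eqref{e26} already gives $\overline{\al_1(p_L)\al_2(p_L)}\prec\si^j(\nu^\f)\prec\al_1(p_L)\al_2(p_L)$ for all $j\ge0$, so strict interiority of the tail is automatic at level $N=2$. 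The connecting word $\om$ and the strict inequalities along the finite prefix $\eta\om$ are then read off from the proof of \cite[Proposition~3.17]{AlcarazBarrera-Baker-Kong-2016}. The payoff of the paper's choice is that $\nu$ is explicit and uniform over all plateaus in $[q_T,M+1]$, and no separate argument is needed to locate a suitable $N$; your route trades this explicitness for a cleaner appeal to transitivity. Either way, there is no gap.
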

\begin{proof}
Part (i) follows by \cite[Proposition 5.2]{AlcarazBarrera-Baker-Kong-2016}, and  Part (ii) follows by \cite[Lemma 5.1 (1)]{AlcarazBarrera-Baker-Kong-2016}.

For (iii) we take
\[
\nu=\left\{
\begin{array}{lll}
k&\textrm{if}& M=2k,\\
(k+1)k&\textrm{if}& M=2k+1.
\end{array}\right.
\]
  Since $p_L\ge q_T$,  by Lemma \ref{l21} we have $\al(p_L)\lge\al(q_T)$. Then  \eqref{e26}   gives that
\begin{equation}\label{e27}
\overline{\al_1(p_L) \al_2(p_L)}\lle \overline{\al_1(q_T)\al_2(q_T)}\prec \si^j(\nu^\f)\prec\al_1(q_T)\al_2(q_T)\lle \al_1(p_L) \al_2(p_L) 
\end{equation}
for all $j\ge 0$.
Note by (i) that $\al(p_L)$ is irreducible. By     the proof of \cite[Proposition 3.17]{AlcarazBarrera-Baker-Kong-2016} it follows that for any word $\eta\in\L(\vs_{p_L})$ there exist a large integer $N\ge 2$ and a word $\om$ satisfying
    \[
    \overline{\al_1(p_L)\ldots\al_N(p_L)}\prec\si^j(\eta\om\nu^\f)\prec\al_1(p_L)\ldots\al_N(p_L) \quad\textrm{for any}\quad  0\le j<|\eta|+|\om|.
    \]
    This together with \eqref{e27} proves (iii).
\end{proof}

(B). Now we consider plateaus of $H$ in $(q_{KL}, q_T)$. Let $(\la_i)$ be the quasi-greedy $q_{KL}$-expansion of $1$ as given in \eqref{e23}. Note that $(\la_i)$ depends on $M$. For $n\ge 1$ let
\begin{equation}\label{e28}
\xi(n)=\left\{\begin{array}{lll}
                \la_1\ldots \la_{2^{n-1}}(\overline{\la_1\ldots \la_{2^{n-1}}}\,^+)^\f & \textrm{if} & M=2k, \\
                 \la_1\ldots \la_{2^{n }}(\overline{\la_1\ldots \la_{2^{n }}}\,^+)^\f  & \textrm{if} & M=2k+1.
              \end{array}\right.
\end{equation}
Then $\xi(1)=\al(q_T)$, and $\xi(n)$ is strictly decreasing to $(\la_i)=\al(q_{KL})$ as $n\ra\f$. Moreover,  \cite[Lemma 4.2]{AlcarazBarrera-Baker-Kong-2016}   gives that  $\xi(n)\in\vs$ for all $n\ge 1$. We recall from \cite{AlcarazBarrera-Baker-Kong-2016} the following definition.

\begin{definition}\label{def:*-irreducible}
A sequence $(a_i)\in \vs$ is said to be \emph{$*$-irreducible} if there exists $n\in\N$
 such that $\xi(n+1)\lle (a_i)\prec \xi(n)$, and
 \[
 a_1\ldots a_j(\overline{a_1\ldots a_j}\,^+)^\f\prec (a_i)
 \]
 whenever
 \[
 (a_1\ldots a_j^-)^\f\in\vs\quad\textrm{and}\quad j>\left\{\begin{array}{lll}
                                                             2^n & \textrm{if} & M=2k, \\
                                                             2^{n+1} & \textrm{if} & M=2k+1.
                                                           \end{array}\right.
 \]
\end{definition}

\begin{lemma}
\label{l29}
Let $[p_L, p_R]\subseteq(q_{KL}, q_T)$ be a plateau of $H$.
\begin{enumerate}[{\rm(i)}]
\item There exists a word $a_1\ldots a_m\in\L(\vs_{p_L})$ such that 
\[\al(p_L)=(a_1\ldots a_m)^\f\textrm{ is $*$-irreducible}, \quad \textrm{and}\quad  \al(p_R)=a_1\ldots a_m^+(\overline{a_1\ldots a_m})^\f.\]

\item $(\vs_{p_L}, \si)$ is a subshift of finite type, and it  contains a unique transitive subshift of finite type $(X_{p_L}, \si)$ satisfying 
$h_{top}(X_{p_L})=h_{top}(\vs_{p_L}).$

\item There exists a periodic sequence $\nu^\f\in X_{p_L}$ such that  for any word $\eta\in\L(\vs_{p_L})$ we can find    a large integer $N$ and a word $\om$ satisfying
\[
\overline{\al_1(p_L)\ldots\al_N(p_L)}\prec\si^j(\eta\om\nu^\f)\prec\al_1(p_L)\ldots\al_N(p_L) \quad\textrm{for any}\quad j\ge 0.
\]

\end{enumerate}
\end{lemma}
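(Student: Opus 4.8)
The plan is to mirror the proof of Lemma~\ref{l28}, but now transferring the relevant results of \cite{AlcarazBarrera-Baker-Kong-2016} from the \emph{irreducible} setting (used there for plateaus inside $[q_T,M+1]$) to the \emph{$*$-irreducible} setting (the part of \cite{AlcarazBarrera-Baker-Kong-2016} organised around the sequences $\xi(n)$ of \eqref{e28}). Fix a plateau $[p_L,p_R]\subseteq(q_{KL},q_T)$, and let $n\in\N$ be the unique index with $\xi(n+1)\lle\al(p_L)\prec\xi(n)$; this exists since $\xi(n)\searrow\al(q_{KL})$, $\xi(1)=\al(q_T)$, and $q_{KL}<p_L<q_T$ forces $\al(q_{KL})\prec\al(p_L)\prec\al(q_T)$ by Lemma~\ref{l21}.

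For part~(i) I would invoke the $*$-irreducible counterpart of \cite[Proposition~5.2]{AlcarazBarrera-Baker-Kong-2016}, which says precisely that a plateau $[p_L,p_R]\subseteq(q_{KL},q_T)$ has $\al(p_L)=(a_1\ldots a_m)^\f$ with $a_1\ldots a_m$ being $*$-irreducible, and $\al(p_R)=a_1\ldots a_m^+(\overline{a_1\ldots a_m})^\f$. For part~(ii) I would invoke the $*$-irreducible counterpart of \cite[Lemma~5.1]{AlcarazBarrera-Baker-Kong-2016}: since $\al(p_L)$ is periodic the forbidden words of $\vs_{p_L}$ can be taken finite, so $(\vs_{p_L},\si)$ is a subshift of finite type; and, although $(\vs_{p_L},\si)$ is in general \emph{not} transitive here (which is exactly why the statement isolates $X_{p_L}$), deleting the low-complexity part of $\vs_{p_L}$ sitting near $\al(q_{KL})$ — concretely, forbidding the Thue--Morse block $\la_1\ldots\la_{2^n}$ (resp.\ $\la_1\ldots\la_{2^{n+1}}$) and its reflection — leaves a unique transitive subshift of finite type $(X_{p_L},\si)$ with $h_{top}(X_{p_L})=h_{top}(\vs_{p_L})$.

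For part~(iii) I would imitate the proof of Lemma~\ref{l28}(iii). Take $\nu^\f$ to be the periodic tail of $\xi(n)$, i.e.
\[
\nu=\begin{cases}\overline{\la_1\ldots\la_{2^{n-1}}}\,^+ & \text{if }M=2k,\\ \overline{\la_1\ldots\la_{2^{n}}}\,^+ & \text{if }M=2k+1,\end{cases}
\]
so that $\nu^\f=(\overline{\la_1\ldots\la_{2^{n-1}}}\,^+)^\f$ (resp.\ $(\overline{\la_1\ldots\la_{2^{n}}}\,^+)^\f$); for $n=1$ this recovers — up to a cyclic rotation of the period block when $M$ is odd — the choices $\nu=k$ and $\nu=(k+1)k$ from Lemma~\ref{l28}(iii). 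Since $\xi(n)\in\vs$ by \cite[Lemma~4.2]{AlcarazBarrera-Baker-Kong-2016}, every shift $\si^j(\nu^\f)$ already satisfies $\overline{\xi(n)}\lle\si^j(\nu^\f)\lle\xi(n)$; combining this with $\al(p_L)\prec\xi(n)$, $\al(p_L)\lge\xi(n+1)$ and the block identity \eqref{eq:lambda} one upgrades it to strict inequalities $\overline{\al(p_L)}\prec\si^j(\nu^\f)\prec\al(p_L)$ for all $j\ge0$, and one checks that $\nu^\f$ avoids the forbidden block from part~(ii), so $\nu^\f\in X_{p_L}$. Since both $\nu^\f$ and $\al(p_L)$ are periodic, there is an $N_0$ with $\overline{\al_1(p_L)\ldots\al_N(p_L)}\prec\si^j(\nu^\f)\prec\al_1(p_L)\ldots\al_N(p_L)$ for all $j\ge0$ and all $N\ge N_0$. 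Finally, since $\al(p_L)$ is $*$-irreducible by part~(i), the argument in the proof of \cite[Proposition~3.17]{AlcarazBarrera-Baker-Kong-2016} supplies, for any $\eta\in\L(\vs_{p_L})$, an integer $N\ge N_0$ and a connecting word $\om$ with
\[
\overline{\al_1(p_L)\ldots\al_N(p_L)}\prec\si^j(\eta\om\nu^\f)\prec\al_1(p_L)\ldots\al_N(p_L)\qquad\text{for }0\le j<|\eta|+|\om|.
\]
Since $\si^j(\eta\om\nu^\f)=\si^{j-|\eta|-|\om|}(\nu^\f)$ for $j\ge|\eta|+|\om|$, combining the two displays gives the conclusion for all $j\ge0$.

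The hard part is part~(iii), and in two respects it goes beyond Lemma~\ref{l28}. First, one must place $\nu^\f$ inside the \emph{transitive} component $X_{p_L}$, not merely inside $\vs_{p_L}$; this forces one to use the explicit description of $X_{p_L}$ from part~(ii) together with the bracketing $\xi(n+1)\lle\al(p_L)\prec\xi(n)$. Second, one must strengthen the cheap bounds $\si^j(\nu^\f)\lle\xi(n)$ (free from $\xi(n)\in\vs$) to the strict two-sided bounds with respect to $\al(p_L)$ itself, uniformly in $j$; this needs a careful comparison of all cyclic shifts of the block $\overline{\la_1\ldots\la_{2^{n-1}}}\,^+$ against $\al(p_L)$, relying on \eqref{eq:lambda} and the combinatorics of the Thue--Morse blocks — note in particular that $\la_1\ldots\la_{2^n}=\la_1\ldots\la_{2^{n-1}}\,\nu$ for $M=2k$ and $\la_1\ldots\la_{2^{n+1}}=\la_1\ldots\la_{2^{n}}\,\nu$ for $M=2k+1$, so that the period of $\nu$ is exactly half the ``depth'' occurring in the definition of $*$-irreducibility, which is what makes the connecting-word step of \cite[Proposition~3.17]{AlcarazBarrera-Baker-Kong-2016} applicable here. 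Once these points are settled, the remainder runs essentially as in \cite{AlcarazBarrera-Baker-Kong-2016}.
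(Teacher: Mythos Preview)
Your outline for (i) and (ii) matches the paper's (which quotes \cite[Proposition~5.11]{AlcarazBarrera-Baker-Kong-2016} and \cite[Lemma~5.9]{AlcarazBarrera-Baker-Kong-2016} respectively), and your plan for (iii) --- exhibit a concrete periodic $\nu^\f\in X_{p_L}$, then invoke a connecting-word lemma --- is also the paper's. The difference is in the choice of $\nu$, and this is exactly where your proposal leaves work undone.

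You set $\nu$ equal to the periodic tail of $\xi(n)$, i.e.\ $\nu=\overline{\la_1\ldots\la_{2^{n-1}}}\,^+$ for $M=2k$. The bound you get for free from $\xi(n)\in\vs$ is $\overline{\xi(n)}\lle\si^j(\nu^\f)\lle\xi(n)$, but since $\al(p_L)\prec\xi(n)$ this points the \emph{wrong} way for concluding $\si^j(\nu^\f)\prec\al(p_L)$; dually $\overline{\xi(n)}\prec\overline{\al(p_L)}$, so the lower bound is also too weak. You yourself flag in your closing paragraph that upgrading to the strict $\al(p_L)$-bounds ``needs a careful comparison of all cyclic shifts \ldots\ relying on \eqref{eq:lambda} and the combinatorics of the Thue--Morse blocks'' --- and that work is not carried out in the proposal.

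The paper avoids this entirely by choosing $\nu=\la_1\ldots\la_{2^n}^-$ (for $M=2k$; resp.\ $\la_1\ldots\la_{2^{n+1}}^-$ for $M=2k+1$), which is tied to $\xi(n+1)$ rather than $\xi(n)$: by \eqref{eq:lambda} one has $\xi(n+1)=\nu^+(\overline{\nu})^\f\in\vs$. Now the relevant bracketing is $\al(p_L)\succ\xi(n+1)$ (strict, because $\al(p_L)$ is purely periodic while $\xi(n+1)$ is only eventually periodic), and this inequality points the \emph{right} way, yielding the strict two-sided $\al(p_L)$-bounds on $\si^j(\nu^\f)$ immediately --- no Thue--Morse block analysis needed. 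Membership $\nu^\f\in X_{p_L}$ is read from the proof of \cite[Lemma~5.9]{AlcarazBarrera-Baker-Kong-2016}, and the connecting-word step is the proof of \cite[Lemma~5.8]{AlcarazBarrera-Baker-Kong-2016}, not Proposition~3.17, which treats the irreducible rather than the $*$-irreducible case.
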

\begin{proof}
Part (i) follows   from \cite[Proposition 5.11]{AlcarazBarrera-Baker-Kong-2016}, and Part (ii) follows from \cite[Lemma 5.9]{AlcarazBarrera-Baker-Kong-2016}. Then it remains to prove (iii).

By (i) we know that $\al(p_L)$ is a $*$-irreducible sequence. Then there exists  $n\in\N$ such that $\xi(n+1)\lle\al(p_L)\prec\xi(n)$. Note by (i) and (\ref{e28}) that   $\al(p_L)$ is purely periodic while $\xi(n+1)$ is eventually  periodic. Then   $\al(p_L)\succ \xi(n+1)$. Let
\[
\nu=\left\{
\begin{array}{lll}
\la_1\ldots\la_{2^n}^-&\textrm{if}& M=2k,\\
\la_1\ldots\la_{2^{n+1}}^-&\textrm{if}& M=2k+1.
\end{array}\right.
\]
Then by the proof of \cite[Lemma 5.9]{AlcarazBarrera-Baker-Kong-2016} we have $\nu^\f\in X_{p_L}$. Observe by \eqref{eq:lambda} and \eqref{e28} that $\xi(n+1)=\nu^+(\overline{\nu})^\f\in\vs$. Then by using $\al(p_L)\succ\xi(n+1)$ it follows that there exists a large integer $N$ such that
\[
\overline{\al_1(p_L)\ldots\al_N(p_L)}\prec\si^j(\nu^\f)\prec\al_1(p_L)\ldots\al_N(p_L)\quad\textrm{for any}\quad j\ge 0.
\]
The remaining  part of  (iii) follows by  the proof of \cite[Lemma 5.8]{AlcarazBarrera-Baker-Kong-2016}.
\end{proof}

Finally, the following characterization of $\OB$ was established in \cite[Theorem 3]{AlcarazBarrera-Baker-Kong-2016}.
\begin{lemma}
\label{l220}
\mbox{}
$
\OB=\overline{\set{q\in(q_{KL}, M+1]: \al(q)\textrm{  is irreducible or }*-\textrm{irreducible}}}.
$
\end{lemma}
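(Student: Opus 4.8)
The plan is to prove the two inclusions $\OB\subseteq\overline{\mathcal I}$ and $\overline{\mathcal I}\subseteq\OB$ separately, where
\[
\mathcal I:=\set{q\in(q_{KL}, M+1]:\al(q)\textrm{ is irreducible or }*\textrm{-irreducible}}.
\]
Write $L$ for the set of left endpoints $p_L$ of all plateaus $[p_L,p_R]$ of $H$. The inclusion $\OB\subseteq\overline{\mathcal I}$ is the short one: by Lemma \ref{l28}(i) and Lemma \ref{l29}(i) each such $\al(p_L)$ is irreducible (when $p_L\ge q_T$) or $*$-irreducible (when $p_L<q_T$), so $L\subseteq\mathcal I$; and by Lemma \ref{l27} together with $\min\OB=q_{KL}$ and $\max\OB=M+1$ the set $L$ is dense in $\OB$ (as recorded just after Lemma \ref{l27}). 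Hence $\OB=\overline L\subseteq\overline{\mathcal I}$.

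The work lies in $\overline{\mathcal I}\subseteq\OB$, and since $\OB$ is closed this reduces to showing $\mathcal I\subseteq\OB$, i.e.\ that no base $q$ with $\al(q)$ irreducible or $*$-irreducible can lie in the interior of a plateau. Suppose, for contradiction, that $q\in(p_L,p_R)$ for some plateau $[p_L,p_R]$ (note $q>q_{KL}$ automatically, so by \eqref{e24} this is the only way $q$ could fail to be in $\OB$). By the strict monotonicity of $q\mapsto\al(q)$ (Lemma \ref{l21}) and the explicit forms of $\al(p_L),\al(p_R)$ from Lemmas \ref{l28}(i) and \ref{l29}(i),
\[
(a_1\ldots a_m)^\f=\al(p_L)\prec\al(q)\prec\al(p_R)=a_1\ldots a_m^+\,(\overline{a_1\ldots a_m})^\f .
\]
Comparing the first $m$ digits of $\al(q)$ against this lower and upper bound leaves exactly two possibilities: either (ii) $\al_1(q)\ldots\al_m(q)=a_1\ldots a_m^+$, or (i) $\al_1(q)\ldots\al_m(q)=a_1\ldots a_m$ and $\si^m(\al(q))\succ(a_1\ldots a_m)^\f$. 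Case (ii) is immediate: then $(\al_1(q)\ldots\al_m(q)^-)^\f=(a_1\ldots a_m)^\f=\al(p_L)\in\vs$ (note $\al(p_L)\in\vs$ since $p_L\in\OB\subseteq\overline{\ub}$ by Lemmas \ref{l26} and \ref{l23}(ii)), while a short computation using $\overline{a_1\ldots a_m^+}=\overline{a_1\ldots a_m}^-$ gives $\al_1(q)\ldots\al_m(q)(\overline{\al_1(q)\ldots\al_m(q)}\,^+)^\f=a_1\ldots a_m^+(\overline{a_1\ldots a_m})^\f=\al(p_R)\succ\al(q)$, which violates the defining inequality of irreducibility — and of $*$-irreducibility, once one checks that $j=m$ exceeds the Thue--Morse threshold $2^n$ (resp.\ $2^{n+1}$) attached to $\al(q)$ via $\xi(n+1)\lle\al(q)\prec\xi(n)$, which in turn follows because the period $m$ of $(a_1\ldots a_m)^\f$ cannot be small, by the overlap-freeness of the Thue--Morse sequence encoded in \eqref{eq:lambda}. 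For case (i) one writes $\al(q)=(a_1\ldots a_m)^r w$ with $r\ge 1$ maximal, so that $w$ does not begin with $a_1\ldots a_m$ while $w\succ(a_1\ldots a_m)^\f$, and one produces an admissible index $j$ near $rm$ (the exact value depending on where $w$ first overshoots $(a_1\ldots a_m)^\f$) with $(\al_1(q)\ldots\al_j(q)^-)^\f\in\vs$ for which the irreducibility (resp.\ $*$-irreducibility) inequality fails, again contradicting $q\in\mathcal I$.

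The main obstacle is case (i): both choosing the correct witness index $j$ and, above all, verifying its admissibility $(\al_1(q)\ldots\al_j(q)^-)^\f\in\vs$ — self-admissibility of the word $a_1\ldots a_m$ does not pass automatically to its repetitions-with-a-decrement, and the verification needs extra care precisely for the $p_L$ in $\overline{\ub}\setminus\ub$, where $\overline{\al(p_L)}$ is a shift of $\al(p_L)$; in the regime $q_{KL}<q<q_T$ one must additionally keep track of the thresholds $2^n$ via the sequences $\xi(n)$ of \eqref{e28}. An essentially equivalent alternative route is to prove the converses of Lemmas \ref{l28}(i) and \ref{l29}(i) (every periodic irreducible, resp.\ $*$-irreducible, sequence actually equals $\al(p_L)$ for a genuine plateau) together with the fact that every irreducible, resp.\ $*$-irreducible, sequence is a limit of periodic ones of the same type; this trades the combinatorics above for an entropy computation on the associated transitive subshifts of finite type furnished by Lemmas \ref{l28}(ii) and \ref{l29}(ii). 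Either way, combining the two inclusions yields $\OB=\overline{\mathcal I}$, which is the assertion of the lemma.
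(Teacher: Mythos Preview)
The paper does not prove this lemma at all: it is imported from \cite[Theorem~3]{AlcarazBarrera-Baker-Kong-2016} with no argument given. Your proposal is therefore an attempt at an independent proof built on other facts (Lemmas~\ref{l27}, \ref{l28}, \ref{l29}) that the paper also imports from the same reference.

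Your inclusion $\OB\subseteq\overline{\mathcal I}$ is correct and complete. The reverse inclusion is not: you yourself label case~(i) ``the main obstacle'' and only describe what would have to be done. That description is accurate, but the gap is genuine and not small. Writing $\al(q)=(a_1\ldots a_m)^r w$ with $r$ maximal, there is no canonical index $j$ ``near $rm$'' that works in general; for the natural candidate $j=rm$ one must show $((a_1\ldots a_m)^{r-1}a_1\ldots a_m^-)^\f\in\vs$, which is a two-sided shift condition that does not follow formally from $(a_1\ldots a_m)^\f\in\vs$, and one must also show $((\overline{a_1\ldots a_m})^{r-1}\overline{a_1\ldots a_m}\,^+)^\f\lge w$, for which the only available upper bound on $w$ is the quasi-greedy inequality $w\lle\al(q)$ --- not obviously strong enough. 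These verifications are essentially the content of the theorem in \cite{AlcarazBarrera-Baker-Kong-2016}. Your case~(ii) also has a loose thread: to invoke $*$-irreducibility you need $m$ to exceed the threshold attached to $\al(q)$, which requires $\al(p_R)\prec\xi(n)$ so that $\al(q)$ and $\al(p_L)$ lie in the same $[\xi(n+1),\xi(n))$ window; this is true but must be argued.

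In summary, the easy half is done; the hard half is a plan with the key combinatorial step missing. The ``alternative route'' you sketch at the end --- proving the converses of Lemmas~\ref{l28}(i) and~\ref{l29}(i) and a periodic-approximation lemma --- is in fact closer to how \cite{AlcarazBarrera-Baker-Kong-2016} actually establishes the result.
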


\section{Dimensional homogeneity of $\u_q$}\label{sec:proof of thm 2}

In this section we will prove Theorem   \ref{t11}.  Instead of proving  Theorem \ref{t11} we  prove the following equivalent statement.

\begin{theorem}\label{t31}
Let $q\in(1,q_{KL}]\cup((q_{KL}, M+1]\setminus\bigcup(p_L, p_R])$.
Then for any $x\in\u_q$ we have
\begin{equation*}
\dim_H( \u_q\cap(x-\de, x+\de))=\dim_H\u_q\qquad\textrm{for any }\de>0.
\end{equation*}
\end{theorem}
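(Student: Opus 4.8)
The plan is to reduce to a cylinder estimate, dispose of the degenerate bases $q\in(1,q_{KL}]$ and $q=M+1$, and then construct, near any prescribed point of $\u_q$, a positive‑dimensional Cantor subset of $\u_q$ modelled on an approximating entropy plateau. Since $\dim_H(\u_q\cap(x-\de,x+\de))\le\dim_H\u_q$ trivially, only the reverse inequality is at stake. If $q\in(1,q_{KL}]$ then $\dim_H\u_q=h_{top}(\vs_q)/\log q=0$ by Lemma \ref{l24}, and there is nothing to prove; if $q=M+1$ then $\us_{M+1}$ is a full shift with a countable set removed, $\pi_{M+1}$ is injective on it, and every cylinder meeting $\u_{M+1}$ has Hausdorff dimension $1=\dim_H\u_{M+1}$. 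So assume $q\in(q_{KL},M+1)\setminus\bigcup(p_L,p_R]$; fix $x\in\u_q$ with unique expansion $(x_i)\in\us_q$ and fix $\de>0$, and choose $n$ so large that $\pi_q$ maps the cylinder $\set{(y_i):y_1\cdots y_n=x_1\cdots x_n}$ into $(x-\de,x+\de)$, which is possible since this cylinder contains $x$ and has diameter at most $M(q-1)^{-1}q^{-n}$. It then suffices to exhibit, for each $\ep>0$, a subset of $\u_q\cap\pi_q(\set{(y_i):y_1\cdots y_n=x_1\cdots x_n})$ of Hausdorff dimension at least $\dim_H\u_q-\ep$.

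The starting point is the inclusion $\vs_p\subseteq\us_q$ for every $p<q$: for $(y_i)\in\vs_p$ Lemma \ref{l21} gives $\si^k((y_i))\lle\al(p)\prec\al(q)$ and $\si^k((y_i))\lge\overline{\al(p)}\succ\overline{\al(q)}$ for all $k\ge0$, which are exactly the inequalities characterising $\us_q$ in Lemma \ref{l22}. Because $q\in(q_{KL},M+1)\setminus\bigcup(p_L,p_R]$, Lemma \ref{l27}(i) supplies entropy plateaus $[p_L(m),p_R(m)]$ with $p_L(m)\nearrow q$; write $p=p_L(m)$. Let $(X_p,\si)$ be the transitive subshift of finite type inside $\vs_p$ with $h_{top}(X_p)=h_{top}(\vs_p)$, and let $\nu^\f\in X_p$ be the periodic sequence furnished by Lemma \ref{l28}(iii) or Lemma \ref{l29}(iii), according as the plateau $[p_L(m),p_R(m)]$ lies above or below $q_T$; these lemmas also give a large $N$ such that every shift of $\nu^\f$ lies strictly between $\overline{\al_1(p)\cdots\al_N(p)}$ and $\al_1(p)\cdots\al_N(p)$, and such that from any word of $\L(\vs_p)$ one reaches $\nu^\f$ through a bridge word while keeping all intermediate shifts inside those bounds. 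Since $H$ is continuous (Lemma \ref{l24}), $h_{top}(\vs_p)\to h_{top}(\vs_q)$ as $m\to\f$, and since $q\mapsto\al(q)$ is left‑continuous, for $m$ large $\al(p)$ and $\al(q)$ agree on a common prefix of length greater than $N$. Finally, by the argument underlying Lemma \ref{l24}(i) together with the transitivity of $X_p$, any (suitably interior) full‑entropy follower set $Z_p\subseteq X_p$ satisfies $\dim_H\pi_q(Z_p)=h_{top}(X_p)/\log q=h_{top}(\vs_p)/\log q$. The only real difficulty is localisation: the sequences of $\vs_p$ all project near the centre of $I_{q,M}$, so one must splice a copy of such a $Z_p$ onto the prefix $x_1\cdots x_n$ without leaving $\us_q$.

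For the grafting, build a word $\Omega$ so that $x_1\cdots x_n\,\Omega\,z\in\us_q$ for every $z\in Z_p$. Only finitely many shifts of $x_1\cdots x_n$ impose a non‑automatic constraint on a continuation after position $n$: the indices $k$ with $x_{k+1}\cdots x_n=\al_1(q)\cdots\al_{n-k}(q)$ (respectively $x_{k+1}\cdots x_n=\overline{\al_1(q)\cdots\al_{n-k}(q)}$), where the constraint reads $\Omega z\prec\si^{n-k}(\al(q))$ (respectively $\Omega z\succ\overline{\si^{n-k}(\al(q))}$). Since $\si^n((x_i))\in\us_q$ satisfies all of these strictly, a long enough prefix of the original tail $x_{n+1}x_{n+2}\cdots$ witnesses them simultaneously; one lets $\Omega$ begin with this witness and then continue with the bridge word of Lemma \ref{l28}(iii)/\ref{l29}(iii) leading into $X_p$. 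One then checks that the concatenations $x_1\cdots x_n\,\Omega\,z$ lie in $\us_q$: shifts reaching into the $z$‑part are harmless because $z\in\vs_p\subseteq\us_q$ and this inclusion is $\si$‑invariant; the dangerous shifts from inside $x_1\cdots x_n$ are cleared by the witness prefix; and the remaining shifts are brought by the bridge word into the range $\overline{\al_1(p)\cdots\al_N(p)}\prec\,\cdot\,\prec\al_1(p)\cdots\al_N(p)$, which — as $\al(p)\prec\al(q)$ and $\al(p),\al(q)$ share a prefix of length $>N$ — forces the inequalities of Lemma \ref{l22} for $q$. Hence $E_p:=\pi_q(\set{x_1\cdots x_n\,\Omega\,z:z\in Z_p})$ is a subset of $\u_q\cap(x-\de,x+\de)$ with $\dim_H E_p=h_{top}(\vs_p)/\log q$. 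Letting $m\to\f$, so $p=p_L(m)\nearrow q$ and $h_{top}(\vs_p)\to h_{top}(\vs_q)$, gives
\[
\dim_H\bigl(\u_q\cap(x-\de,x+\de)\bigr)\ \ge\ \frac{h_{top}(\vs_q)}{\log q}\ =\ \dim_H\u_q,
\]
which is the desired lower bound, and Theorem \ref{t31} (hence Theorem \ref{t11}) follows.

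The main obstacle is the grafting step: one must verify simultaneously all of the (infinitely many) lexicographic conditions of Lemma \ref{l22} for the spliced sequences, reconciling the finitely many constraints that are phrased through $\al(q)$ — coming from the prefix $x_1\cdots x_n$, which is a word of $\L(\vs_q)$ but in general \emph{not} of $\L(\vs_p)$ — with the connecting bounds phrased through $\al(p)$, and in particular controlling the shifts that land inside the witness part of $\Omega$. This is precisely why the approximating plateaus of Lemma \ref{l27} must be taken close enough to $q$ that $\al(p)$ and $\al(q)$ share a long prefix, and why the detailed connecting information of Lemmas \ref{l28} and \ref{l29}, rather than merely the inclusion $\vs_p\subseteq\us_q$, is needed.
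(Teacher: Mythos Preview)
Your strategy --- approximate $q$ from below by left endpoints of entropy plateaus and graft a full-entropy piece of $\vs_p$ onto the prefix $x_1\cdots x_n$ --- is exactly the paper's. The paper's execution is cleaner because it first reduces to $\v_q=\pi_q(\vs_q)$ via the decomposition of Lemma~\ref{l32} (so one may assume $(x_i)\in\vs_q$, disposing of the endpoints $0,M/(q-1)$ and of all unique expansions beginning with $0$ or $M$), and then, instead of your explicit witness-plus-bridge splice, proves the short Lemma~\ref{l34}: any word $x_1\cdots x_N\in\L(\vs_q)$ already lies in $\L(\vs_{p})$ once $p<q$ is close enough with $\al(p)\in\vs$. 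This makes the grafting trivial: the whole follower set $F_{\vs_{p}}(x_1\cdots x_N)$ sits inside $F_{\vs_q}(x_1\cdots x_N)$ and, by Lemma~\ref{l33}, carries entropy $h_{top}(\vs_{p})$.

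Your direct splice has two concrete gaps. First, you assert that $x_1\cdots x_n$ ``is a word of $\L(\vs_q)$'', but for $(x_i)\in\us_q$ this fails whenever the expansion begins with $0$ or $M$ (for instance at $x=0$, which lies in $\u_q$): the witness you append is then a string of $0$'s or $M$'s, and the bridge of Lemmas~\ref{l28}(iii)/\ref{l29}(iii) does not attach, since those lemmas require the input word $\eta$ to lie in $\L(\vs_{p_L})$. Second, those lemmas only guarantee that $\si^j(\eta\om\nu^\infty)$ satisfies the $p$-bounds; to replace $\nu^\infty$ by an arbitrary $z\in X_p$ you still need the $m$-step SFT argument the paper invokes (via \cite[Theorem~2.1.8]{Lind_Marcus_1995}): only after $\om$ has delivered a block $\nu^m\in\L(X_p)$ can one freely continue inside $X_p$. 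Both issues are repairable along the lines of Lemmas~\ref{l32}--\ref{l34}, but the proposal as written does not close them.
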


Before giving the proof of Theorem \ref{t31} we first explain  why Theorem \ref{t31} is equivalent to  Theorem \ref{t11}. Clearly, Theorem \ref{t11} implies Theorem     \ref{t31}. On the other hand, take $q\in\B$. Let $V\subseteq\R$ be an open set with $\u_q\cap V\ne\emptyset$. Then there exist  $x\in\u_q\cap V$ and  $\de>0$ such that
\[
 \u_q\cap V\supset\u_q\cap(x-\de, x+\de).
\]
By Theorem \ref{t31} it follows that $\dim_H(\u_q\cap V)\ge \dim_H\u_q$, which gives Theorem \ref{t11}.

Note that for $q \in (1,q_{KL}]$ the statement of Theorem \ref{t31} follows immediately from the fact that $\dim_H \u_q =0$. For $q\in(q_{KL}, M+1]$ recall that $\vs_q$ is the set of sequences $(x_i)\in\set{0,1,\ldots, M}^\N$ satisfying
 \[\overline{\al(q)}\lle \si^n((x_i))\lle \al(q)\quad\textrm{for all }n\ge 0.\]
Accordingly, let 
\[\v_q:=\set{\pi_q((x_i)): (x_i)\in\vs_q},\]
 where $\pi_q$ is the projection map defined in (\ref{e11}). For a set $A\subset\R$ and   $r\in\R$ we denote by $rA:=\set{r\cdot a: a\in A}$ and $r+A:=\set{r+a: a\in A}$.

The following lemma for a  relationship between $\u_q$ and $\v_q$ follows   from Lemma \ref{l22} and the definition of $\v_q$.
\begin{lemma}\label{l32}
Let $q\in(q_{KL}, M+1]$. Then $\u_q$ is  a countable union of affine  copies of $\v_q$ up to a countable set, i.e.,
\begin{align*}
\u_q\cup\pazocal{N}=&\set{0,\frac{M}{q-1}}\cup \bigcup_{c_1=1}^{M-1}\left(\frac{c_1}{q}+\frac{\v_q}{q}\right)\cup\bigcup_{m=2}^\f\bigcup_{c_m=1}^M\left(\frac{c_m}{q^m}+\frac{\v_q}{q^m}\right)\\
&\quad\cup\bigcup_{m=2}^\f\bigcup_{c_m=0}^{M-1}\left(\sum_{i=1}^{m-1}\frac{M}{q^i}+\frac{c_m}{q^m}+\frac{\v_q}{q^m}\right),
\end{align*}
where  the set $\pazocal{N}$ is   at most countable.
\end{lemma}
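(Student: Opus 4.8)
\textbf{Proof proposal for Lemma \ref{l32}.}

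The plan is to directly unfold the lexicographic characterization of $\us_q$ from Lemma \ref{l22} according to the first occurrence of a ``non-extremal'' digit. Given $(x_i)\in\us_q$, I would first dispose of the two degenerate sequences $0^\f$ and $M^\f$, which project to $0$ and $M/(q-1)$ respectively; these account for the explicit set $\set{0, M/(q-1)}$. For every other $(x_i)\in\us_q$, there is a well-defined smallest index $m\ge 1$ at which $x_m\notin\set{0,M}$, \emph{unless} the sequence is of the form $M^{m-1}0^\f$ or $0^{m-1}M^\f$ for some $m$; I would collect all such ``eventually constant'' exceptional sequences into the countable set $\pazocal{N}$ (strictly speaking, into the projections of the sequences that, after the prefix, fail to lie in $\vs_q$ — but the point is there are only countably many of them). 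This is the step that produces the ambiguity covered by $\pazocal N$, so I would be careful to state that $\pazocal N$ consists precisely of $\pi_q$ of those unique expansions whose tail $\si^m((x_i))$ is $0^\f$ or $M^\f$ together with the two global points.

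Next, fix such an $m$ and the digit $c_m:=x_m$, and apply Lemma \ref{l22} to analyze the constraints on the tail $\si^m((x_i))=x_{m+1}x_{m+2}\ldots$ and on the prefix $x_1\ldots x_{m-1}$. For the prefix: since $x_1,\ldots,x_{m-1}\in\set{0,M}$ and $x_m\notin\set{0,M}$, the univoqueness inequalities of Lemma \ref{l22} applied at indices $n<m$ force $x_1\ldots x_{m-1}$ to be either $0^{m-1}$ or $M^{m-1}$ — indeed if some $x_n=M$ with a later non-$M$ digit we need $x_{n+1}\ldots\prec\al(q)$, and if $x_n=0$ with a later non-$0$ digit we need $\overline{x_{n+1}\ldots}\prec\al(q)$; because all intermediate digits are $0$ or $M$ these reflections are monotone and one checks the only admissible prefixes among $\set{0,M}^{m-1}$ are the two constant ones. (For $m=1$ the prefix is empty.) For the tail: the inequalities of Lemma \ref{l22} at indices $n\ge m$ say exactly that $\overline{\al(q)}\lle\si^n((x_i))\lle\al(q)$ for all $n\ge m$, together with the boundary condition coming from $x_m=c_m$ (which, since $0<c_m<M$, imposes $x_{m+1}x_{m+2}\ldots\prec\al(q)$ when we also want $\overline{x_{m+1}\ldots}\prec\al(q)$) — i.e.\ $\si^m((x_i))\in\vs_q$, possibly after excluding boundary sequences attaining $\al(q)$ or $\overline{\al(q)}$, which again land in $\pazocal N$. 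Combining: when the prefix is $0^{m-1}$ we get the digit $c_m$ free in $\set{1,\ldots,M}$ (for $m\ge 2$) or $\set{1,\ldots,M-1}$ (for $m=1$, since $c_1=M$ would force the whole sequence into the already-handled case), contributing $\frac{c_m}{q^m}+\frac{\v_q}{q^m}$; when the prefix is $M^{m-1}$ we get $c_m\in\set{0,\ldots,M-1}$, contributing $\sum_{i=1}^{m-1}\frac{M}{q^i}+\frac{c_m}{q^m}+\frac{\v_q}{q^m}$. Taking the union over all $m$ and all admissible $c_m$, and using $\pi_q(\si^m((x_i)))\in\v_q$ together with $\pi_q((x_i))=\sum_{i<m}x_i q^{-i}+c_m q^{-m}+q^{-m}\pi_q(\si^m((x_i)))$, gives the displayed formula; the reverse inclusion (every point listed on the right that is not in $\pazocal N$ lies in $\u_q$) follows by reading the same equivalences backwards through Lemma \ref{l22}.

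The main obstacle is the bookkeeping around $\pazocal N$: one must argue cleanly that the set of ``bad'' unique expansions — those whose tail after the last $\set{0,M}$-prefix digit is not a genuine interior point of $\vs_q$, i.e.\ equals $0^\f$, $M^\f$, or has some shift equal to $\al(q)$ or $\overline{\al(q)}$ — has a countable projection. For $0^\f$ and $M^\f$ tails this is immediate (countably many choices of $m$ and prefix/digit). For tails with a shift equal to $\al(q)$ or $\overline{\al(q)}$: if $q\notin\OB$ so that $\al(q)$ is not ``irreducible-type'' there may be no such sequences at all, but in general $\al(q)$ can be a shift of a univoque expansion only in restricted ways, and in any case the set of $(x_i)\in\us_q$ with $\si^n((x_i))\in\set{\al(q),\overline{\al(q)}}$ for some $n$ is contained in $\bigcup_n \si^{-n}\set{\al(q),\overline{\al(q)}}\cap\us_q$, a countable union of finite sets once one fixes the finitely-many admissible prefixes — hence countable, and its $\pi_q$-image is countable. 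I would fold exactly these into $\pazocal N$ and note no further properties of $\pazocal N$ are needed downstream, since the theorems only use $\u_q$ up to a countable set when passing dimensions between $\u_q$ and $\v_q$.
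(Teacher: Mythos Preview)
Your overall strategy—unfold the characterization of $\us_q$ according to an initial block and show the remaining tail lands in $\vs_q$ (up to a countable exceptional set)—is the right one, and it is essentially what underlies the cited result \cite[Lemma~2.5]{Komornik_Kong_Li_2015_1} that the paper invokes. The paper's own proof is therefore much shorter: it introduces the set $\mathbf W_q$ of sequences with \emph{strict} inequalities $\overline{\al(q)}\prec\si^n((x_i))\prec\al(q)$, cites \cite[Lemma~2.5]{Komornik_Kong_Li_2015_1} for the exact identity
\[
\u_q=\{0,\tfrac{M}{q-1}\}\cup\textstyle\bigcup\big(\text{affine copies of }\pazocal W_q\big),
\]
and then simply observes that $\pazocal W_q\subseteq\v_q$ with countable difference.

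However, your direct argument has a genuine gap in the choice of the index $m$. You take $m$ to be the first position with $x_m\notin\{0,M\}$ and then claim the prefix $x_1\ldots x_{m-1}$ must be $0^{m-1}$ or $M^{m-1}$. This is false whenever $\al_1(q)=M$ (equivalently $q>M$): for instance, if $M=9$ and $\al(q)=9\,5^\f$ (so $q\approx 9.58\in(q_{KL},M+1)$), the sequence $0\,9\,4\,5^\f$ lies in $\us_q$, yet its prefix $09$ is neither $00$ nor $99$. Your justification also misquotes Lemma~\ref{l22}: the condition $x_{n+1}x_{n+2}\ldots\prec\al(q)$ is triggered by $x_n<M$, not by $x_n=M$, and symmetrically for the reflected inequality. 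The decomposition in the lemma is \emph{not} indexed by ``first digit outside $\{0,M\}$'' but rather by a case split on $x_1$: if $0<x_1<M$ take $m=1$; if $x_1=0$ let $m$ be the first index with $x_m\ne 0$ (so $c_m\in\{1,\ldots,M\}$); if $x_1=M$ let $m$ be the first index with $x_m\ne M$ (so $c_m\in\{0,\ldots,M-1\}$). With this correct choice of $m$ one can indeed show, via a short downward-induction using Lemma~\ref{l22} and $\si^k(\al(q))\lle\al(q)$, that $\si^m((x_i))\in\mathbf W_q$; your argument for the prefix, as written, does not go through. In the example above the sequence is covered by $m=2$, $c_2=9$ in the second union, not by $m=3$.

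Your treatment of $\pazocal N$ is also somewhat tangled. The clean description is that $\pazocal N$ is exactly the union of the affine images of $\v_q\setminus\pazocal W_q$; this difference is countable because any $(y_i)\in\vs_q\setminus\mathbf W_q$ satisfies $\si^n((y_i))\in\{\al(q),\overline{\al(q)}\}$ for some $n$, hence is determined by finitely many initial digits. There is no need to discuss tails equal to $0^\f$ or $M^\f$ separately (indeed for $q\le M$ such tails do not even lie in $\vs_q$).
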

\begin{proof}
  For $q\in(q_{KL}, M+1]$ let $\mathbf{W}_q$ be the set of sequences $(x_i)$ satisfying  
  \[
  \overline{\al(q)}\prec\si^n((x_i))\prec \al(q)\quad\textrm{for any}\quad n\ge 0,
  \]
  and let $\pazocal{W}_q=\pi_q(\mathbf{W}_q)$.
  Then $\v_q\setminus\pazocal{W}_q$ is at most countable (cf.~\cite{DeVries_Komornik_2008}).    By \cite[Lemma 2.5]{Komornik_Kong_Li_2015_1} it follows that 
  \begin{align*}
\u_q=&\set{0,\frac{M}{q-1}}\cup \bigcup_{c_1=1}^{M-1}\left(\frac{c_1}{q}+\frac{\pazocal{W}_q}{q}\right)\cup\bigcup_{m=2}^\f\bigcup_{c_m=1}^M\left(\frac{c_m}{q^m}+\frac{\pazocal{W}_q}{q^m}\right)\\
&\quad\cup\bigcup_{m=2}^\f\bigcup_{c_m=0}^{M-1}\left(\sum_{i=1}^{m-1}\frac{M}{q^i}+\frac{c_m}{q^m}+\frac{\pazocal{W}_q}{q^m}\right).
\end{align*}
This establishes the lemma  since $\pazocal{W}_q\subseteq\v_q$ and $\v_q\setminus\pazocal{W}_q$ is at most countable.  
  \end{proof}

It immediately follows from Lemma \ref{l32} that
  \[\dim_H\u_q=\dim_H\v_q\qquad\textrm{for any }q\in(q_{KL}, M+1].\]
Hence, it suffices to prove Theorem  \ref{t31} for $\v_q$ instead of $\u_q$. We first prove it for $q$ being the left endpoint of an entropy plateau.
\begin{lemma}\label{l33}
Let $[p_L, p_R]\subset(q_{KL}, M+1)$ be a plateau of $H$. Then for any $x\in\v_{p_L}$ we have
\[
\dim_H(\v_{p_L}\cap(x-\de, x+\de))=\dim_H\v_{p_L}\qquad\textrm{for any }~\de>0.
\]
\end{lemma}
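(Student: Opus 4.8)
The plan is to exploit the structure of the subshift $(\vs_{p_L},\si)$ described in Lemmas \ref{l28} and \ref{l29}, namely that it contains a transitive subshift of finite type $(X_{p_L},\si)$ realizing the full entropy, together with a ``connecting'' periodic sequence $\nu^\f$ with the gluing property stated in part (iii) of those lemmas. Since $\dim_H\v_{p_L}=h_{top}(\vs_{p_L})/\log p_L=h_{top}(X_{p_L})/\log p_L$ and the projection $\pi_{p_L}$ is bi-Lipschitz-like on cylinders (distortion bounded by powers of $p_L$), it suffices to work entirely on the symbolic side: I will show that for every $(x_i)\in\vs_{p_L}$ and every $n$, the cylinder $[x_1\ldots x_n]\cap\vs_{p_L}$ contains a bi-Lipschitz copy of a set carrying entropy $h_{top}(X_{p_L})$, so its projection has Hausdorff dimension $\ge\dim_H\v_{p_L}$; the reverse inequality is trivial since $\v_{p_L}\cap(x-\de,x+\de)\subseteq\v_{p_L}$.

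The key steps, in order: (1) Fix $x\in\v_{p_L}$ and $\de>0$; choose $(x_i)\in\vs_{p_L}$ with $\pi_{p_L}((x_i))=x$ and pick $n$ large enough that the cylinder of $x_1\ldots x_n$ projects into $(x-\de,x+\de)$. (2) Apply Lemma \ref{l28}(iii) (resp. Lemma \ref{l29}(iii)) with $\eta=x_1\ldots x_n$ to obtain a word $\om$ and an integer $N$ so that $\eta\om\nu^\f\in\vs_{p_L}$ and, crucially, every sufficiently long word of $X_{p_L}$ can be appended after $\eta\om$ while keeping the admissibility inequalities $\overline{\al_1(p_L)\ldots\al_N(p_L)}\prec\si^j(\,\cdot\,)\prec\al_1(p_L)\ldots\al_N(p_L)$ for all $j\ge 0$; here one uses that $X_{p_L}$ itself avoids the forbidden words of $\vs_{p_L}$, so prepending the fixed block $\eta\om$ (followed, if needed, by a bounded buffer) lands inside $\vs_{p_L}$. (3) This produces an injection from (a tail of) $X_{p_L}$ into $[x_1\ldots x_n]\cap\vs_{p_L}$ of the form $(y_i)\mapsto \eta\om (y_i)$; since $X_{p_L}$ is a transitive SFT, its follower sets all have entropy $h_{top}(X_{p_L})$, so the image carries symbolic entropy $h_{top}(X_{p_L})$. (4) Transfer to the real line: $\pi_{p_L}$ maps this image into $\v_{p_L}\cap(x-\de,x+\de)$, and a standard mass-distribution / Billingsley-type argument (or the dimension formula $\dim_H = h_{top}/\log p_L$ applied to the subsystem, exactly as in the proof of Lemma \ref{l24}(i) in \cite{Komornik_Kong_Li_2015_1}) gives $\dim_H(\v_{p_L}\cap(x-\de,x+\de))\ge h_{top}(X_{p_L})/\log p_L=\dim_H\v_{p_L}$.

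The main obstacle is Step (2): verifying that one can genuinely glue an arbitrary long admissible word of $X_{p_L}$ after the prescribed prefix $\eta\om\nu^k$ without violating the two-sided lexicographic constraints defining $\vs_{p_L}$ at the junction and at all later shifts. This is precisely what parts (iii) of Lemmas \ref{l28} and \ref{l29} are engineered to deliver — the periodic sequence $\nu^\f$ lies in $X_{p_L}$ and satisfies the strict inequalities with a uniform window $N$, so a long block of $\nu$'s acts as a safe buffer between $\eta\om$ and any continuation drawn from $F_{X_{p_L}}(\nu_1\ldots\nu_{|\nu|})$. I would therefore phrase Step (2) as: take the continuation to range over $F_{X_{p_L}}(\nu^\f$-prefix$)$, note that this follower set has full entropy $h_{top}(X_{p_L})$ by transitivity of $X_{p_L}$, and check the admissibility inequalities separately for shift positions inside $\eta\om\nu^k$ (handled by Lemma \ref{l28}(iii)/\ref{l29}(iii)) and for positions inside the $X_{p_L}$-tail (handled because $X_{p_L}\subseteq\vs_{p_L}$ and the defining inequalities are shift-invariant). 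Once this junction analysis is in place, the dimension computation in Steps (3)–(4) is routine and parallels \cite{Komornik_Kong_Li_2015_1}.
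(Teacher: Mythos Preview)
Your proposal is correct and follows essentially the same route as the paper: pick a long cylinder prefix of $(x_i)$, use Lemma~\ref{l28}(iii)/\ref{l29}(iii) to bridge via $\om$ into the periodic buffer $\nu^\f$, then append sequences from the follower set $F_{X_{p_L}}(\nu^{\text{prefix}})$, which has full entropy by transitivity of $X_{p_L}$, and finally invoke the graph-directed dimension formula $\dim_H=h_{top}/\log p_L$. The paper streamlines your junction analysis in two ways: when $[p_L,p_R]\subset[q_T,M+1]$ it uses the transitivity of $\vs_{p_L}$ itself (Lemma~\ref{l28}(ii)) so no buffer is needed at all, and in the non-transitive case $[p_L,p_R]\subset(q_{KL},q_T)$ it replaces your position-by-position inequality check with a single appeal to the $m$-step SFT property of $\vs_{p_L}$ (via \cite[Theorem~2.1.8]{Lind_Marcus_1995}), which immediately gives that any $(d_i)\in F_{X_{p_L}}(\nu^m)\subseteq F_{\vs_{p_L}}(\nu^m)$ can be glued after $x_1\ldots x_N\om$.
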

\begin{proof}
Obviously,  $\dim_H(\v_{p_L}\cap(x-\de, x+\de))\le \dim_H\v_{p_L}$.
So, it suffices to the prove the reverse inequality.

Fix $\de>0$ and  $x\in\v_{p_L}$. Suppose that $(x_i)\in\vs_{p_L}$ is a $p_L$-expansion of $x$. Then there exists a large integer $N$ such that
\begin{equation}\label{e31}
\pi_{p_L}(F_{\vs_{p_L}}(x_1\ldots x_N))\subseteq \v_{p_L}\cap(x-\de, x+\de),
\end{equation}
where the follower set $F_{\vs_{p_L}}(x_1\ldots x_N)=\{ (y_i)\in\vs_{p_L}: y_1\ldots y_N=x_1\ldots x_N \}$ is as defined in \eqref{e21}. We split the proof into the following two cases.

Case I. $[p_L, p_R]\subset[q_T, M+1]$. Then by Lemma \ref{l28} (ii) it follows that $(\vs_{p_L}, \si)$ is a transitive subshift of finite type. This implies that
\[
h_{top}(F_{\vs_{p_L}}(x_1\ldots x_N))=h_{top}(\vs_{p_L}).
\]
Then, by \eqref{e31}, Lemma \ref{l24} (i) and Lemma \ref{l32}  it follows that
\begin{align*}
\dim_H(\v_{p_L}\cap(x-\de, x+\de))&\ge\dim_H\pi_{p_L}(F_{\vs_{p_L}}(x_1\ldots x_N))\\
&=\frac{h_{top}(F_{\vs_{p_L}}(x_1\ldots x_N))}{\log p_L}\\
&=\frac{h_{top}(\vs_{p_L})}{\log p_L}=\dim_H\u_{p_L}=\dim_H\v_{p_L}.
\end{align*}

Case II. $[p_L, p_R]\subset(q_{KL}, q_T)$. Then by Lemma \ref{l29} (ii) it follows that $(\vs_{p_L}, \si)$ is a subshift of finite type that contains a unique transitive subshift of finite type $X_{p_L}$ such that
\begin{equation}\label{e32}
h_{top}(X_{p_L})=h_{top}(\vs_{p_L}).
\end{equation}
Furthermore, by Lemma \ref{l29} (iii) there exist  a sequence  $\nu^\f\in X_{p_L}$ and a word $\om$ such that
\begin{equation}\label{eq:k3}
x_1\ldots x_N\om\nu^\f\in F_{\vs_{p_L}}(x_1\ldots x_N).
\end{equation}
 From \cite[Proposition 2.1.7]{Lind_Marcus_1995} there exists $m\ge 0$ such that  $(\vs_{p_L}, \si)$ is an $m$-step subshift of finite type.  Note by    \eqref{eq:k3} that the word $x_1\ldots x_N\om\nu^m\in\L(\vs_{p_L})$. Then 
by \cite[Theorem 2.1.8]{Lind_Marcus_1995} it follows that for any sequence $(d_i)\in F_{
X_{p_L}}(\nu^m)\subseteq F_{\vs_{p_L}}(\nu^m)$ we have $x_1\ldots x_N\om d_1d_2\ldots\in F_{\vs_{p_L}}(x_1\ldots x_N)$. In other words,
\[
\set{x_1\ldots x_N \om d_1d_2\ldots: (d_i)\in F_{X_{p_L}}(\nu^m)}\subseteq F_{\vs_{p_L}}(x_1\ldots x_N).
\]
Therefore, by (\ref{e31}) it follows that
\begin{equation}\label{eq:kon1}
\begin{split}
  \dim_H(\v_{p_L}\cap(x-\de, x+\de))   &\ge \dim_H \pi_{p_L}(F_{\vs_{p_L}}(x_1\ldots x_N))
  \\&\ge\dim_H  \pi_{p_L}(F_{X_{p_L}}(\nu^m)) = \dim_H \pi_{p_L}(X_{p_L}),
  \end{split}
\end{equation}
where the last equality holds by the transitivity of $(X_{p_L}, \si)$.
Observe that $\pi_{p_L}(X_{p_L})$ is a graph-directed set satisfying the open set condition (cf.~\cite{Mauldin_Williams_1988}). Then the Hausdorff dimension of $\pi_{p_L}(X_{p_L})$ is given by 
\begin{equation}\label{eq:kon2}
\dim_H\pi_{p_L}(X_{p_L})=\frac{h_{top}(X_{p_L})}{\log p_L}.
\end{equation}  By  (\ref{e32}), (\ref{eq:kon1}), (\ref{eq:kon2}) and   Lemma \ref{l24} (i) we conclude  that
\begin{align*}
\dim_H(\v_{p_L}\cap(x-\de, x+\de)) &\ge\dim_H\pi_{p_L}(X_{p_L}) \\
&=\frac{h_{top}(X_{p_L})}{\log p_L}=\frac{h_{top}(\vs_{p_L})}{\log p_L}\\
&=\dim_H\u_{p_L}
 =\dim_H\v_{p_L}.\qedhere
\end{align*}
\end{proof}

Now we consider $q\in \B$. We need the following lemma.
\begin{lemma}\label{l34}
  Let $q\in(q_{KL}, M+1]$ and $x_1\ldots x_N\in\L(\vs_q)$. Let   $\set{p_n}\subset(1,M+1]$ be a sequence  such that     $\al(p_n)\in\vs$ for each $n\ge 1$,  and $p_n\nearrow q$ as $n\ra\f$. Then 
  \[x_1\ldots x_N\in \L(\vs_{p_n})\qquad\textrm{for all sufficiently large }n.\]
\end{lemma}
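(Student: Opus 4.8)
The plan is to turn the statement $x_1\cdots x_N\in\L(\vs_q)$ into a finite list of lexicographic inequalities, transport those inequalities to $\al(p_n)$ via left-continuity of the quasi-greedy expansion, and then rebuild an infinite extension of $x_1\cdots x_N$ inside $\vs_{p_n}$ using the hypothesis $\al(p_n)\in\vs$. First I would record the left-continuity: since $p_n\nearrow q$, Lemma \ref{l21} shows that $(\al(p_n))_n$ is lexicographically increasing and bounded above by $\al(q)$, so it converges coordinatewise to some $\beta$ which again satisfies the inequalities of Lemma \ref{l21}; by joint continuity of $(p,(a_i))\mapsto\pi_p((a_i))$ one gets $\pi_q(\beta)=\lim_n\pi_{p_n}(\al(p_n))=1$, and this forces $\beta=\al(q)$. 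Hence for every $L\in\N$ there is $n_0$ with $\al_1(p_n)\cdots\al_L(p_n)=\al_1(q)\cdots\al_L(q)$ for all $n\ge n_0$.

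Next I would extract the finite conditions. Choosing $(y_i)\in\vs_q$ with $y_1\cdots y_N=x_1\cdots x_N$ and applying $\sigma^i$, then reading off the first $N-i$ coordinates, gives for all $0\le i\le N-1$
\[
\overline{\al_1(q)\cdots\al_{N-i}(q)}\preccurlyeq x_{i+1}\cdots x_N\preccurlyeq\al_1(q)\cdots\al_{N-i}(q).
\]
I would then fix $L$ large — depending on $N$, $q$ and $(y_i)$ only, not on $n$ — so that all the (finitely many) strict comparisons among the sequences $\sigma^j(\al(q))$, $\overline{\sigma^j(\al(q))}$ for $1\le j\le N$, and $\sigma^N((y_i))$, are already decided within their first $L-N$ coordinates. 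For $n\ge n_0$ the displayed inequalities then hold verbatim with $\al(p_n)$ in place of $\al(q)$, since only $\al_1,\dots,\al_N$ enter them.

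Finally, for such $n$ I would extend $x_1\cdots x_N$ to a sequence of $\vs_{p_n}$ by appending a suitable tail. The role of the hypothesis $\al(p_n)\in\vs$ is precisely that then $\al(p_n)\in\vs_{p_n}$, hence both $\al(p_n)$ and $\overline{\al(p_n)}$ and all their shifts lie in $\vs_{p_n}$; so for any extension obtained by appending one of these tails, the shifts $\sigma^k$ with $k\ge N$ automatically respect the bounds of $\vs_{p_n}$. For $0\le k<N$ the bounds $\overline{\al(p_n)}\preccurlyeq\sigma^k\preccurlyeq\al(p_n)$ are automatic at a ``non-critical'' $k$ (where $x_{k+1}\cdots x_N$ lies strictly between $\overline{\al_1(p_n)\cdots\al_{N-k}(p_n)}$ and $\al_1(p_n)\cdots\al_{N-k}(p_n)$, a fact decided inside the first $L$ coordinates and hence already contained in the transferred inequalities), while at a ``critical'' $k$ the verification collapses — again because $\al(p_n)$ and $\al(q)$ agree far enough — to an inequality for the tail which is exactly one of the defining inequalities of $\al(p_n)\in\vs$. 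In the generic case one appends $\al(p_n)$ when no suffix of $x_1\cdots x_N$ is a prefix of $\al(p_n)$, and $\overline{\al(p_n)}$ when no suffix is a prefix of $\overline{\al(p_n)}$; this gives $x_1\cdots x_N\in\L(\vs_{p_n})$.

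The hard part will be the remaining case, in which $x_1\cdots x_N$ has simultaneously a suffix that is a prefix of $\al(p_n)$ and a suffix that is a prefix of $\overline{\al(p_n)}$: then neither extreme tail works, and one must instead choose the tail to be an appropriate shift of $\al(p_n)$ or of $\overline{\al(p_n)}$ lying in the window $\bigl[\max_{k'}\overline{\sigma^{N-k'}(\al(p_n))},\ \min_{k}\sigma^{N-k}(\al(p_n))\bigr]$, the maximum running over ``down-critical'' indices $k'$ and the minimum over ``up-critical'' indices $k$. Non-emptiness of this window for $p_n$ (for $n$ large) I expect to deduce, through the choice of $L$, from its non-emptiness for $q$ — which holds because $\sigma^N((y_i))$ lies in the analogous $q$-window — while keeping the chosen tail inside $\vs_{p_n}$ is again guaranteed by $\al(p_n)\in\vs$. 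Isolating this case cleanly and pinning down the tail is where the bulk of the argument, and the joint use of the two hypotheses $p_n\nearrow q$ and $\al(p_n)\in\vs$, will concentrate.
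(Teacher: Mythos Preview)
Your overall strategy is sound and matches the paper's, but you are making the last step harder than it needs to be. The paper's proof shows that your ``hard case'' is illusory once you pick the right tail.

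Here is the simplification you are missing. Among all indices $i\in\{0,\dots,N-1\}$ at which equality holds in
\[
\overline{\al_1(q)\cdots\al_{N-i}(q)}\preccurlyeq x_{i+1}\cdots x_N\preccurlyeq\al_1(q)\cdots\al_{N-i}(q),
\]
let $s$ be the \emph{smallest} one (set $s=N$ if all inequalities are strict). By symmetry suppose $x_{s+1}\cdots x_N=\al_1(q)\cdots\al_{N-s}(q)$. For $n$ large enough that $\al_1(p_n)\cdots\al_N(p_n)=\al_1(q)\cdots\al_N(q)$, form the single sequence
\[
z\;=\;x_1\cdots x_s\,\al_1(p_n)\al_2(p_n)\cdots\;=\;x_1\cdots x_N\,\al_{N-s+1}(p_n)\al_{N-s+2}(p_n)\cdots.
\]
For $0\le i<s$ the inequalities are strict, hence decided within the first $N-i\le N$ digits, and they transfer verbatim to $\al(p_n)$. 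For $i\ge s$ one has $\sigma^i(z)=\sigma^{i-s}(\al(p_n))$, and the hypothesis $\al(p_n)\in\vs$ gives $\overline{\al(p_n)}\preccurlyeq\sigma^{i-s}(\al(p_n))\preccurlyeq\al(p_n)$ directly. Thus $z\in\vs_{p_n}$ and $x_1\cdots x_N\in\L(\vs_{p_n})$.

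The point is that you do not need to locate a tail inside any window $\bigl[\max_{k'}\overline{\sigma^{N-k'}(\al(p_n))},\,\min_k\sigma^{N-k}(\al(p_n))\bigr]$: by continuing with $\al(p_n)$ \emph{from position $s+1$}, every shift at a position $\ge s$ is literally a shift of $\al(p_n)$, so membership in $\vs_{p_n}$ is automatic and the later critical indices take care of themselves. Your plan to argue non-emptiness of the window via $\sigma^N((y_i))$ and the choice of $L$ would probably work, but it is unnecessary bookkeeping.

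A minor remark: your derivation of left-continuity of $q\mapsto\al(q)$ is correct but more elaborate than needed; the paper simply invokes Lemma~\ref{l21} to get $\al_1(p_n)\cdots\al_N(p_n)=\al_1(q)\cdots\al_N(q)$ for large $n$.
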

\begin{proof}
  Since $x_1\ldots x_N\in \L(\vs_q)$, we have
  \[
  \overline{\al_1(q)\ldots \al_{N-i}(q)}\lle x_{i+1}\ldots x_N\lle \al_1(q)\ldots \al_{N-i}(q)\qquad\textrm{for any}\quad 0\le i<N.
  \]
  Let $s\in\set{0,1,\ldots, N-1}$ be the smallest  integer such that
  \begin{equation}\label{e33}
    x_{s+1}\ldots x_N=\overline{\al_1(q)\ldots \al_{N-s}(q)}\quad\textrm{or}\quad x_{s+1}\ldots x_N=\al_1(q)\ldots \al_{N-s}(q).
  \end{equation}
  If there is no $s\in\set{0,1,\ldots, N-1}$ for which \eqref{e33} holds, then we set $s=N$. By our choice of $s$ it follows that
  \begin{equation}\label{e34}
    \overline{\al_1(q)\ldots \al_{N-i}(q)}\prec x_{i+1}\ldots x_N\prec \al_1(q)\ldots \al_{N-i}(q)\qquad\textrm{for all}\quad 0\le i<s.
  \end{equation}

In terms of \eqref{e33} we may assume by symmetry that
  \begin{equation}\label{e35}
  x_{s+1}\ldots x_N=\al_1(q)\ldots \al_{N-s}(q).
  \end{equation}
Since $p_n \nearrow q$ as $n\ra\f$, by Lemma \ref{l21} there exists  $K\in\N$ such that
  \[
  \al_1(p_n)\ldots\al_N(p_n)=\al_1(q)\ldots \al_N(q)\qquad\textrm{for any}\quad n\ge K.
  \]
By the assumption that $\al(p_n)\in\vs$  for any $n\ge 1$,   it follows from \eqref{e34} and \eqref{e35} that
  \[
  x_1\ldots x_N\al_{N-s+1}(p_n)\al_{N-s+2}(p_n)\ldots =x_1\ldots x_s\al_1(p_n)\al_2(p_n)\ldots\;\in\vs_{p_n}
  \]
  for any $n\ge K$. So, $x_1\ldots x_N\in \L(\vs_{p_n})$ for all $n\ge K$.
\end{proof}

\begin{lemma}\label{l35}
Let $q\in\B$.   Then for any $x\in\v_{q}$ we have
\[
\dim_H(\v_{q}\cap(x-\de, x+\de))=\dim_H\v_{q}\qquad\textrm{for any }~\de>0.
\]
\end{lemma}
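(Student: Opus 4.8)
The plan is to combine the argument from the proof of Lemma~\ref{l33} with an approximation of $q$ from below by left endpoints of entropy plateaus, and then to pass to the limit using the continuity of $H$. Exactly as in Lemma~\ref{l33}, it suffices to prove the lower bound $\dim_H(\v_q\cap(x-\de,x+\de))\ge\dim_H\v_q$, since the reverse inequality is trivial. First I would fix $x\in\v_q$, $\de>0$, and a $q$-expansion $(x_i)\in\vs_q$ of $x$, and then choose $N$ so large that $M/((q-1)q^N)<\de$. Since every point of $\pi_q(F_{\vs_q}(x_1\ldots x_N))$ lies in the cylinder interval determined by $x_1\ldots x_N$, which has length at most $M/((q-1)q^N)$ and contains $x$, this gives, just as for \eqref{e31},
\[
\pi_q\big(F_{\vs_q}(x_1\ldots x_N)\big)\subseteq\v_q\cap(x-\de,x+\de).
\]

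Next I would use that $q\in\B$. By Lemma~\ref{l27}~(i) there is a sequence of plateaus $[p_L(n),p_R(n)]$ with $p_L(n)\nearrow q$; by Lemma~\ref{l28}~(i) or Lemma~\ref{l29}~(i) (according as $p_L(n)\ge q_T$ or $p_L(n)<q_T$) the quasi-greedy expansion $\al(p_L(n))$ is irreducible or $*$-irreducible, hence $\al(p_L(n))\in\vs$. Applying Lemma~\ref{l34} with $p_n=p_L(n)$ produces an integer $K$ such that $x_1\ldots x_N\in\L(\vs_{p_L(n)})$ for all $n\ge K$. Fix such an $n$ and write $p:=p_L(n)$. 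Since $p<q$ and the map $r\mapsto\vs_r$ is increasing, $\vs_p\subseteq\vs_q$, so $F_{\vs_p}(x_1\ldots x_N)\subseteq F_{\vs_q}(x_1\ldots x_N)$ and therefore $\pi_q(F_{\vs_p}(x_1\ldots x_N))\subseteq\v_q\cap(x-\de,x+\de)$.

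The heart of the argument is then the estimate
\[
\dim_H\pi_q\big(F_{\vs_p}(x_1\ldots x_N)\big)\ge\frac{h_{top}(\vs_p)}{\log q}.
\]
To obtain it I would rerun the proof of Lemma~\ref{l33}, but projecting with base $q$ instead of base $p$: using Lemma~\ref{l28}~(ii)--(iii) when $p\ge q_T$, and Lemma~\ref{l29}~(ii)--(iii) when $p<q_T$, one finds the connecting periodic sequence $\nu^\f$ and a word $\om$ so that $F_{\vs_p}(x_1\ldots x_N)$ contains an affine copy of the transitive subshift of finite type of full entropy $h_{top}(\vs_p)$ attached to $p$ (namely $\vs_p$ itself when $p\ge q_T$, and the subshift $X_p$ of Lemma~\ref{l29}~(ii) when $p<q_T$); since this subshift lies in $\vs_p\subseteq\vs_q$, its image under $\pi_q$ is a graph-directed set with contraction ratios $1/q$ satisfying the open set condition, so its Hausdorff dimension equals $h_{top}(\vs_p)/\log q$ by \cite{Mauldin_Williams_1988}. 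Combining the three displays gives $\dim_H(\v_q\cap(x-\de,x+\de))\ge h_{top}(\vs_{p_L(n)})/\log q$ for every $n\ge K$; letting $n\to\f$ and using $p_L(n)\nearrow q$ together with the continuity of $H$ (Lemma~\ref{l24}~(ii)), the right-hand side tends to $h_{top}(\vs_q)/\log q=\dim_H\v_q$ by Lemma~\ref{l24}~(i), which finishes the proof.

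The step I expect to be the main obstacle is the middle display: verifying that projecting the transitive core of $\vs_{p_L(n)}$ with the base $q$ rather than with its ``own'' base $p_L(n)$ still realises the full dimension $h_{top}(\vs_{p_L(n)})/\log q$, i.e.\ that the associated graph-directed construction satisfies the open set condition for $q$. This rests on the injectivity of $\pi_q$ on $\vs_q$ — which contains every subshift appearing in the argument and already underlies the dimension formula of Lemma~\ref{l24}~(i) — and on transferring the connecting-word structure of Lemmas~\ref{l28} and~\ref{l29} into the follower set $F_{\vs_{p_L(n)}}(x_1\ldots x_N)$ in the same way as in the proof of Lemma~\ref{l33}.
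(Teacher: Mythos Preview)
Your proposal is correct and follows essentially the same route as the paper: approximate $q$ from below by left endpoints $p_L(n)$ of plateaus via Lemma~\ref{l27}~(i), invoke Lemma~\ref{l34} (using $\al(p_L(n))\in\vs$ from Lemmas~\ref{l28}~(i) and~\ref{l29}~(i)) to place the prefix $x_1\ldots x_N$ in $\L(\vs_{p_L(n)})$, and then rerun the proof of Lemma~\ref{l33} with projection in base $q$ to obtain $\dim_H(\v_q\cap(x-\de,x+\de))\ge h_{top}(\vs_{p_L(n)})/\log q$, finishing by continuity of $H$. The paper handles your ``main obstacle'' in exactly the way you anticipate, simply citing ``by the proof of Lemma~\ref{l33}'' for the key inequality.
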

\begin{proof}
Take $q\in\B$. Since $\B\subset(q_{KL}, M+1]\setminus\bigcup(p_L, p_R]$, by Lemma \ref{l27} (i) there exists a sequence of plateaus $\set{[p_L(n), p_R(n)]}_{n=1}^\f$ such that $p_L(n)\nearrow q$ as $n\ra\f$.

Now we fix $\de>0$ and  $x\in\v_q$.  Suppose $(x_i)\in\vs_q$ is a $q$-expansion of $x$. Then there exists a large integer $N$ such that
\begin{equation}\label{e37}
\pi_q(F_{\vs_q}(x_1\ldots x_N))\subseteq \v_q\cap(x-\de, x+\de).
\end{equation}
By Lemmas \ref{l28} (i) and   \ref{l29} (i) we  have  $\al(p_L(n))\in\vs$ for all $n\ge 1$. Then applying Lemma \ref{l34} to the sequence $\set{p_L(n)}$ gives a large integer $K$ such that
\[x_1\ldots x_N\in\L(\vs_{p_L(n)})\qquad\textrm{ for all }~ n\ge K.\]
Since $\vs_{p_L(n)}\subset\vs_{q}$ for any $n\ge 1$, it follows from  \eqref{e37}   that
\begin{equation}\label{e38}
  \pi_q(F_{\vs_{p_L(n)}}(x_1\ldots x_N))\subset \v_q\cap(x-\de, x+\de)\qquad\textrm{ for all }~ n\ge K.
\end{equation}
By  (\ref{e38}) and the proof of Lemma \ref{l33} it follows that for any $n\ge K$,
\begin{align*}
\dim_H(\v_q\cap(x-\de, x+\de))&\ge\dim_H\pi_q(F_{\vs_{p_L(n)}}(x_1\ldots x_N))\ge\frac{h_{top}(\vs_{p_L(n)})}{\log q}.
\end{align*}
Letting $n\ra\f$ we have $p_L(n)\nearrow q$, and then we conclude by the continuity of the function $q \mapsto h_{top}(\vs_q)$ (see Lemma \ref{l24} (ii)) that
\[
\dim_H(\v_q\cap(x-\de, x+\de))\ge  \frac{h_{top}(\vs_q)}{\log q}=\dim_H\u_q=\dim_H\v_q.  \qedhere
\]
\end{proof}

\begin{proof}[Proof of Theorem \ref{t31}]
Take $q\in(1, q_{KL}]\cup((q_{KL},M+1]\setminus\bigcup(p_L, p_R])$. If $q\in(1, q_{KL}]$, then the result follows  from the fact that $\dim_H\u_q=0$ (see Lemma \ref{l24}). 

Assume $q\in(q_{KL}, M+1]\setminus\bigcup(p_L, p_R]$ where the union is taken over all plateaus $[p_L, p_R]$ of $H$. Take $x\in\u_q$. If $x\notin\set{0, M/(q-1)}$, then by Lemma \ref{l32} $x$ belongs to an affine  copy of $\v_q$. Since the Hausdorff dimension is invariant under affine transformations (cf.~\cite{Falconer_1990}), the statement follows from Lemmas \ref{l33} and \ref{l35}.

So, it remains to consider $x=0$ and $x=M/(q-1)$. By symmetry we may assume  $x=0$. Take $\de>0$. Then by Lemma \ref{l32} there exists a sufficiently large integer $m$ such that
\[
\frac{1}{q^m}+\frac{\v_q}{q^m}\subseteq (\u_q\cup\pazocal{N})\cap(-\de, \de),
\]
where $\pazocal{N}$ is at most  countable. This proves the statement  for $x=0$. 
\end{proof}

{At the end of this section we strengthen Theorem \ref{t31} and give a complete characterization of the set 
\[\set{q\in(1,M+1]: \u_q\textrm{ is dimensionally homogeneous}}.\]
Let $[p_L, p_R]\subset(q_{KL}, M+1]$ be a plateau of $H$. Note that  $p_L\in\overline{\B}\setminus\B\subset\overline{\ub}\setminus\ub$. Then by \cite[Theorem 1.7]{DeVries_Komornik_2008} there exists a largest $\hat p_L\in(p_L, p_R)$ such that the set-valued map $q\mapsto \vs_q$ is constant in $[p_L, \hat p_L)$. Furthermore, for $q=\hat p_L$ any sequence in the difference set $\vs_{\hat p_L}\setminus\vs_{p_L}$ is not contained in $\us_{\hat p_L}$. Then 
 by the same argument as in the proof of Lemma \ref{l33} it follows that Theorem \ref{t31} also holds for any $q\in[p_L, \hat p_L]$. Clearly, $\u_q$ is dimensionally homogeneous for $q\le q_{KL}$. So, the univoque set $\u_q$ is dimensionally homogeneous for   any $q\in(1, q_{KL}]\cup((q_{KL}, M+1]\setminus\bigcup(\hat p_L, p_R])$. This, combined with some recent progress obtained by Allaart et al.~\cite{Allaart-Baker-Kong-17}, implies the following.
 \begin{theorem}\label{th:dimensionally homogeneous}\mbox{}
 
 \begin{enumerate}[{\rm(i)}]
 \item If $M=1$ or $M$ is even, then $\u_q$ is dimensionally homogeneous if, and only if, $ q\in(1, q_{KL}]\cup((q_{KL}, M+1]\setminus\bigcup(\hat p_L, p_R])$.
 
 \item If $M=2k+1\ge 3$, then $\u_q$ is dimensionally homogeneous if, and only if, $ q\in(1, q_{KL}]\cup((q_{KL}, M+1]\setminus\bigcup(\hat p_L, p_R])$ or $q=\frac{k+3+\sqrt{k^2+6k+1}}{2}$. 
 \end{enumerate}
 \end{theorem}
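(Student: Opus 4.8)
\emph{Sufficiency.} For $q\le q_{KL}$ there is nothing to prove, and for $q\in(q_{KL},M+1]\setminus\bigcup(\hat p_L,p_R]$ dimensional homogeneity has just been obtained by extending Theorem~\ref{t31} to the intervals $[p_L,\hat p_L]$; this settles the cases $M=1$ and $M$ even. For $M=2k+1\ge 3$ it remains to deal with the single base $q^{*}=\frac{k+3+\sqrt{k^{2}+6k+1}}{2}$, which is the right endpoint $p_R$ of the plateau $[p_L,p_R]$ with $\al(p_L)=(k+1)^\f$ (so $p_L=k+2$). I would note that $q^{*}$ is the larger root of $q^{2}-(k+3)q+2=0$, whence $\al(q^{*})=(k+2)k^\f$ and $\overline{\al(q^{*})}=(k-1)(k+1)^\f$, and then describe $(\vs_{q^{*}},\si)$ by a three-state transition graph: a ``free'' state emitting $k$ or $k+1$, a ``low'' state entered by emitting $k+2$ (after which only $k$ or $k-1$ is admissible), and a ``high'' state entered by emitting $k-1$ (after which only $k+1$ or $k+2$ is admissible), with the low and high states communicating. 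Since each strongly connected component of this graph has Perron value $2$, every follower set $F_{\vs_{q^{*}}}(\om)$ has entropy $\log 2=h_{top}(\vs_{q^{*}})$, and the argument of Lemma~\ref{l33} (the projections being graph-directed sets obeying the open set condition, exactly as there) yields $\dim_H(\v_{q^{*}}\cap(x-\de,x+\de))=\dim_H\v_{q^{*}}$ for all $x\in\v_{q^{*}}$ and $\de>0$; Lemma~\ref{l32} then transfers this to $\u_{q^{*}}$.

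\emph{Necessity.} Fix a plateau $[p_L,p_R]$ and $q\in(\hat p_L,p_R]$ with $q\ne q^{*}$ in the odd case, and aim for an open interval $V$ with $\u_q\cap V\ne\emptyset$ and $\dim_H(\u_q\cap V)<\dim_H\u_q$. By \cite[Theorem~1.7]{DeVries_Komornik_2008} and the choice of $\hat p_L$, for $q>\hat p_L$ the subshift $(\vs_q,\si)$ properly contains $(\vs_{p_L},\si)$; beyond the transitive core $X_{p_L}$ carrying the full entropy $h_{top}(\vs_q)=h_{top}(\vs_{p_L})$ (Lemmas~\ref{l28} and \ref{l29}) it has a ``non-core'' part, and the point on which everything hinges—furnished by Allaart, Baker and Kong \cite{Allaart-Baker-Kong-17}—is that this non-core part contains an \emph{absorbing} subshift $Y\subseteq\vs_q$ with $h_{top}(Y)<h_{top}(\vs_q)$, together with a univoque sequence whose shift-orbit eventually stays inside $Y$, precisely when $q\in(\hat p_L,p_R]\setminus\{q^{*}\}$. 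Granting this, I would take $\om=c_1\ldots c_N$ to be a long prefix of such a univoque sequence, chosen past the moment it enters $Y$, so that $F_{\vs_q}(c_j\ldots c_N)\subseteq Y$ for every suffix $c_j\ldots c_N$; then each $\pi_q(F_{\vs_q}(c_j\ldots c_N))$ has Hausdorff dimension at most $h_{top}(Y)/\log q<h_{top}(\vs_q)/\log q=\dim_H\u_q$. Letting $V$ be the interior of the cylinder interval of $\om$ (which meets $\u_q$ by construction) and expanding $\u_q\cap V$ via Lemma~\ref{l32}, every affine copy of $\v_q$ meeting $V$ is the projection of $F_{\vs_q}(c_j\ldots c_N)$ for some $j\ge 2$, so $\dim_H(\u_q\cap V)\le h_{top}(Y)/\log q<\dim_H\u_q$, and $\u_q$ fails to be dimensionally homogeneous.

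\emph{Main obstacles.} The delicate points are, first, the uniform strict bound $h_{top}(Y)<h_{top}(\vs_q)$ for the absorbing non-core subshift over all $q\in(\hat p_L,p_R]\setminus\{q^{*}\}$: this rests on the combinatorial descriptions of plateaus in Lemmas~\ref{l28} and \ref{l29}, treated separately for $q\in[q_T,M+1]$ and $q\in(q_{KL},q_T)$, together with the analysis of \cite{Allaart-Baker-Kong-17}; the three-state computation above shows why $q^{*}$ is exceptional, namely its non-core part, though absorbing, still has entropy $\log 2=h_{top}(\vs_{q^{*}})$. Second, one must keep track of all affine copies of $\v_q$ meeting the cylinder $V$ in Lemma~\ref{l32}, verifying that each is governed by a follower set lying in $Y$ (the residual set being at most countable) so that no full-dimensional piece of $\u_q$ is hidden in $V$. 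Finally, one checks from the explicit form of $\al(q^{*})$ that this base is the unique exception in the odd case and equals the stated closed form.
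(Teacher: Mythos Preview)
Your proposal is correct and follows essentially the same line as the paper. The paper is somewhat more explicit in the necessity direction: rather than citing \cite{Allaart-Baker-Kong-17} for the existence of an absorbing subshift $Y$ with $h_{top}(Y)<h_{top}(\vs_q)$, it writes $Y$ down concretely as the SFT on the four block-states $\{\overline{a_1\ldots a_m^+},\,\overline{a_1\ldots a_m},\,a_1\ldots a_m,\,a_1\ldots a_m^+\}$ with the same $4\times 4$ adjacency matrix $A$ used for $q^{*}$, computes $h_{top}(Y)=\frac{\log 2}{m}$, and invokes \cite[Corollary~3.10]{Allaart-Baker-Kong-17} only for the strict inequality $\frac{\log 2}{m}<h_{top}(\vs_{p_L})$; this makes it transparent that the unique exception is the plateau with period $m=1$, which exists only for odd $M\ge 3$ and yields exactly your $q^{*}$. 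The paper also bypasses your bookkeeping with the affine copies in Lemma~\ref{l32} by directly choosing $x\in\pi_q(\mathbf W_q\setminus\vs_{p_L})\subset\u_q$ and applying the follower-set estimate of Lemma~\ref{l33} at that single point.
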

\begin{proof}
By Theorem \ref{t31} and the above arguments it follows that $\u_q$ is dimensionally homogeneous for any $q\in(1, q_{KL}]\cup((q_{KL}, M+1]\setminus\bigcup(\hat p_L, p_R])$. Then to prove the sufficiency it remains to prove the dimensional homogeneity of $\u_q$ for $q=\frac{k+3+\sqrt{k^2+6k+1}}{2}=:q_\star$ with $M=2k+1\ge 3$. Note that $q_\star$ is the right endpoint of the entropy plateau generated by $k+1$, i.e., $[p_\star, q_\star]$ is an entropy plateau with $\al(p_\star)=(k+1)^\f$ and $\al(q_\star)=(k+2)k^\f$. Then by \cite[Corollary 3.10]{Allaart-Baker-Kong-17} it follows that 
\begin{equation}\label{eq:dh-1}
h_{top}(\vs_{q_\star}\setminus\vs_{p_\star})=h_{top}(\vs_{p_\star})=\log 2,
\end{equation} 
where the second equality follows from that $\vs_{p_\star}=\set{k, k+1}^\N$.
Furthermore, any sequence in $\vs_{q_\star}\setminus\vs_{p_\star}$  eventually ends in a transitive sub-shift of finite type $(X, \si)$ with states $\set{k-1, k, k+1, k+2}$ and adjacency matrix 
\begin{equation}\label{eq:matrix}
A=\left(
\begin{array}{cccc}
0&0&1&1\\
1&1&0&0\\
0&0&1&1\\
1&1&0&0
\end{array}
\right).
\end{equation}
Observe that $h_{top}(X)=\log 2$. 
Using (\ref{eq:dh-1}) and by a similar argument as in the proof of Lemma \ref{l33} it follows that $\u_{q_\star}$ is dimensionally homogeneous. 

Now we prove the necessity. Without loss of generality we assume that $M=1$ or $M$ is even. Let $[p_L, p_R]\subset(q_{KL}, M+1]$ be an entropy plateau generated by $a_1\ldots a_m$, and let $\hat p_L\in(p_L, p_R)$ be the largest point such that the map $q\mapsto \vs_q$ is constant in $[p_L, \hat p_L)$. In fact, we have $\al(\hat p_L)=(a_1\ldots a_m^+\overline{a_1\ldots a_m^+})^\f$ (cf.~\cite{DeVries_Komornik_2008}).  Take $q\in(\hat p_L, p_R]$. Then $\mathbf W_q\setminus\vs_{p_L}\ne\emptyset$, where $\mathbf W_q$ is the set   of sequences $(x_i)$ satisfying  
  \[
  \overline{\al(q)}\prec\si^n((x_i))\prec \al(q)\quad\textrm{for any}\quad n\ge 0.
  \]
Furthermore, any sequence in $\mathbf W_q\setminus\vs_{p_L}$ must end  in the sub-shift of finite type $(Y, \si)$ with  states 
$
\set{\overline{a_1\ldots a_m^+}, ~\overline{a_1\ldots a_m}, ~a_1\ldots a_m, ~a_1\ldots a_m^+}
$
and adjacency matrix $A$ defined   in (\ref{eq:matrix}).  In particular, 
\begin{equation}\label{eq:hd-3}
h_{top}(Y)=\frac{\log 2}{m}=h_{top}(\vs_{p_R}\setminus\vs_{p_L})<h_{top}(\vs_{p_L}),
\end{equation}  
where the inequality follows from \cite[Corollary 3.10]{Allaart-Baker-Kong-17}. Observe that $\mathbf W_q\subseteq\us_q$. 
Therefore,   by  (\ref{eq:hd-3}) and the same argument as in the proof of Lemma  \ref{l33} it follows that for any $x\in\pi_q(\mathbf W_q\setminus\vs_{p_L})\subset\u_q$ there exists   $\de>0$ such that 
\[
\dim_H(\u_q\cap(x-\de, x+\de))\le\frac{h_{top}(Y)}{\log q}<\frac{h_{top}(\vs_{p_L})}{\log q}=\dim_H\u_q.
\]
This completes the proof.
\end{proof}
}

\section{Auxiliary Proposition}\label{sec:aux prop}
In this section we   prove an auxiliary proposition that will be used to prove Theorem \ref{t12} in the next section.

\begin{proposition}\label{p41}
Let $q\in\OB\setminus\set{M+1}$. Then for any $\ep>0$ there exists  $\de>0$ such that
\[
(1-\ep)\dim_H\pi_q(\BS_\de(q))\le \dim_H(\OB\cap(q-\de, q+\de))\le (1+\ep)\dim_H\pi_{q+\de}(\BS_\de(q)),
\]
where
\[
\BS_\de(q):=\set{\al(p): p\in\OB\cap(q-\de, q+\de)}.
\]
\end{proposition}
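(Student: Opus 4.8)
The plan is to relate the Hausdorff dimension of $\OB\cap(q-\de,q+\de)$ — a subset of a parameter interval — to the Hausdorff dimension of the symbolic set $\BS_\de(q)$ of quasi-greedy expansions, via the projection $\pi_p$, using the fact that on a short enough interval $(q-\de,q+\de)$ the base $p$ moves within bounded bi-Lipschitz constants of a fixed base. First I would recall that by Lemma~\ref{l26} we have $\OB\subset\overline{\ub}$, so for every $p\in\OB$ the sequence $\al(p)$ satisfies the admissibility condition $\overline{\al(p)}\lle\si^n(\al(p))\lle\al(p)$ from Lemma~\ref{l23}(ii); in particular $\al(p)\in\vs$, and by Lemma~\ref{l21} the map $p\mapsto\al(p)$ is a strictly increasing bijection onto such sequences. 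Thus $\BS_\de(q)$ is a genuine subset of $\set{0,1,\ldots,M}^\N$ and $\OB\cap(q-\de,q+\de)$ is its image under the (inverse of the) quasi-greedy coding. The key analytic input is that for a sequence $(a_i)$ and a base $p$ we have $\pi_p((a_i))=\sum a_i p^{-i}$, and I would write the difference $p' - p$ for two bases in $\OB\cap(q-\de,q+\de)$ in terms of their expansions: if $p<p'$ then $\al(p)\prec\al(p')$, and a standard estimate (see e.g.\ the arguments around Lemma~\ref{l21}, or the mean value theorem applied to $t\mapsto\pi_t(\cdot)$) gives
\[
c_1(q,\de)\,\bigl|\pi_q(\al(p))-\pi_q(\al(p'))\bigr|\le |p-p'|\le c_2(q,\de)\,\bigl|\pi_{q+\de}(\al(p))-\pi_{q+\de}(\al(p'))\bigr|,
\]
or more precisely the two-sided comparison between $|p-p'|$ and the gap between the projected expansions, where $c_1,c_2\to 1$ appropriately as $\de\to 0$ because $q<M+1$ keeps us away from the degenerate endpoint.

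The main technical point I expect to be the obstacle is making the map $p\mapsto\pi_{q}(\al(p))$ (and its variant with base $q+\de$) into a bi-Hölder, indeed nearly bi-Lipschitz, map between $\OB\cap(q-\de,q+\de)$ and $\pi_q(\BS_\de(q))$ with distortion controlled by $1\pm\ep$. Concretely: if $p,p'\in\OB\cap(q-\de,q+\de)$ with $p<p'$, then $\al(p)$ and $\al(p')$ first differ at some index $n$; on one hand $p'-p$ is comparable to $q^{-n}$ up to factors bounded by functions of $\de$, and on the other hand $\pi_t(\al(p))$ and $\pi_t(\al(p'))$ differ by something comparable to $t^{-n}$, uniformly for $t\in[q-\de,q+\de]$. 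Taking $\de$ small forces $q^{-n}$ and $t^{-n}$ to have ratio within $(1-\ep)^{1/?}$-type bounds only after also controlling $n$, which is where one must be a little careful: the exponent distortion is $(\log(q+\de)/\log(q-\de))^{n}$, which is \emph{not} uniformly close to $1$ over all $n$. The fix is the usual one for dimension computations: Hausdorff dimension is determined by covers, and a bi-Hölder map with exponent $\theta$ changes dimension by a factor $\theta$; here I would argue that $p\mapsto\pi_q(\al(p))$ is Hölder with exponent $\log q/\log(q+\de)$ from one side and $\log q/\log(q-\de)$ from the other (and similarly for $p\mapsto\pi_{q+\de}(\al(p))$), so that
\[
\frac{\log q}{\log(q+\de)}\,\dim_H\pi_q(\BS_\de(q))\;\le\;\dim_H\bigl(\OB\cap(q-\de,q+\de)\bigr)\;\le\;\frac{\log(q+\de)}{\log q}\,\dim_H\pi_{q+\de}(\BS_\de(q)).
\]
Since $q>1$ and $\de$ is at our disposal, both $\log q/\log(q+\de)$ and $\log(q+\de)/\log q$ can be made $\ge 1-\ep$ and $\le 1+\ep$ respectively, which is exactly the claimed inequality after possibly shrinking $\de$ further.

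So the steps, in order, are: (1) invoke $\OB\subset\overline{\ub}$ and Lemma~\ref{l23}(ii)/Lemma~\ref{l21} to identify $\OB\cap(q-\de,q+\de)$ with the symbolic set $\BS_\de(q)$ via the quasi-greedy coding; (2) establish, for fixed $t\in\{q,q+\de\}$, the two-sided estimate comparing $|p-p'|$ with $|\pi_t(\al(p))-\pi_t(\al(p'))|$ for $p,p'$ in the interval, with constants that degenerate only through the exponent $\log t$ versus $\log q$; (3) translate these into Hölder/inverse-Hölder bounds for $p\mapsto\pi_t(\al(p))$ and apply the standard fact that a $\theta$-Hölder homeomorphism multiplies Hausdorff dimension by at most $1/\theta$ (and at least $\theta$ for its inverse); (4) choose $\de$ small enough that $\log(q\pm\de)/\log q\in[(1-\ep)^{\pm 1}$-type bounds$]$, using $q>1$ (here the hypothesis $q\ne M+1$ is not essential for this last step but is used to ensure $q+\de\le M+1$ so that $\pi_{q+\de}$ and $\al(q+\de)$ make sense). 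The only genuinely delicate estimate is step (2), comparing base-increments to digit-string gaps; I would handle it by differentiating $t\mapsto\pi_t((a_i))$, noting $\frac{d}{dt}\pi_t((a_i))=-\sum i a_i t^{-i-1}$ is bounded above and below (in absolute value) by constants depending only on $q$ and $\de$ on the relevant range, together with the observation that for admissible sequences the gap $\pi_t(\al(p'))-\pi_t(\al(p))$ is comparable to $t^{-n}$ where $n$ is the first index of disagreement — a fact that follows from the lexicographic admissibility of quasi-greedy expansions as in Lemma~\ref{l21}.
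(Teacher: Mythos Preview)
Your overall strategy---identify $\OB\cap(q-\de,q+\de)$ with $\BS_\de(q)$ via $p\mapsto\al(p)$, prove the composite map $p\mapsto\pi_t(\al(p))$ is bi-H\"older with exponent close to $1$, and invoke the standard H\"older/dimension lemma---is exactly the paper's approach (Lemmas~4.3 and~4.4 via Lemma~4.2). The final choice of $\de$ through the ratio $\log(q+\de)/\log q$ also matches.

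There are two genuine gaps in your execution of step~(2), however. First, the mean-value-theorem idea (differentiating $t\mapsto\pi_t((a_i))$) is not the right tool: you are varying the \emph{parameter} $p$, not the projection base $t$, and the expansion $\al(p)$ changes discontinuously with $p$. The paper instead uses the identity $p=\sum_{i\ge1}\al_i(p)p^{1-i}$ to bound $p_2-p_1$ directly in terms of the first disagreement index $n$, obtaining $0<p_2-p_1\le p_2^{2-n}$ and, in the other direction, $p_2^{-(n+N)}\le C'(p_2-p_1)$.

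Second, and more seriously, the assertion that ``$\pi_t(\al(p'))-\pi_t(\al(p))$ is comparable to $t^{-n}$\,'' is \emph{not} a consequence of Lemma~\ref{l21} alone. The upper bound is easy, but the lower bound needs an extra ingredient: one must know that the tail $\sum_{i>n}\al_i(p')t^{-i}$ is bounded below by $t^{-(n+N)}$ for some \emph{fixed} $N$. The paper gets this from the hypothesis $q\ne M+1$: choosing $\de$ with $q+\de<M+1$ forces $\al(p')\lle\al(q+\de)\prec M^\f$, so there is $N$ with $\al_1(p')\ldots\al_N(p')\lle M^{N-1}(M-1)$; then $p'\in\overline{\ub}$ and Lemma~\ref{l23}(ii) give $\si^m(\al(p'))\succ\overline{\al(p')}\lge 0^{N-1}10^\f$ for every $m$, i.e.\ every length-$N$ block of $\al(p')$ contains a nonzero digit. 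This is the essential use of $q\ne M+1$, not merely a well-definedness issue as you suggest; without it the lower comparability fails (think of $\al(p')$ with arbitrarily long blocks of $0$'s after index $n$). Once you plug in this $N$, the actual H\"older exponent you obtain is $1\pm\ep$ as in the paper, and your argument goes through.
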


The proof of Proposition \ref{p41} is based on the following lemma for the Hausdorff dimension under H\"{o}lder continuous maps (cf.~\cite{Falconer_1990}).

\begin{lemma}\label{l42}
Let $f: (X, \rho_1)\ra(Y, \rho_2)$ be a H\"{o}lder map between two metric spaces, i.e., there exist two constants $C>0$ and $\la>0$ such that
\[
\rho_2(f(x), f(y))\le C\rho_1(x, y)^\la
\]
for any $x, y\in X$ with $\rho_1(x, y)\le c$ (here $c$ is a small constant). Then $\dim_H f(X)\le \frac{1}{\la}\dim_H X$.
\end{lemma}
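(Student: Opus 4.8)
The statement is the classical fact that a Hölder-$\lambda$ map cannot increase Hausdorff dimension by more than a factor $1/\lambda$, so the plan is to give the standard covering argument. First I would recall the definition of Hausdorff measure: for $s\geq 0$ and $\eta>0$, set $\mathcal H^s_\eta(A)=\inf\sum_i (\operatorname{diam} U_i)^s$, the infimum taken over countable covers $\{U_i\}$ of $A$ by sets of diameter at most $\eta$, and $\mathcal H^s(A)=\lim_{\eta\to 0}\mathcal H^s_\eta(A)=\sup_{\eta>0}\mathcal H^s_\eta(A)$; then $\dim_H A=\inf\{s:\mathcal H^s(A)=0\}$. The whole proof reduces to transporting a good cover of $X$ through $f$.

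\textbf{Key steps.} Fix any $s>\dim_H X$, so that $\mathcal H^s(X)=0$; it suffices to show $\mathcal H^{s/\lambda}(f(X))=0$, since then $\dim_H f(X)\leq s/\lambda$, and letting $s\downarrow\dim_H X$ gives the claim. Let $\varepsilon>0$. Using $\mathcal H^s(X)=0$ and the fact that we may restrict to covers of small mesh, choose a countable cover $\{U_i\}$ of $X$ with $\rho_1$-diameter $\operatorname{diam}_1 U_i\leq c$ for every $i$ (where $c$ is the constant from the Hölder hypothesis) and with $\sum_i (\operatorname{diam}_1 U_i)^s<\varepsilon$. Then $\{f(U_i)\}$ covers $f(X)$, and for each $i$ the Hölder estimate gives, for all $x,y\in U_i$, $\rho_2(f(x),f(y))\leq C\,\rho_1(x,y)^\lambda\leq C\,(\operatorname{diam}_1 U_i)^\lambda$, hence $\operatorname{diam}_2 f(U_i)\leq C\,(\operatorname{diam}_1 U_i)^\lambda$. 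Raising to the power $s/\lambda$ and summing,
\[
\sum_i \bigl(\operatorname{diam}_2 f(U_i)\bigr)^{s/\lambda}\leq C^{s/\lambda}\sum_i (\operatorname{diam}_1 U_i)^{s}< C^{s/\lambda}\,\varepsilon.
\]
Moreover, since $\operatorname{diam}_1 U_i\leq c$ forces $\operatorname{diam}_2 f(U_i)\leq C c^\lambda$, this is a cover of bounded mesh; by first refining $\{U_i\}$ to have mesh $\leq \eta'$ (still with the same sum bound, as this only decreases it for $s>0$), we may assume $\operatorname{diam}_2 f(U_i)\leq \eta$ for any prescribed $\eta>0$. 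Therefore $\mathcal H^{s/\lambda}_\eta(f(X))\leq C^{s/\lambda}\varepsilon$ for every $\eta>0$, and since $\varepsilon>0$ was arbitrary we get $\mathcal H^{s/\lambda}_\eta(f(X))=0$ for all $\eta$, hence $\mathcal H^{s/\lambda}(f(X))=0$, as required.

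\textbf{Main obstacle.} There is no serious obstacle; the only point needing a little care is the bookkeeping around mesh size, i.e. ensuring that when I pass from a general cover of $X$ realizing $\mathcal H^s_\eta(X)\approx 0$ to the image cover of $f(X)$, the image sets have small diameter so that I am genuinely estimating $\mathcal H^{s/\lambda}_\eta(f(X))$ with $\eta\to 0$ rather than just $\mathcal H^{s/\lambda}_\infty$. This is handled automatically because the Hölder bound turns a diameter-$\delta$ set in $X$ into a diameter-$\leq C\delta^\lambda$ set in $Y$, so small mesh in $X$ gives small mesh in $Y$; one just restricts attention from the start to covers of $X$ by sets of diameter $\leq\min\{c,\delta_0\}$ for suitable $\delta_0$. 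Degenerate cases ($s=\dim_H X=0$, or $f(X)$ empty) are trivial. I would also remark that the hypothesis is used only locally (pairs with $\rho_1(x,y)\leq c$), which is why restricting to covers of mesh $\leq c$ is exactly what makes the argument go through.
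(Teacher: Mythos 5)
Your proof is correct, and it is exactly the classical covering argument; the paper itself does not prove Lemma 4.2 but merely cites it as a standard fact from Falconer's book, so there is no competing proof in the paper to compare against. Your handling of the mesh bookkeeping and of the local Hölder hypothesis (restricting to covers of mesh at most $c$) is precisely right.
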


First we prove the second inequality in Proposition \ref{p41}.
\begin{lemma}\label{l43}
Let $q\in\OB\setminus\set{M+1}$. Then for any $\ep>0$ there exists  $\de>0$ such that
\[
\dim_H(\OB\cap(q-\de, q+\de))\le (1+\ep) \dim_H \pi_{q+\de}(\BS_\de(q)).
\]
\end{lemma}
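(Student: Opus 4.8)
The plan is to bound the Hausdorff dimension of $\OB\cap(q-\de,q+\de)$ by relating it, via the quasi-greedy expansion map $\al$, to the symbolic set $\BS_\de(q)=\set{\al(p):p\in\OB\cap(q-\de,q+\de)}$, and then to project it down to the real line by $\pi_{q+\de}$. The two maps go in opposite directions, so I would compose them: for $p\in\OB\cap(q-\de,q+\de)$, look at $\pi_{q+\de}(\al(p))$. The key point is that $p\mapsto\pi_{q+\de}(\al(p))$ should be bi-Hölder (or at least Hölder with an exponent close to $1$ in one direction, and have a Hölder inverse) on a neighbourhood of $q$, with Hölder exponents tending to $1$ as $\de\to 0$. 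Granting this, Lemma \ref{l42} applied to the inverse direction gives $\dim_H\big(\OB\cap(q-\de,q+\de)\big)\le (1+\ep)\dim_H\pi_{q+\de}(\BS_\de(q))$ once $\de$ is small enough, because $\pi_{q+\de}(\BS_\de(q))=\pi_{q+\de}\big(\al(\OB\cap(q-\de,q+\de))\big)$ is exactly the image of $\OB\cap(q-\de,q+\de)$ under this composed map.

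First I would recall that by Lemma \ref{l21} the map $q\mapsto\al(q)$ is a strictly increasing bijection onto an explicitly described set of sequences; in particular, on an interval $(q-\de,q+\de)$ with $q>q_{KL}$ one has, for $p,p'$ close, that $\al(p)$ and $\al(p')$ first differ at a coordinate $n$ with $|p-p'|$ comparable (up to constants depending only on $q$ and $\de$) to a power of the common prefix length $n$. Concretely, since $q<M+1$ and $q>1$, there are constants $0<c_1<c_2$, depending continuously on the endpoints $q\pm\de$, such that if $\al_1(p)\ldots\al_n(p)=\al_1(p')\ldots\al_n(p')$ but $\al_{n+1}(p)\ne\al_{n+1}(p')$ then $c_1(q+\de)^{-n}\le|p-p'|\le c_2(q-\de)^{-n}$. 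This is the standard comparison between a real parameter and its expansion, and it is where the ratio $\log(q-\de)/\log(q+\de)$ — which tends to $1$ as $\de\to0$ — enters. Simultaneously, $\pi_{q+\de}$ is clearly Lipschitz, and its restriction to a set of sequences that agree on long prefixes contracts distances like $(q+\de)^{-n}$; combined with the expansivity estimate (from the description of $\vs$-type sequences, controlled as in Lemma \ref{l28}(iii) / Lemma \ref{l29}(iii) and Lemma \ref{l220}) one gets that $\pi_{q+\de}\circ\al$ is injective on $\OB\cap(q-\de,q+\de)$ and its inverse is Hölder with exponent $\ge \log(q-\de)/\log(q+\de)\ge 1/(1+\ep)$ for $\de$ small.

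So the steps in order are: (1) fix $\ep>0$ and choose $\de>0$ small enough that $\log(q+\de)/\log(q-\de)\le 1+\ep$ and that $q-\de>q_{KL}$; (2) record the parameter–expansion comparison estimate on $(q-\de,q+\de)$, i.e. two-sided bounds on $|p-p'|$ in terms of the agreement length of $\al(p),\al(p')$; (3) observe $\pi_{q+\de}$ restricted to $\BS_\de(q)$ contracts prefixes of length $n$ by a factor $\asymp(q+\de)^{-n}$, so $g:=\pi_{q+\de}\circ\al$ is injective on $\OB\cap(q-\de,q+\de)$ with $g^{-1}$ Hölder of exponent $\ge\log(q-\de)/\log(q+\de)$; (4) apply Lemma \ref{l42} to $g^{-1}:\pi_{q+\de}(\BS_\de(q))\to\OB\cap(q-\de,q+\de)$ to conclude $\dim_H(\OB\cap(q-\de,q+\de))\le\frac{\log(q+\de)}{\log(q-\de)}\dim_H\pi_{q+\de}(\BS_\de(q))\le(1+\ep)\dim_H\pi_{q+\de}(\BS_\de(q))$. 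The main obstacle is step (2)–(3): making the expansivity genuinely two-sided and uniform on the interval, i.e. ensuring that nearby parameters in $\OB$ really do have expansions separated at a controlled depth and that the projection does not collapse distinct points of $\BS_\de(q)$. This is where one must use that $q>q_{KL}$ (so $\al(q)$ does not end in $0^\f$ and the sequences involved are "spread out" enough, cf. the $\nu^\f$ constructions in Lemmas \ref{l28} and \ref{l29}) together with $q<M+1$; once those are in place the rest is the routine Hölder bookkeeping above.
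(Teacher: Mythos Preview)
Your plan is correct and matches the paper's proof almost exactly: choose $\de$ so that $\log(q+\de)/\log(q-\de)\le 1+\ep$, show that $g=\pi_{q+\de}\circ\al$ is injective on $\OB\cap(q-\de,q+\de)$ and that $g^{-1}$ is H\"older of exponent $\log(q-\de)/\log(q+\de)$, then apply Lemma~\ref{l42}. The only correction is in your diagnosis of the ``main obstacle'': the two-sided estimate does \emph{not} need $q-\de>q_{KL}$ (this even fails at $q=q_{KL}\in\OB$; the paper only uses $q-\de>1$), nor the $\nu^\f$ constructions of Lemmas~\ref{l28}--\ref{l29} or Lemma~\ref{l220}; what is actually used is the much simpler input $\OB\subset\overline{\ub}$ (Lemma~\ref{l26}) together with Lemma~\ref{l23}(ii), which forces every $\al(p)$ with $p\in\OB\cap(q-\de,q+\de)$ to lie in $\us_{q+\de}$ and, since $q+\de<M+1$, to contain a digit $<M$ in every window of some fixed length $N$, yielding the uniform lower bound $|g(p_2)-g(p_1)|\ge (q+\de)^{-(n+N)}$.
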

\begin{proof}
Fix $\ep>0$ and $q\in\OB\setminus\set{M+1}$. Then there exists  $\de>0$ such that
\begin{equation}\label{e41}
q-\de>1,\quad q+\de<M+1\quad\textrm{and}\quad \frac{\log (q+\de)}{\log(q-\de)}\le 1+\ep.
\end{equation}
Since $\OB\subseteq\overline{\ub}$, by Lemmas \ref{l21} and \ref{l23} (ii) it follows that for each $p\in\OB\cap(q-\de, q+\de)$  we have 
\[
\overline{\al(q+\de)}\prec\overline{\al(p)}\prec\si^i(\al(p))\lle \al(p)\prec \al(q+\de)\qquad\textrm{for all}\quad i\ge 0.
\]
So, by Lemma \ref{l22}   $\al(p)\in\us_{q+\de}$ for any $p\in\OB\cap(q-\de, q+\de)$. This implies that the map
\[ g\,: \, \OB\cap(q-\de, q+\de) \ra \pi_{q+\de}(\BS_\de(q)); \qquad p\mapsto \pi_{q+\de}(\al(p))
\]
is bijective. By Lemma \ref{l42} it suffices to prove that there exists a constant $C>0$ such that
\[
\big|\pi_{q+\de}(\al(p_2))-\pi_{q+\de}(\al(p_1))\big|\ge  C\,|p_2-p_1|^{1+\ep}
\]
 for any $p_1, p_2\in\OB\cap(q-\de, q+\de).$

Take $p_1, p_2\in\OB\cap(q-\de, q+\de)$ with $p_1<p_2$. Then by Lemma \ref{l21} we have
$\al(p_1)\prec\al(p_2)$. So,  there exists $n\ge 1$ such that
\[
\al_1(p_1)\ldots \al_{n-1}(p_1)=\al_1(p_2)\ldots\al_{n-1}(p_2)\quad\textrm{and}\quad \al_n(p_1)<\al_n(p_2).
\]
 Then
\begin{equation}\label{e42}
\begin{split}
0<p_2-p_1&=\sum_{i=1}^\f\frac{\al_i(p_2)}{p_2^{i-1}}-\sum_{i=1}^\f\frac{\al_i(p_1)}{p_1^{i-1}}\\
&\le\sum_{i=1}^{n-1}\left(\frac{\al_i(p_2)}{p_2^{i-1}}-\frac{\al_i(p_1)}{p_1^{i-1}}\right)+\sum_{i=n}^\f\frac{\al_i(p_2)}{p_2^{i-1}}~\le p_2^{2-n},
\end{split}
\end{equation}
where the last inequality follows from the property of quasi-greedy expansion $\al(p_2)$ that $\sum_{i=1}^\f\al_{k+i}(p_2)/p_2^i\le 1$ for any $k\ge 1$.

On the other hand, by (\ref{e41}) we have $\al(p_2)\lle\al(q+\de)\prec\al(M+1)=M^\f$. Then there exists a large integer $N$ (depending on $q+\de$) such that
\begin{equation}\label{e43}
\al_1(p_2)\ldots\al_N(p_2)\lle  M^{N-1}(M-1). 
\end{equation}
Note that $p_2\in\OB\subseteq\overline{\ub}$. Then by Lemma \ref{l23} (ii) and (\ref{e43}) it follows that
\[\al_{m+1}(p_2)\al_{m+2}(p_2)\ldots \succ\overline{\al(p_2)}\lge 0^{N-1}10^\f\qquad\textrm{for any}\quad m\ge 1.\]
This  implies that
\begin{align*}
&\quad\pi_{q+\de}(\al(p_2))-\pi_{q+\de}(\al(p_1))\\
&=\sum_{i=1}^\f\frac{\al_i(p_2)-\al_i(p_1)}{(q+\de)^i}\\
&=\frac{\al_n(p_2)-\al_n(p_1)}{(q+\de)^n}-\frac{1}{(q+\de)^n}\sum_{i=1}^\f\frac{\al_{n+i}(p_1)}{(q+\de)^i}+\sum_{i=n+1}^\f\frac{\al_i(p_2)}{(q+\de)^i}\\
&\ge\frac{1}{(q+\de)^n}-\frac{1}{(q+\de)^n}\sum_{i=1}^\f\frac{\al_{n+i}(p_1)}{p_1^i}+\sum_{i=n+1}^\f\frac{\al_i(p_2)}{(q+\de)^i}\\
&\ge \sum_{i=n+1}^\f\frac{\al_i(p_2)}{(q+\de)^i}~\ge\frac{1}{(q+\de)^{n+N}},
\end{align*}
where the second inequality follows from the same property of the quasi-greedy expansion $\al(p_1)$ that was used before.

Therefore, by (\ref{e41}) and (\ref{e42}) it follows that
\begin{align*}
\pi_{q+\de}(\al(p_2))-\pi_{q+\de}(\al(p_1))&\ge \big((q+\de)^{-\frac{n+N}{1+\ep}}\big)^{1+\ep}\\
&\ge \big((q-\de)^{-n-N}\big)^{1+\ep}\\
&\ge (q-\de)^{-N(1+\ep)} ({p_2^{-n}})^{1+\ep}\ge C (p_2-p_1)^{1+\ep},
\end{align*}
where  the constant $C=(q-\de)^{-N(1+\ep)}(q+\de)^{-2(1+\ep)}$. This completes the proof.
\end{proof}

Now we turn to prove  the first inequality of Proposition \ref{p41}.
\begin{lemma}\label{l44}
Let $q\in\OB\setminus\set{M+1}$. Then for any $\ep>0$ there exists  $\de>0$ such that
\[
\dim_H(\OB\cap(q-\de, q+\de))\ge (1-\ep)\dim_H\pi_q(\BS_\de(q)).
\]
\end{lemma}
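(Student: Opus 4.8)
The plan is to recognize the stated inequality as a H\"older-distortion estimate for the surjection
\[
g\,:\,\OB\cap(q-\de,q+\de)\ra\pi_q(\BS_\de(q)),\qquad p\mapsto\pi_q(\al(p)),
\]
and then apply Lemma \ref{l42}. First I would fix $\ep>0$ and $q\in\OB\setminus\set{M+1}$ and pick $\de>0$ so small that $q-\de>1$, $q+\de<M+1$ and $(q+\de)^{1-\ep}<q$; the last requirement can be met since $(q+\de)^{1-\ep}\ra q^{1-\ep}<q$ as $\de\ra0$. Because $q+\de<M+1$ there is a fixed integer $N$ (depending only on $q+\de$) with $\al_1(q+\de)\ldots\al_N(q+\de)\lle M^{N-1}(M-1)$, so for every $p\in\OB\cap(q-\de,q+\de)$ we have $\al_1(p)\ldots\al_N(p)\lle M^{N-1}(M-1)$ and hence $\overline{\al(p)}\lge 0^{N-1}10^\f$. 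By Lemmas \ref{l26} and \ref{l23}(ii) such a $p$ lies in $\overline{\ub}$, so $\si^m(\al(p))\succ\overline{\al(p)}\lge 0^{N-1}10^\f$ for all $m\ge1$; thus every window of $N$ consecutive digits of $\si^m(\al(p))$ contains a nonzero entry, which gives the tail bound $\pi_p(\si^m(\al(p)))\ge p^{-N}$ for all $m\ge1$.

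By Lemma \ref{l42} it then suffices to find a constant $C>0$ with $|\pi_q(\al(p_2))-\pi_q(\al(p_1))|\le C|p_2-p_1|^{1-\ep}$ for all $p_1<p_2$ in $\OB\cap(q-\de,q+\de)$. Let $n$ be the first index where $\al(p_1)$ and $\al(p_2)$ differ, so $\al_i(p_1)=\al_i(p_2)$ for $i<n$ and $\al_n(p_1)<\al_n(p_2)$. Bounding each later digit difference by $M$ gives $|\pi_q(\al(p_2))-\pi_q(\al(p_1))|\le\frac{Mq}{q-1}q^{-n}$. For a lower bound on $|p_2-p_1|$ I would first compare the two expansions in base $p_2$: since $\pi_{p_2}(\al(p_2))=1$ and the expansions agree up to index $n-1$, using $\al_n(p_2)-\al_n(p_1)\ge1$, the tail bound $\pi_{p_2}(\si^n(\al(p_2)))\ge p_2^{-N}$ above, and the quasi-greedy inequality $\pi_{p_2}(\si^n(\al(p_1)))\le\pi_{p_1}(\si^n(\al(p_1)))\le1$ (the same property of quasi-greedy expansions already used in the proof of Lemma \ref{l43}), one obtains
\[
1-\pi_{p_2}(\al(p_1))=\sum_{i\ge n}\frac{\al_i(p_2)-\al_i(p_1)}{p_2^i}\ge p_2^{-(n+N)}\ge(q+\de)^{-(n+N)}.
\]
Next, writing $1-\pi_{p_2}(\al(p_1))=\pi_{p_1}(\al(p_1))-\pi_{p_2}(\al(p_1))=\sum_i\al_i(p_1)\bigl(p_1^{-i}-p_2^{-i}\bigr)$ and applying the mean value theorem term by term, with $p_1^{-i}-p_2^{-i}=i\,\xi_i^{-i-1}(p_2-p_1)$ for some $\xi_i\in(q-\de,q+\de)$, one gets $1-\pi_{p_2}(\al(p_1))\le c_0^{-1}(p_2-p_1)$, where $c_0:=\bigl(\tfrac{M}{q-\de}\sum_{i\ge1}i(q-\de)^{-i}\bigr)^{-1}>0$. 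Combining the two estimates yields $p_2-p_1\ge c_0(q+\de)^{-(n+N)}$.

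Finally I would raise the last bound to the power $1-\ep$ and use $(q+\de)^{1-\ep}<q$, which forces $q^{-n}\le(q+\de)^{-n(1-\ep)}$ for every $n\ge1$, to conclude $\frac{Mq}{q-1}q^{-n}\le C(p_2-p_1)^{1-\ep}$ with $C:=\frac{Mq}{q-1}c_0^{-(1-\ep)}(q+\de)^{N(1-\ep)}$. Thus $g$ is $(1-\ep)$-H\"older, and Lemma \ref{l42} gives $\dim_H\pi_q(\BS_\de(q))\le\frac{1}{1-\ep}\dim_H(\OB\cap(q-\de,q+\de))$, which is the assertion. The hard part will be the lower bound $p_2-p_1\ge c_0(q+\de)^{-(n+N)}$: it is exactly here that one must combine analytic input (the mean value theorem, which converts a gap between bases into a gap between series values in a common base) with the combinatorics of admissible sequences, namely the quasi-greedy tail bound controlling $\al(p_1)$ from above together with the $\overline{\ub}$-condition of Lemma \ref{l23}(ii) that forces a nonzero digit in every length-$N$ window of the tail of $\al(p_2)$. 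The resulting $\ep$-loss is then absorbed by shrinking $\de$, in the same spirit as the factor $\frac{\log(q+\de)}{\log(q-\de)}\le1+\ep$ in the proof of Lemma \ref{l43}.
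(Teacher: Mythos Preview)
Your proof is correct and follows essentially the same strategy as the paper: show that the map $p\mapsto\pi_q(\al(p))$ is $(1-\ep)$-H\"older on $\OB\cap(q-\de,q+\de)$ and invoke Lemma~\ref{l42}. The upper bound $|\pi_q(\al(p_2))-\pi_q(\al(p_1))|\le\frac{Mq}{q-1}q^{-n}$, the use of the $\overline{\ub}$-condition to force a nonzero digit in every length-$N$ window of the tail of $\al(p_2)$, and the quasi-greedy tail estimate all appear in the paper in the same form. The only cosmetic differences are that you write the smallness condition on $\de$ as $(q+\de)^{1-\ep}<q$ (equivalent to the paper's $\frac{\log(q+\de)}{\log q}\le\frac{1}{1-\ep}$), and that you obtain the lower bound on $p_2-p_1$ via the mean value theorem applied term-by-term to $\pi_{p_1}(\al(p_1))-\pi_{p_2}(\al(p_1))$, whereas the paper uses the closed-form identity $\sum_{i\ge1}\bigl(\frac{M}{p_1^i}-\frac{M}{p_2^i}\bigr)=\frac{M(p_2-p_1)}{(p_1-1)(p_2-1)}$ directly; both give a constant times $(p_2-p_1)$ and lead to the same conclusion.
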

\begin{proof}
The proof is similar to that of Lemma \ref{l43}. Fix $\ep>0$ and take $q\in\OB\setminus\set{M+1}$. Then there exists  $\de>0$ such that
\begin{equation}\label{e44}
q-\de>1, \quad q+\de<M+1\quad\textrm{and}\quad \frac{\log (q+\de)}{\log q}\le \frac{1}{1-\ep}.
\end{equation}

Take $p_1, p_2\in\OB\cap(q-\de, q+\de)$ with $p_1<p_2$. Then by Lemma \ref{l21} we have $\al(p_1)\prec \al(p_2)$, and therefore there exists a smallest integer $n\ge 1$ such that $\al_n(p_1)<\al_n(p_2)$. This implies that
\begin{equation}\label{e45}
\big|\pi_q(\al(p_2))-\pi_q(\al(p_1))\big|=\left |\sum_{i=1}^\f\frac{\al_i(p_2)-\al_i(p_1)}{q^i}\right |\le\sum_{i=n}^\f\frac{M}{q^i}=\; \frac{M q}{q-1}q^{-n}.
\end{equation}

On the other hand, observe that $q+\de<M+1$. Then $\al(p_2)\lle\al(q+\de)\prec\al(M+1)=M^\f$. So, there exists  $N\ge 1$ such that
\[\al_1(p_2)\ldots \al_N(p_2)\lle M^{N-1}(M-1).\] 
Since $p_2\in\OB\subseteq \overline{\ub}$, Lemma \ref{l23} (ii) gives
\[
1=\sum_{i=1}^\f\frac{\al_i(p_2)}{p_2^i}>\sum_{i=1}^n\frac{\al_i(p_2)}{p_2^i}+\frac{1}{p_2^{n+N}},
\]
which implies that
\begin{equation}\label{e46}
\begin{split}
\frac{1}{p_2^{n+N}} &<1-\sum_{i=1}^n\frac{\al_i(p_2)}{p_2^i}=\sum_{i=1}^\f\frac{\al_i(p_1)}{p_1^i}-\sum_{i=1}^n\frac{\al_i(p_2)}{p_2^i}\\
&\le \sum_{i=1}^n\left(\frac{\al_i(p_2)}{p_1^i}-\frac{\al_i(p_2)}{p_2^i}\right) \\
& \le \sum_{i=1}^\f\left(\frac{M}{p_1^i}-\frac{M}{p_2^i}\right) =  \frac{M}{(p_1-1)(p_2-1)}(p_2-p_1).
\end{split}
\end{equation}
Here the second inequality holds since 
\begin{align*}
&\al_1(p_1)\ldots \al_{n-1}(p_1)=\al_1(p_2)\ldots \al_{n-1}(p_2),\\
&  \al_n(p_1)<\al_n(p_2)\quad\textrm{and}\quad  \sum_{i=1}^\f{\al_{n+i}(p_1)}/{p_1^i}\le 1.
\end{align*}

Therefore, by (\ref{e44})--(\ref{e46}) we conclude that
\begin{align*}
\big|\pi_q(\al(p_2))-\pi_q(\al(p_1))\big| &\le \frac{M q^{N+1}}{q-1}\big(q^{-\frac{n+N}{1-\ep}}\big)^{1-\ep}\\
&\le \frac{M q^{N+1}}{q-1}(q+\de)^{-(n+N)(1-\ep)}\\
&\le \frac{M q^{N+1}}{q-1}p_2^{-(n+N)(1-\ep)}\;<C (p_2-p_1)^{1-\ep},
\end{align*}
where
\[C=\frac{M^{2-\ep} q^{N+1}}{(q-1)(q-\de-1)^{2(1-\ep)}}.\]
Note by Lemma \ref{l21} that the map $p\mapsto \al(p)$ is bijective from $\overline{\B}\cap(q-\de, q+\de)$ onto $\BS_\de(q)$. Hence, the lemma  follows by letting $f=\pi_q \circ \alpha$ in Lemma \ref{l42}.
\end{proof}
\begin{proof}[Proof of Proposition \ref{p41}]
The proposition follows from Lemmas \ref{l43} and \ref{l44}.
\end{proof}

\section{Local dimension of $\B$}\label{sec:proof of thm1}
In this section we will prove   Theorem \ref{t12}, which states that  for 
 any $q\in\OB$ we have
  \[\lim_{\de\ra 0}\dim_H(\OB\cap(q-\de, q+\de))= \dim_H\u_q.\]
 First we prove the upper bound.
\begin{proposition}
\label{p52}
For any  $q\in\OB$ we have
\[
\lim_{\de\ra 0}\dim_H(\OB\cap(q-\de, q+\de))\le \dim_H\u_q.
\]
\end{proposition}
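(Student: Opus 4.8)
The plan is to handle the two cases $q\notin\B$ (i.e., $q$ lies in some plateau, so $q\in\OB\setminus\B$) and $q\in\B$ separately, but in both cases the strategy reduces, via Proposition \ref{p41}, to controlling the dimension of $\pi_{q}(\BS_\de(q))$ as $\de\to 0$. By Proposition \ref{p41}, for each $\ep>0$ we can choose $\de>0$ small so that
\[
\dim_H(\OB\cap(q-\de,q+\de))\le (1+\ep)\dim_H\pi_{q+\de}(\BS_\de(q)),
\]
and since $\BS_\de(q)=\set{\al(p):p\in\OB\cap(q-\de,q+\de)}$ consists, by Lemma \ref{l23}(ii) and Lemma \ref{l22}, of sequences lying in $\us_{q+\de}\subset\vs_{q+\de}$, we get $\pi_{q+\de}(\BS_\de(q))\subseteq\v_{q+\de}$ and hence $\dim_H\pi_{q+\de}(\BS_\de(q))\le\dim_H\v_{q+\de}=\dim_H\u_{q+\de}$. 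So the first step is to combine this with the continuity of the entropy function $H$ (Lemma \ref{l24}(ii)) and the relation $\dim_H\u_r = h_{top}(\vs_r)/\log r$ to conclude $\limsup_{\de\to 0}\dim_H(\OB\cap(q-\de,q+\de))\le \dim_H\u_q$, as long as we can show $\dim_H\u_{q+\de}\to\dim_H\u_q$ as $\de\searrow 0$, which follows from the continuity of both $H(\cdot)$ and $\log(\cdot)$.

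However, this naive bound uses $q+\de$ rather than $q$ on the right, and that is fine for the upper bound because we are taking $\de\to 0$; the subtlety is that it only controls $\OB\cap(q-\de,q+\de)$ in terms of sequences admissible for base $q+\de$, which is slightly \emph{larger} than $\vs_q$. The cleaner route, and the one I would actually write, is to bound $\BS_\de(q)$ directly: every $p\in\OB\cap(q-\de,q+\de)$ satisfies $\al(p)\prec\al(q+\de)$, and more importantly $\si^n(\al(p))\lle\al(p)\lle\al(q+\de)$ and $\si^n(\al(p))\succ\overline{\al(p)}\succ\overline{\al(q+\de)}$ for all $n\ge 0$, so $\al(p)\in\vs_{q+\de}$. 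Thus $\BS_\de(q)\subseteq\vs_{q+\de}$ as a subset of sequence space, giving $h_{top}(\BS_\de(q))\le h_{top}(\vs_{q+\de})$ when the entropy of the left side is defined (and in any case $\dim_H\pi_{q+\de}(\BS_\de(q))\le h_{top}(\vs_{q+\de})/\log(q+\de)$ by a standard covering argument, since $\pi_{q+\de}$ maps cylinders of generation $n$ to intervals of length $\le C(q+\de)^{-n}$). Plugging this into Proposition \ref{p41}, for every $\ep>0$ and suitable $\de>0$,
\[
\dim_H(\OB\cap(q-\de,q+\de))\le (1+\ep)\,\frac{h_{top}(\vs_{q+\de})}{\log(q+\de)}.
\]

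The final step is to let $\de\to 0$. Since $\de$ depends on $\ep$ and we may always shrink $\de$ further (the left side is monotone non-decreasing in $\de$, and Proposition \ref{p41}'s conclusion persists for all smaller $\de$ once it holds for one), we obtain
\[
\lim_{\de\to 0}\dim_H(\OB\cap(q-\de,q+\de))\le (1+\ep)\lim_{\de\to 0}\frac{h_{top}(\vs_{q+\de})}{\log(q+\de)}=(1+\ep)\,\frac{h_{top}(\vs_q)}{\log q}=(1+\ep)\dim_H\u_q,
\]
using Lemma \ref{l24}: $H(r)=h_{top}(\vs_r)$ is continuous at $r=q$, $\log r$ is continuous and positive, and $\dim_H\u_q=h_{top}(\vs_q)/\log q$. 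Since $\ep>0$ was arbitrary, the claimed inequality follows (note also the limit exists because the quantity is monotone in $\de$ and bounded). I expect the main obstacle to be the bookkeeping around \emph{which base} to use in the covering estimate: one must be careful that the cylinder-to-interval length bound for $\pi_{q+\de}$ together with the admissibility $\BS_\de(q)\subseteq\vs_{q+\de}$ genuinely yields $\dim_H\pi_{q+\de}(\BS_\de(q))\le h_{top}(\vs_{q+\de})/\log(q+\de)$ — this is where one invokes that $\#\L_n(\vs_{q+\de})$ grows like $e^{nh_{top}(\vs_{q+\de})}$ and covers $\pi_{q+\de}(\BS_\de(q))$ by at most that many intervals of length $O((q+\de)^{-n})$, so the $s$-dimensional Hausdorff premeasure vanishes for $s>h_{top}(\vs_{q+\de})/\log(q+\de)$. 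Everything else is continuity of $H$ and elementary limits.
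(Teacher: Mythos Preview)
Your proof is correct and follows essentially the same route as the paper: apply Proposition~\ref{p41}, observe via Lemma~\ref{l23}(ii) that $\BS_\de(q)\subseteq\us_{q+\de}$, and use continuity of the dimension function (Lemma~\ref{l24}) to pass from $q+\de$ to $q$ as $\ep\to 0$. The paper is slightly more streamlined in that it bounds $\dim_H\pi_{q+\de}(\BS_\de(q))\le\dim_H\u_{q+\de}$ directly from the inclusion $\pi_{q+\de}(\BS_\de(q))\subseteq\u_{q+\de}$, so your separate covering argument via $\vs_{q+\de}$ is unnecessary (though correct); also, the case split $q\in\B$ versus $q\notin\B$ you mention in the plan is never actually needed, and the boundary case $q=M+1$ (where Proposition~\ref{p41} does not apply) is trivial since $\dim_H\u_{M+1}=1$.
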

\begin{proof}
Take $q\in\overline{\B}$. By Lemma \ref{l24}  and Proposition \ref{p41}  it follows that for any $\ep>0$ there exists a $\de>0$ such that
\begin{equation}\label{e51}
\begin{split}
 \dim_H\u_{q+\de}&\le \dim_H\u_q+\ep,\\
\dim_H(\OB\cap(q-\de, q+\de))&\le (1+\ep)\dim_H\pi_{q+\de}(\BS_\de(q)),
\end{split}
\end{equation}
where $\BS_\de(q)=\set{\al(p): p\in(q-\de, q+\de)\cap\OB}$.

Since $\OB\subseteq\overline{\ub}$,   Lemmas \ref{l21} and \ref{l23} (ii) give that any sequence $\al(p)\in \BS_\de(q)$ satisfies
   \[
   \overline{\al(q+\de)}\prec\overline{\al(p)}\prec\si^n(\al(p))\lle\al(p)\prec\al(q+\de)\qquad\textrm{for all}\quad n\ge 0.
   \]
  By Lemma \ref{l22} this implies that $ \BS_\de(q) \subseteq\us_{q+\de}$. Therefore, by (\ref{e51})   it follows that
\begin{align*}
\dim_H(\OB\cap(q-\de, q+\de)) &\le (1+\ep)\dim_H\pi_{q+\de}(\BS_\de(q))\\
 &\le(1+\ep)\dim_H\u_{q+\de} \le(1+\ep)(\dim_H\u_q+\ep).
\end{align*}
Since $\ep>0$ was arbitrary, this completes the proof.
\end{proof}

The proof of the  lower bound of Theorem \ref{t12}  is tedious. We will prove this in several steps.   First we need  the following  lemma.

 \begin{lemma}
 \label{l53}
  Let $[p_L, p_R]\subseteq(q_{KL}, M+1)$ be a plateau of $H$ such that  $\al(p_L)=(\al_1\ldots \al_m)^\f$ with period $m$. Then
    \begin{align*}
         &\al_{i+1}\ldots\al_m\prec \al_1\ldots\al_{m-i}&&\textrm{for all}\quad 0< i<m,\\
         &\al_{i+1}\ldots \al_m\al_1\ldots \al_i\succ \overline{\al_1\ldots \al_m}&&\textrm{for all}\quad 0\le i<m.
  \end{align*}
 \end{lemma}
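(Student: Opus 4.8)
\textbf{Proof plan for Lemma \ref{l53}.}

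The plan is to exploit that $\al(p_L)=(\al_1\ldots\al_m)^\f$ is the quasi-greedy expansion of $1$ in base $p_L$, together with the fact (from Lemma \ref{l29}(i), or Lemma \ref{l28}(i)) that this periodic sequence is $*$-irreducible (or irreducible), and that it lies in $\vs$. The first inequality is essentially the Parry-type admissibility condition for quasi-greedy expansions. Indeed, by Lemma \ref{l21}, since $\al(p_L)$ does not end in $0^\f$ and $p_L<M+1$, for each index $n$ with $\al_n<M$ we have $\si^n(\al(p_L))\lle\al(p_L)$. First I would argue that the inequality is in fact strict for the relevant shifts: if $\al_{i+1}\ldots\al_m\al_1\ldots = \si^i(\al(p_L))$ agreed with $\al(p_L)$ up to position $m-i$ and then we had equality $\al_{i+1}\ldots\al_m = \al_1\ldots\al_{m-i}$, one would need to rule out $\si^i(\al(p_L))=\al(p_L)$ (impossible since $m$ is the minimal period and $0<i<m$) as well as the borderline case where the quasi-greedy inequality is not strict; here the minimality of the period $m$ and the fact that $\al(p_L)$ is periodic force strictness. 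So the first displayed inequality should follow fairly directly from Lemma \ref{l21} once the period-minimality bookkeeping is done carefully.

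For the second inequality, $\al_{i+1}\ldots\al_m\al_1\ldots\al_i\succ\overline{\al_1\ldots\al_m}$ for all $0\le i<m$, I would use that $\al(p_L)\in\vs$, which by definition means $\overline{\al(p_L)}\lle\si^n(\al(p_L))\lle\al(p_L)$ for all $n\ge0$. Applying the lower inequality at $n=i$ gives $\overline{\al(p_L)}\lle\si^i(\al(p_L))$, i.e. $\overline{(\al_1\ldots\al_m)^\f}\lle(\al_{i+1}\ldots\al_m\al_1\ldots\al_i)^\f$. Since both sides are periodic with period $m$, comparing the first $m$ symbols gives $\overline{\al_1\ldots\al_m}\lle\al_{i+1}\ldots\al_m\al_1\ldots\al_i$, so I only need to upgrade $\lle$ to $\prec$, i.e. exclude equality $\al_{i+1}\ldots\al_m\al_1\ldots\al_i=\overline{\al_1\ldots\al_m}$. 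Here I expect the main obstacle: one must show this reflected-equality cannot occur. For $i=0$ it would say $\al_1\ldots\al_m=\overline{\al_1\ldots\al_m}$, which is impossible because $M$ is a positive integer and a word equal to its own reflection would force $\al_1=M-\al_1$, fine only if $M$ even and $\al_1=M/2$, but then since $\al(p_L)$ is a quasi-greedy expansion of $1$ with $\al_1=M/2$ we would get $p_L$ too small (indeed $\al(p_L)\lge\al(q_T)$ since $p_L\ge q_{KL}$ and in the relevant range forces $\al_1$ strictly larger, contradicting $p_L<q_T$ or handling the $p_L\ge q_T$ case via \eqref{e26} directly) — so this needs the explicit description of $\al(q_T)$ in \eqref{e26} and of the $\xi(n)$ in \eqref{e28}. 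For $0<i<m$, an equality $\al_{i+1}\ldots\al_m\al_1\ldots\al_i=\overline{\al_1\ldots\al_m}$ would make $(\al_i)$ a concatenation pattern that I would contradict using $*$-irreducibility (Definition \ref{def:*-irreducible}) or irreducibility (Definition \ref{def:irreducible}): the equality forces $(\al_1\ldots\al_i^-)^\f$ or a similar truncation to lie in $\vs$, and then irreducibility would demand $\al_1\ldots\al_i(\overline{\al_1\ldots\al_i}\,^+)^\f\prec\al(p_L)$, which one checks is incompatible with the periodic structure just derived. The cleanest route is probably to combine the reflected equality with the \emph{first} inequality already proved: from $\al_{i+1}\ldots\al_m\al_1\ldots\al_i=\overline{\al_1\ldots\al_m}$ one reads off $\al_{i+1}\ldots\al_m=\overline{\al_1\ldots\al_{m-i}}$ and $\al_1\ldots\al_i=\overline{\al_{m-i+1}\ldots\al_m}$, and feeding these back into $\si^{j}(\al(p_L))\lle\al(p_L)$ for a suitable $j$ produces a contradiction with $\al(p_L)\in\vs$ at a deeper level (a periodicity collapse). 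I would carry out this final contradiction by a careful but elementary lexicographic case analysis, treating the $M$ even and $M=2k+1$ cases in parallel using \eqref{eq:lambda}, \eqref{e26} and \eqref{e28}.

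In summary, the order of steps is: (1) record the quasi-greedy admissibility inequality from Lemma \ref{l21} and upgrade it to strict form using minimality of the period, yielding the first displayed inequality; (2) record the $\vs$-membership inequality $\overline{\al(p_L)}\lle\si^i(\al(p_L))$ and reduce the second displayed inequality to excluding the reflected equality; (3) exclude the reflected equality, which is the heart of the matter, by combining it with step (1) and with the irreducibility/$*$-irreducibility of $\al(p_L)$ (Lemmas \ref{l28}(i), \ref{l29}(i)) and the explicit forms \eqref{e26}, \eqref{e28}, \eqref{eq:lambda}. I expect step (3) to be the main obstacle, since it is where the special arithmetic of the Thue–Morse-type sequences $(\la_i)$ and the plateau-generating words genuinely enters; steps (1) and (2) are essentially formal consequences of the characterizations already quoted.
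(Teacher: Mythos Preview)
Your plan for the first inequality is workable, though the paper's route is slicker: since $(\al_1\ldots\al_m)^\f$ is the quasi-greedy $p_L$-expansion of $1$, the \emph{greedy} expansion is $\al_1\ldots\al_m^+0^\f$, and greedy expansions satisfy the admissibility condition \emph{strictly}; this gives $\al_{i+1}\ldots\al_m\prec\al_{i+1}\ldots\al_m^+\lle\al_1\ldots\al_{m-i}$ in one line, bypassing the minimal-period bookkeeping.

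The real issue is your step (3). You are working from $\al(p_L)\in\vs$, which only gives the weak inequality $\overline{\al(p_L)}\lle\si^i(\al(p_L))$, and then you propose to upgrade to strictness via irreducibility/$*$-irreducibility, the explicit forms \eqref{e26}, \eqref{e28}, \eqref{eq:lambda}, and a parity case split. This is both unnecessary and, as written, not convincingly sketched: you never explain, for instance, why the problematic case $i=m/2$ with $\al_{m/2+1}\ldots\al_m=\overline{\al_1\ldots\al_{m/2}}$ is excluded, and your appeal to irreducibility (``the equality forces $(\al_1\ldots\al_i^-)^\f\in\vs$'') is not justified.

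The key fact you are overlooking is that $p_L\in\OB\subset\overline{\ub}$ (Lemma \ref{l26}), and Lemma \ref{l23}(ii) characterizes $\overline{\ub}$ by the condition $\overline{\al(q)}\prec\si^n(\al(q))\lle\al(q)$ for all $n\ge 1$ --- the lower inequality is already \emph{strict}. Applying this at $n=i$ (or $n=m$ for the case $i=0$) gives $(\al_{i+1}\ldots\al_m\al_1\ldots\al_i)^\f=\si^i(\al(p_L))\succ\overline{\al(p_L)}=(\overline{\al_1\ldots\al_m})^\f$ immediately, so the second inequality drops out with no further work. Your entire step (3), and with it all the Thue--Morse machinery and the irreducibility definitions, is a detour around a one-line invocation of Lemma \ref{l23}(ii).
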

 \begin{proof}
Since $(\al_1\ldots \al_m)^\f$ is the quasi-greedy $p_L$-expansion of $1$ with  period $m$,
 the greedy $p_L$-expansion of $1$ is $\al_1\ldots \al_m^+ 0^\f$. So, by \cite[Propostion 2.2]{Vries-Komornik-Loreti-2016} it follows that
 $\si^n(\al_1\ldots\al_m^+0^\f)\prec \al_1\ldots\al_m^+ 0^\f$ for any $n\ge 1$. This implies
 \[
 \al_{i+1}\ldots \al_m\prec \al_{i+1}\ldots \al_m^+\lle \al_1\ldots \al_{m-i}\qquad\textrm{for any}\quad 0<i<m.
 \]

  Lemma \ref{l26} states  that $p_L\in\OB\subset\overline{\ub}$. Then by Lemma \ref{l23} (ii) we have that
 \begin{equation*}
(\al_{i+1}\ldots \al_m \al_1\ldots \al_i)^\f= \si^i((\al_1\ldots \al_m)^\f)\succ(\overline{\al_1\ldots \al_m})^\f 
 \end{equation*}
 {for any} $0\le i<m.$
 This implies that 
 \[\al_{i+1}\ldots \al_m\al_1\ldots \al_i\succ \overline{\al_1\ldots \al_m}\qquad \textrm{for any }0\le i<m.\qedhere\]
 \end{proof}

Let $[p_L, p_R]\subset(q_{KL}, M+1)$ be a plateau of $H$.  For any $N\ge 1$ let  $(\ws_{p_L,N}, \si)$ be a subshift of finite type in $\set{0,1,\ldots, M}^\N$ with the set of forbidden blocks  $c_1\ldots c_N$ satisfying 
\[ c_1\ldots c_N\lle \overline{\al_1(p_L)\ldots\al_N(p_L)}\qquad \textrm{or}\qquad c_1\ldots c_N\lge \al_1(p_L)\ldots \al_N(p_L).\]
Then any sequence $(x_i)\in\ws_{p_L,N}$ satisfies
\[
\overline{\al_1(p_L)\ldots\al_N(p_L)}\prec \si^n((x_i))\prec \al_1(p_L)\ldots\al_N(p_L)\qquad\textrm{for all}\quad n\ge 0.
\]
If $\al_N(p_L)>0$, then    $\ws_{p_L, N}$  is indeed the set of sequences $(x_i)\in\set{0,1,\ldots, M}^\N$ satisfying 
\[
(\overline{\al_1(p_L)\ldots \al_{N}(p_L)}^+)^\f\lle\si^n((x_i))\lle (\al_1(p_L)\ldots \al_N(p_L)^-)^\f
\]  
for all $n\ge 0$.
By the definition of $\ws_{p_L, N}$ it gives that 
\[
   \ws_{p_L,1}\subseteq\ws_{p_L,2}\subseteq\cdots\subseteq\vs_{p_L}.
\]
We emphasize  that $\ws_{p_L, 1}$ can be an empty set, and the inclusions in the above equation are not necessarily  strict. 

Observe that $(\vs_{p_L}, \si)$ is a subshift of finite type with positive topological entropy. The following asymptotic result  was established in
\cite[Proposition 2.8]{Komornik_Kong_Li_2015_1}.
 \begin{lemma}\label{l54}
Let $[p_L, p_R]\subseteq[q_{T}, M+1]$ be a plateau of $H$. Then
\[\lim_{N\ra\f}h_{top}(\ws_{p_L,N})= h_{top}(\vs_{p_L}).\]
\end{lemma}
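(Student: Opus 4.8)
The plan is to establish the two inequalities separately. Since $\ws_{p_L,1}\subseteq\ws_{p_L,2}\subseteq\cdots\subseteq\vs_{p_L}$, the numbers $h_{top}(\ws_{p_L,N})$ form a non-decreasing sequence bounded above by $h_{top}(\vs_{p_L})$, so the limit exists and is at most $h_{top}(\vs_{p_L})$. For the reverse inequality it suffices, by monotonicity, to prove that for every $\ep>0$ there is \emph{some} $N$ with $h_{top}(\ws_{p_L,N})\ge h_{top}(\vs_{p_L})-\ep$. I would prove this by constructing, inside $\ws_{p_L,N}$ for a suitable $N$, a subshift of entropy at least $h_{top}(\vs_{p_L})-\ep$, obtained by concatenating long admissible words of $\vs_{p_L}$ with a fixed periodic ``buffer'' inserted between consecutive blocks.

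Write $h:=h_{top}(\vs_{p_L})$ and $\al(p_L)=(\al_1\ldots\al_m)^\f$ as in Lemma \ref{l28}(i), and recall that $(\vs_{p_L},\si)$ is a transitive subshift of finite type by Lemma \ref{l28}(ii). Let $\nu$ be the word of Lemma \ref{l28}(iii), so that $\nu^\f\in\vs_{p_L}$ and, by \eqref{e27}, every shift of $\nu^\f$ is strictly between $\overline{\al_1\al_2}$ and $\al_1\al_2$. Choose $K$ so large that the word $\nu^K$ occurs neither in $(\al_1\ldots\al_m)^\f$ nor in $\overline{(\al_1\ldots\al_m)}^\f$; such $K$ exists because the hypothesis $p_L\ge q_T$, through the explicit form of $\al(q_T)$ in \eqref{e26} and the period-block inequalities of Lemma \ref{l53}, prevents $\al(p_L)$ (hence also $\overline{\al(p_L)}$) from being a periodic repetition of a sub-word of $\nu$, so that by a Fine--Wilf argument the $\nu$-runs inside $(\al_1\ldots\al_m)^\f$ and its reflection are bounded. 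Now fix a large integer $n$ and, using transitivity of $\vs_{p_L}$, attach to each word $u\in\L_n(\vs_{p_L})$ connecting words of length at most a constant $c_0$ (independent of $u$ and $n$), forming blocks of the shape $(\mathrm{connector})\,u\,(\mathrm{connector})\,\nu^K$ that all lie in $\L(\vs_{p_L})$ and that can be concatenated to one another along legal junctions; let $Y=Y_n$ be the subshift generated by all such bi-infinite concatenations. A block has length at most $n+c$ with $c$ a constant, and the $u$'s range over a set of size $\#\L_n(\vs_{p_L})\ge e^{nh}$, so $h_{top}(Y)\ge\frac{\log\#\L_n(\vs_{p_L})}{n+c}\ge\frac{nh}{n+c}$, which exceeds $h-\ep$ once $n$ is large enough.

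It remains to check that $Y\subseteq\ws_{p_L,N}$ for some $N$. Every sequence of $Y$ is a legal bi-infinite path in the subshift of finite type $\vs_{p_L}$, hence lies in $\vs_{p_L}$ and satisfies $\overline{\al(p_L)}\lle\si^n((x_i))\lle\al(p_L)$ for all $n\ge 0$; moreover no shift $\si^n((x_i))$ can equal $\al(p_L)=(\al_1\ldots\al_m)^\f$ or $\overline{\al(p_L)}$, since the tail would then contain $\nu^K$ as a factor of $(\al_1\ldots\al_m)^\f$ or of its reflection, contrary to the choice of $K$. Hence every sequence of $Y$ lies in the set $\mathbf W_{p_L}:=\set{(x_i):\overline{\al(p_L)}\prec\si^n((x_i))\prec\al(p_L)\textrm{ for all }n\ge 0}$ occurring in the proof of Lemma \ref{l32}. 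Since $Y$ is compact and shift-invariant and $\al(p_L),\overline{\al(p_L)}\notin Y$, there is a uniform bound $L_0$ on the length of any common prefix of some $\si^n((x_i))$, $(x_i)\in Y$, with $\al(p_L)$ or with $\overline{\al(p_L)}$ --- otherwise a limit of such shifts would be a point of the closed set $Y$ equal to $\al(p_L)$ or $\overline{\al(p_L)}$. Consequently, for every $N>L_0$, each length-$N$ window of a sequence in $Y$ is strictly between $\overline{\al_1\ldots\al_N}$ and $\al_1\ldots\al_N$, that is $Y\subseteq\ws_{p_L,N}$, and therefore $h_{top}(\ws_{p_L,N})\ge h_{top}(Y)\ge\frac{nh}{n+c}>h-\ep$.

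The step I expect to require the most care is the combinatorial input underlying the choice of $K$, namely that the periodic buffer $\nu^K$ cannot be embedded inside a copy of $\al(p_L)$ or of $\overline{\al(p_L)}$; this is exactly where the standing hypothesis $p_L\ge q_T$, the explicit shape of $\al(q_T)$ in \eqref{e26}, and the period-block inequalities of Lemma \ref{l53} are used. The other ingredients --- the monotonicity reduction, the legal concatenation and entropy estimate for $Y$, and the compactness passage from $\mathbf W_{p_L}$ to $\ws_{p_L,N}$ --- are routine.
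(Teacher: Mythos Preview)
The paper does not actually prove this lemma; it simply quotes \cite[Proposition~2.8]{Komornik_Kong_Li_2015_1}. Your argument is essentially sound as a self-contained proof, and the strategy---build a high-entropy coded subshift $Y\subseteq\vs_{p_L}$ whose sequences are uniformly bounded away from the orbits of $\al(p_L)$ and $\overline{\al(p_L)}$, then use compactness to land $Y$ inside some $\ws_{p_L,N}$---is a legitimate one. The step you single out (the existence of $K$) is in fact easy here: for $M=2k$ one has $\nu=k$ while $\al_1(p_L)\ge k+1$, and for $M=2k+1$ one has $\nu=(k+1)k$ while $\al_1(p_L)\al_2(p_L)\lge(k+1)(k+1)$ by \eqref{e26}, so $\nu^K$ with $K\ge m$ already fails to occur in the period-$m$ sequence $(\al_1\ldots\al_m)^\f$ or its reflection; no Fine--Wilf argument is needed. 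The only places that want a little more care than you indicate are (a) arranging the blocks to have a \emph{common} length (pad with extra copies of $\nu$) so that the bound $h_{top}(Y)\ge\frac{\log\#\L_n(\vs_{p_L})}{n+c}$ is clean, and (b) checking that the map $u\mapsto\text{block}$ is injective; both are routine.

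That said, within the paper's toolkit there is a much shorter route to the lower bound, bypassing the block construction entirely. For any $q<p_L$, Lemma~\ref{l21} gives an $N_0$ with $\al_1(q)\ldots\al_N(q)\prec\al_1(p_L)\ldots\al_N(p_L)$ for all $N\ge N_0$; directly from the definitions one then has $\vs_q\subseteq\ws_{p_L,N}$ for every such $N$, hence $\liminf_N h_{top}(\ws_{p_L,N})\ge H(q)$. Letting $q\nearrow p_L$ and invoking the continuity of $H$ from Lemma~\ref{l24}(ii) (itself imported from the same reference) finishes the proof in one line. Your construction has the merit of not relying on the continuity of $H$, so it is closer to a proof from first principles; the short argument trades that independence for a fact the paper already takes for granted.
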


Recall from \eqref{e28} that  
\begin{align*}
\xi(n)&=\la_1\ldots\la_{2^{n-1}}(\overline{\la_1\ldots\la_{2^{n-1}}}\,^+)^\f&\textrm{if}\quad M=2k,\\
\xi(n)&=\la_1\ldots\la_{2^n}(\overline{\la_1\ldots\la_{2^n}}\,^+)^\f&\textrm{if}\quad M=2k+1.
\end{align*}
Note  that the sequence $(\la_i)$ in the definition of $\xi(n)$ depends on $M$. 
In the following lemma we show that the  entropy of $(\ws_{p_L,N}, \si)$ is equal to the entropy of the follower set $F_{\ws_{p_L,N}}(\nu)$ for all sufficiently large integers $N$, where $\nu$ is the word defined in Lemma \ref{l28} (iii) or Lemma \ref{l29} (iii).

\begin{lemma}\label{l55}
\mbox{}
\begin{enumerate}[{\rm(i)}]
\item Let $[p_L, p_R]\subset[q_T, M+1]$ be a plateau of $H$, and let 
\[
\nu=\left\{
\begin{array}{lll}
k&\textrm{if}& M=2k,\\
(k+1)k&\textrm{if}& M=2k+1.
\end{array}\right.
\] 
Then for all sufficiently large integers $N$ we have
\[h_{top}(F_{\ws_{p_L,N}}(\nu^\ell))=h_{top}(\ws_{p_L, N})\qquad\textrm{for any}\quad \ell\ge 1.\]

\item  Let $[p_L, p_R]\subset(q_{KL}, q_T)$ be a plateau of $H$ with $\xi(n+1)\lle \al(p_L)\prec \xi(n)$. Set
\[
\nu=\left\{
\begin{array}{lll}
\la_1\ldots\la_{2^n}^-&\textrm{if}& M=2k,\\
\la_1\ldots\la_{2^{n+1}}^-&\textrm{if}& M=2k+1.
\end{array}\right.
\]
Then  for all sufficiently large integers $N$ we have
\[h_{top}(F_{\ws_{p_L,N}}(\nu^\ell))=h_{top}(\ws_{p_L, N})\qquad\textrm{for any}\quad \ell\ge 1.\]
\end{enumerate}
\end{lemma}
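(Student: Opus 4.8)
\medskip

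\noindent\emph{Proof proposal.}
I would treat both parts by the same scheme, and one direction is immediate: since $F_{\ws_{p_L,N}}(\nu^\ell)\subseteq\ws_{p_L,N}$ we have $\#\L_n(F_{\ws_{p_L,N}}(\nu^\ell))\le\#\L_n(\ws_{p_L,N})$ for every $n$, hence $h_{top}(F_{\ws_{p_L,N}}(\nu^\ell))\le h_{top}(\ws_{p_L,N})$ by \eqref{e22}. The point is the reverse inequality, which I would deduce from the following claim: \emph{for all sufficiently large $N$ the subshift of finite type $\ws_{p_L,N}$ contains a transitive subshift of finite type $Z_N$ with $\nu^\f\in Z_N$ and $h_{top}(Z_N)=h_{top}(\ws_{p_L,N})$.} Granting this, recall that in a transitive subshift of finite type the follower set of any word in the language carries the full entropy of the space (the associated adjacency matrix is irreducible, so the number of length-$n$ paths from a fixed vertex grows at the Perron rate). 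Since $\nu^\f\in Z_N$ gives $\nu^\ell\in\L(Z_N)$ for every $\ell\ge 1$, we obtain
\[
h_{top}(F_{\ws_{p_L,N}}(\nu^\ell))\ge h_{top}(F_{Z_N}(\nu^\ell))=h_{top}(Z_N)=h_{top}(\ws_{p_L,N}),
\]
which together with the trivial bound is exactly the assertion, with a threshold on $N$ that is uniform in $\ell$.

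First I would check that $\nu^\f\in\ws_{p_L,N}$ for all large $N$, that is,
\[
\overline{\al_1(p_L)\ldots\al_N(p_L)}\prec\si^j(\nu^\f)\prec\al_1(p_L)\ldots\al_N(p_L)\qquad\textrm{for all}\quad j\ge 0 .
\]
When $[p_L,p_R]\subseteq[q_T,M+1]$ this is a short digit computation: $p_L\ge q_T$ forces $\al_1(p_L)\ge k+1$ by Lemma \ref{l21} and \eqref{e26}, so $\overline{\al_1(p_L)}\le k-1$ when $M=2k$ and $\overline{\al_1(p_L)}\le k$ when $M=2k+1$; comparing the first (and, when $M=2k+1$ and $\al_1(p_L)=k+1$, the second) digits of $\si^j(\nu^\f)$, which lie among $k,k+1$, against $\al(p_L)$ and $\overline{\al(p_L)}$ yields the displayed inequalities once $N$ exceeds a fixed bound; this is just \eqref{e27} upgraded from two digits to $N$. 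When $[p_L,p_R]\subset(q_{KL},q_T)$ the same statement, now with $\nu$ as prescribed and $\xi(n+1)\lle\al(p_L)\prec\xi(n)$, is precisely what is established inside the proof of Lemma \ref{l29}(iii), using $\al(p_L)\succ\xi(n+1)=\nu^+(\overline{\nu})^\f$ together with \eqref{eq:lambda} and \eqref{e28}.

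For the transitive witness $Z_N$ I would distinguish the two cases. When $[p_L,p_R]\subseteq[q_T,M+1]$, $(\vs_{p_L},\si)$ is transitive by Lemma \ref{l28}(ii) and $h_{top}(\ws_{p_L,N})\to h_{top}(\vs_{p_L})$ by Lemma \ref{l54}, so for $N$ large $\ws_{p_L,N}$ has a maximal-entropy transitive component, and the connecting property of Lemma \ref{l28}(iii) — which appends a bounded word $\om$ to any $\eta\in\L(\ws_{p_L,N})$ so that $\eta\om\nu^\f\in\ws_{p_L,N}$, the margin being taken $\ge N$ — forces $\nu^\f$ into that component, which I take for $Z_N$. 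When $[p_L,p_R]\subset(q_{KL},q_T)$, Lemma \ref{l29}(ii) already supplies a \emph{unique} transitive subshift of finite type $X_{p_L}\subseteq\vs_{p_L}$ with $h_{top}(X_{p_L})=h_{top}(\vs_{p_L})$ and $\nu^\f\in X_{p_L}$; here I would again take $Z_N$ to be the maximal-entropy transitive component of $\ws_{p_L,N}$, invoke Lemma \ref{l29}(iii) together with the fact that $\ws_{p_L,N}$ increases to the ``open'' set $\mathbf{W}_{p_L}$ of Lemma \ref{l32} (which has the same entropy as $\vs_{p_L}$) to see that $h_{top}(Z_N)=h_{top}(\ws_{p_L,N})$ for $N$ large, and use the uniqueness of $X_{p_L}$ to place $\nu^\f$ in $Z_N$.

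The main obstacle is exactly this last step: certifying that $\nu^\f$ sits in a transitive component of $\ws_{p_L,N}$ of \emph{full} entropy, rather than merely of entropy close to $h_{top}(\vs_{p_L})$. Concretely one must steer an arbitrary word of $\L(\ws_{p_L,N})$, through a connecting word of bounded length, into a block agreeing with $\nu^\f$, while keeping every length-$N$ window strictly between $\overline{\al_1(p_L)\ldots\al_N(p_L)}$ and $\al_1(p_L)\ldots\al_N(p_L)$; the connecting Lemmas \ref{l28}(iii) and \ref{l29}(iii) are the instruments for this, and it is here that $N$ must be large. In the easy case $M=2k$ of part (i) no connecting word is needed at all: one simply prepends the padding word $k^N$ and observes that any length-$N$ window beginning with the digit $k$ is automatically strictly between $\overline{\al_1(p_L)\ldots\al_N(p_L)}$ and $\al_1(p_L)\ldots\al_N(p_L)$ (indeed $\overline{\al_1(p_L)}\le k-1<k<k+1\le\al_1(p_L)$), so that the lemma in this case holds for every $N$; but already for $M=2k+1$ with $\al_1(p_L)=k+1$ — where no digit lies strictly between $\overline{\al_1(p_L)}$ and $\al_1(p_L)$ — and throughout part (ii), the connecting words genuinely enter, and the junction analysis there rests on the irreducibility-type inequalities of Lemma \ref{l53}.
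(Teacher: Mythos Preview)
Your overall strategy --- connect $\nu^\ell$ to enough of $\ws_{p_L,N}$ via a connector of uniformly bounded length --- is exactly the paper's, and your final paragraph is essentially the paper's proof of part~(i): for $M=2k$ one takes $\gamma=\epsilon$ since $\overline{\al_1(p_L)}\le k-1<k<k+1\le\al_1(p_L)$, and for $M=2k+1$ one takes $\gamma\in\{\epsilon,\,k+1\}$ according as the first digit of $\rho$ is $\ge k+1$ or $\le k$. The detour through a transitive witness $Z_N$ is unnecessary here; once $\nu^\ell\gamma\rho\in\L(\ws_{p_L,N})$ for every $\rho\in\L(\ws_{p_L,N})$ with $|\gamma|$ bounded, the entropy equality is immediate from~\eqref{e22}.

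For part~(ii) there is a genuine gap. Your plan is to take $Z_N$ the maximal-entropy transitive component of $\ws_{p_L,N}$ and then ``use the uniqueness of $X_{p_L}$ to place $\nu^\f$ in $Z_N$''. But the uniqueness in Lemma~\ref{l29}(ii) is a statement about transitive sub-SFTs of $\vs_{p_L}$ with entropy equal to $h_{top}(\vs_{p_L})$, whereas $Z_N\subseteq\ws_{p_L,N}\subsetneq\vs_{p_L}$ has entropy $h_{top}(\ws_{p_L,N})$, which is in general strictly smaller; the uniqueness of $X_{p_L}$ says nothing about which component of $\ws_{p_L,N}$ the sequence $\nu^\f$ lies in. Likewise, Lemma~\ref{l29}(iii) produces for each $\eta$ an $N$ depending on $\eta$, so it does not by itself give a uniform connector inside a fixed $\ws_{p_L,N}$.

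The paper avoids this obstacle by \emph{not} trying to put $\nu^\f$ inside $X_N$. Instead it chooses any transitive $X_N\subseteq\ws_{p_L,N}$ with $h_{top}(X_N)=h_{top}(\ws_{p_L,N})>0$ (this requires $N$ large via Lemma~\ref{l54}) and observes, by an entropy pigeonhole, that at least one of the words $\la_1\ldots\la_{2^n}$, $\overline{\la_1\ldots\la_{2^n}}$ must occur in $\L(X_N)$: if neither did, then by~\eqref{e52} one would have $X_N\subseteq\ws_{p_L,2^n}=\ws_{q_{KL},2^n}\subseteq\vs_{q_{KL}}$, forcing $h_{top}(X_N)=0$. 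Having secured such a prefix in $\L(X_N)$, the paper then connects $\nu^\ell$ to any $\rho\in\L(X_N)$ beginning with that prefix via $\gamma\in\{\epsilon,\,\la_1\ldots\la_{2^{n-1}}\}$, the junction being controlled by the Thue--Morse inequalities~\eqref{e53}. This yields $h_{top}(F_{\ws_{p_L,N}}(\nu^\ell))\ge h_{top}(X_N)=h_{top}(\ws_{p_L,N})$ directly. The missing ingredient in your proposal is precisely this entropy argument locating a usable prefix inside $X_N$; once you have it, the transitive-witness wrapper can be dropped.
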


\begin{proof}
Take $\ell\ge 1$.
First we prove (i). By Lemma \ref{l28} (iii) there exists a large integer $N\ge 2$ such that
$\nu^\ell\in\L(\ws_{p_L, N})$. Since $(\ws_{p_L,N}, \si)$ is a subshift of finite type, to prove (i) it suffices to prove that for any word $\rho\in\L(\ws_{p_L,N})$ there exists a word $\ga$ of uniformly bounded length   for which $\nu^\ell\ga\rho\in\L(\ws_{p_L, N})$.

Take $\rho=\rho_1\ldots \rho_m\in\L(\ws_{p_L,N})$. If $M=2k$, then  $\nu=k$. Since  $\al(p_L)\lge\al(q_T)=(k+1)k^\f$, we have
\[
\overline{\al_1(p_L)}\le k-1<\nu<k+1\le \al_1(p_L).
\]
So, $\nu^\ell\ga\rho\in\L(\ws_{p_L, N})$ by taking $\ga=\epsilon$ the empty word.
Similarly, if $M=2k+1$ then $\nu=(k+1)k$. Observe that $\al(p_L)\lge \al(q_T)=(k+1)((k+1)k)^\f$. This implies that $\nu^\ell\ga\rho\in\L(\ws_{p_L, N})$ by taking $\ga=\epsilon$ if the initial  word $\rho_1\ge k+1$, and by taking $\ga=k+1$ if $\rho_1\le k$.

Now we turn to prove (ii).  We only give the proof for $M=2k$, since the proof for $M=2k+1$ is similar. Then $\nu=\la_1\ldots\la_{2^n}^-$. By Lemma \ref{l29} (iii) there exists a large integer $N\ge 2^{n+1}$ such that $\nu^\f=(\la_1\ldots\la_{2^n}^-)^\f\in \ws_{p_L,N}$.  Since $h_{top}(\vs_{p_L})>0$, by Lemma \ref{l54} we can choose   $N$ sufficiently large  such that $h_{top}(\ws_{p_L, N})>0$.  Since $\ws_{p_L, N}$ is a subshift of finite type,  there exists a transitive subshift of finite type $X_N\subset\ws_{p_L,N}$ for which $h_{top}(X_N)=h_{top}(\ws_{p_L, N})$ (cf.~\cite[Theorem 4.4.4]{Lind_Marcus_1995}). We  claim that the word $\la_1\ldots\la_{2^n}$ or   $\overline{\la_1\ldots\la_{2^n}}$ belongs to $\L(X_{N})$.

By \eqref{e28} and \eqref{eq:lambda} it follows that
\[\xi(n)=\la_1\ldots\la_{2^{n-1}}(\overline{\la_1\ldots\la_{2^{n-1}}}\,^+)^\f=\la_1\ldots\la_{2^n}(\overline{\la_1\ldots\la_{2^{n-1}}}\,^+)^\f.\]
  Then the assumption $\xi(n+1)\lle\al(p_L)\prec\xi(n)$ gives that
\begin{equation}\label{e52}
\al_1(p_L)\ldots\al_{2^n}(p_L)= \la_1\ldots\la_{2^{n}}=\al_1(q_{KL})\ldots\al_{2^n}(q_{KL}).
\end{equation}
 Suppose that the words $\la_1\ldots\la_{2^n}$ and    $\overline{\la_1\ldots\la_{2^n}}$ do  not belong to $\L(X_{N})$.  Then by \eqref{e52} we have
\[X_N\subset\ws_{p_L, 2^n}=\ws_{q_{KL}, 2^n}\subset\vs_{q_{KL}}.\]
So, by Lemma \ref{l24} it follows that  $X_N$ has zero topological entropy, leading   to a contradiction with $h_{top}(X_N)=h_{top}(\ws_{p_L,N})>0$.

By the claim, to finish the proof of (ii) it suffices to prove that  for any word $\rho\in\L(X_N)$ with a prefix $\la_1\ldots\la_{2^n}$ or   $\overline{\la_1\ldots\la_{2^n}}$ there exists a word $\ga$ of uniformly bounded length  such that $\nu^\ell\ga\rho\in\L(\ws_{p_L, N})$. In \cite[Lemma 4.2]{Kong_Li_2015} (see also, \cite[Lemma 4.2]{AlcarazBarrera-Baker-Kong-2016}) it was shown that for any $n\ge 1$ we have
\begin{equation*}
\overline{\la_1\ldots\la_{2^n-i}}\prec\la_{i+1}\ldots\la_{2^n}\lle\la_1\ldots\la_{2^n-i}\qquad\textrm{for any}\quad 0\le i<2^n.
\end{equation*}
This implies that for any $0\le i<2^n$ we have
\begin{equation}\label{e53}
  \la_{i+1}\ldots\la_{2^n}^-\prec\la_1\ldots\la_{2^n-i}\quad\textrm{and}\quad \la_{i+1}\ldots\la_{2^n}^-\la_1\ldots\la_i\succ\overline{\la_1\ldots\la_{2^n}}.
\end{equation}
Observe that 
\[\nu=\la_1\ldots\la_{2^n}^-=\la_1\ldots\la_{2^{n-1}}\overline{\la_1\ldots\la_{2^{n-1}}}.\]
 Then by \eqref{e52} and \eqref{e53} it follows that if $\la_1\ldots\la_{2^n}$ is a prefix of $\rho$, then  $\nu^\ell\ga\rho\in\L(\ws_{p_L, N})$ by taking  $\ga=\epsilon$ the empty word, and if $\overline{\la_1\ldots\la_{2^n}}$ is a prefix of $\rho$ then $\nu^\ell\ga\rho\in\L(\ws_{p_L, N})$ by taking  $\ga=\la_1\ldots\la_{2^{n-1}}$.
\end{proof}

In the following lemma   we prove the lower bound of Theorem \ref{t12}  for $q\in[q_T, M+1]$ being the left endpoint of  an entropy  plateau.

 \begin{lemma}\label{l56}
 Let $[p_L, p_R]\subseteq[q_T, M+1]$ be a plateau  of $H$. Then for any $\de>0$ we have
 \[
 \dim_H(\OB\cap(p_L-\de, p_L+\de))\ge \dim_H\u_{p_L}.
 \]
  \end{lemma}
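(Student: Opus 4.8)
The plan is to reduce the statement to a statement about symbolic dynamics, then use the auxiliary Proposition \ref{p41} together with the structural results on plateaus in $[q_T,M+1]$ (Lemma \ref{l28}), the approximation Lemma \ref{l54}, and the ``follower set'' Lemma \ref{l55}(i). The upper bound $\dim_H(\OB\cap(p_L-\de,p_L+\de))\le\dim_H\u_{p_L}$ is already covered by Proposition \ref{p52}, so the content here is the lower bound. First I would fix $\ep>0$ and, via Proposition \ref{p41}, choose $\de>0$ so that $\dim_H(\OB\cap(p_L-\de,p_L+\de))\ge(1-\ep)\dim_H\pi_{p_L}(\BS_\de(p_L))$, where $\BS_\de(p_L)=\set{\al(p):p\in\OB\cap(p_L-\de,p_L+\de)}$. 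Thus it suffices to produce, inside $\BS_\de(p_L)$, a symbolic set whose image under $\pi_{p_L}$ has Hausdorff dimension close to $\dim_H\u_{p_L}=h_{top}(\vs_{p_L})/\log p_L$.

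The key step is to construct many irreducible (hence in $\OB$ by Lemma \ref{l220}) bases just below $p_L$ whose quasi-greedy expansions fill up a positive-entropy piece of $\vs_{p_L}$. Concretely, using Lemma \ref{l28}(i) write $\al(p_L)=(a_1\ldots a_m)^\f$ with $(a_1\ldots a_m)^\f$ irreducible, and recall from Lemma \ref{l28}(iii) the word $\nu$ (equal to $k$ if $M=2k$, or $(k+1)k$ if $M=2k+1$) together with the fact that $\nu^\f\in\vs_{p_L}$ and any word of $\L(\vs_{p_L})$ can be extended by a bounded word to reach $\nu^\f$. For a large integer $N$ (to be chosen), consider the subshift $\ws_{p_L,N}$ and, by Lemma \ref{l55}(i), the follower set $F_{\ws_{p_L,N}}(\nu^\ell)$ has entropy equal to $h_{top}(\ws_{p_L,N})$ once $N$ is large. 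The idea is: for each finite word $w\in\L(F_{\ws_{p_L,N}}(\nu^\ell))$ — i.e. $w$ starting with $\nu^\ell$ — form the periodic sequence $(\nu^\ell w \, \nu^\ell w\ldots)$, or rather a suitable concatenation $\nu^\infty$-anchored block that one can certify to be irreducible and to lie strictly between $\overline{\al(p_L)}$ and $\al(p_L)$ in the $\si$-shifted sense. Lemma \ref{l53} gives exactly the lexicographic inequalities on $a_1\ldots a_m$ that allow one to check irreducibility of such periodic words: the ``core'' constraint $a_1\ldots a_j(\overline{a_1\ldots a_j}^+)^\f\prec(a_i)$ whenever $(a_1\ldots a_j^-)^\f\in\vs$ is automatically satisfied for periodic words built from blocks of $\ws_{p_L,N}$ padded by $\nu$, because the $\nu$-padding forces the candidate prefixes that could violate irreducibility to coincide with $a_1\ldots a_m$-prefixes, which are controlled by Lemma \ref{l53}. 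Each such periodic word $\al(p)$ satisfies $\al(p)\prec\al(p_L)$ (it agrees with $\al(p_L)$ on a long prefix but then dips below), so $p<p_L$; and by taking $N$ large and $w$ long the resulting $p$ is within $\de$ of $p_L$. This embeds (a coded copy of) $F_{\ws_{p_L,N}}(\nu^\ell)$ into $\BS_\de(p_L)$ up to a bounded coding.

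Then I would pass to Hausdorff dimension: the map $\pi_{p_L}$ restricted to the image of this coded copy is bi-Hölder onto a graph-directed self-affine set satisfying the open set condition (as in the proof of Lemma \ref{l33}), so
\[
\dim_H\pi_{p_L}(\BS_\de(p_L))\ge\frac{h_{top}(F_{\ws_{p_L,N}}(\nu^\ell))}{\log p_L}=\frac{h_{top}(\ws_{p_L,N})}{\log p_L}.
\]
Now let $N\to\f$ and invoke Lemma \ref{l54}, which gives $h_{top}(\ws_{p_L,N})\to h_{top}(\vs_{p_L})$. Combining,
\[
\dim_H(\OB\cap(p_L-\de,p_L+\de))\ge(1-\ep)\cdot\frac{h_{top}(\vs_{p_L})}{\log p_L}=(1-\ep)\dim_H\u_{p_L},
\]
and since $\ep>0$ is arbitrary and the left side is monotone in $\de$, letting $\ep\to0$ finishes the proof. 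The main obstacle I anticipate is the bookkeeping in the irreducibility check: one must verify that the concrete periodic words assembled from $\ws_{p_L,N}$-blocks glued by $\nu$-padding are genuinely irreducible elements of $\vs$ (not merely in $\vs$), which requires carefully matching the ``bad prefix'' $a_1\ldots a_j$ in Definition \ref{def:irreducible} against the structure of the block, and this is exactly where Lemma \ref{l53} and the defining property of $\nu$ from Lemma \ref{l28}(iii) must be combined delicately; the dimension/entropy bookkeeping afterward is routine given Lemmas \ref{l24}, \ref{l54}, \ref{l55} and the open-set-condition argument already used for Lemma \ref{l33}.
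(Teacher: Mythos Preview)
Your overall architecture matches the paper's: reduce via Proposition~\ref{p41} to lower-bounding $\dim_H\pi_{p_L}(\BS_\de(p_L))$, embed a large-entropy piece of $\ws_{p_L,N}$ into $\BS_\de(p_L)$, and finish with Lemmas~\ref{l54} and~\ref{l24}. But your concrete construction has a genuine gap. The sequences you propose, of the form $(\nu^\ell w)^\f$ with $w$ beginning in $\nu^\ell$, start with the digit $\nu_1$ (equal to $k$ when $M=2k$, or $k+1$ when $M=2k+1$), not with $\al_1(p_L)$. Since $p_L\ge q_T$ forces $\al_1(p_L)\ge k+1$ (strictly $>k$ in the even case), these sequences do \emph{not} agree with $\al(p_L)$ on a long prefix, so the associated bases are bounded away from $p_L$; making $N$ or $|w|$ large does nothing to fix this. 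Hence your set is not contained in $\BS_\de(p_L)$ for small $\de$, and the claimed inclusion ``by taking $N$ large and $w$ long the resulting $p$ is within $\de$ of $p_L$'' fails.

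The paper repairs exactly this point by anchoring the constructed sequences at $\al(p_L)$: it takes the set $\La_N$ of sequences $(\al_1\ldots\al_m)^N d_1d_2\ldots$ with $(d_i)\in F_{\ws_{p_L,N}}(\al_1\ldots\al_m)$. These share a prefix of length $m(N+1)$ with $\al(p_L)=(\al_1\ldots\al_m)^\f$, so they land in $\BS_\de(p_L)$ once $N$ is large. The link to Lemma~\ref{l55}(i) (which concerns the $\nu$-follower set, not the $\al_1\ldots\al_m$-follower set) is made through the connecting word $\om$ of Lemma~\ref{l28}(iii): since $\al_1\ldots\al_m\om\nu^N\in\L(\ws_{p_L,N})$ and $\ws_{p_L,N}$ is an $N$-step subshift of finite type, prepending $\al_1\ldots\al_m\om$ embeds $F_{\ws_{p_L,N}}(\nu^N)$ into $F_{\ws_{p_L,N}}(\al_1\ldots\al_m)$, giving $h_{top}(F_{\ws_{p_L,N}}(\al_1\ldots\al_m))=h_{top}(\ws_{p_L,N})$. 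The irreducibility check for sequences in $\La_N$ then splits cleanly into three ranges of $j$ in Definition~\ref{def:irreducible}: for $1\le j\le m$ one uses the irreducibility of $\al(p_L)$ itself (on a finite prefix, whence an auxiliary threshold $N_1$); for $m<j\le N$ one uses the periodic prefix $(\al_1\ldots\al_m)^N$ together with Lemma~\ref{l53}; and for $j>N$ the defining inequalities of $\ws_{p_L,N}$ on the tail handle it directly. Your sketch invokes Lemma~\ref{l53} and the $\nu$-padding for the irreducibility step, but without the $(\al_1\ldots\al_m)^N$ anchor neither the ``close to $p_L$'' claim nor the case analysis for irreducibility goes through as written.
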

  \begin{proof}
 By Lemma \ref{l28} (i)  it follows that
$\al(p_L)=(\al_i)=(\al_1\ldots \al_m)^\f$
  is an irreducible  sequence, where   $m$ is the minimal period of  $\al(p_L)$.  Then,   there exists a large integer $N_1>m$ such that
 \begin{equation}\label{e54}
 \al_1\cdots\al_j(\overline{\al_1\ldots\al_j}\,^+)^\f\prec \al_1\ldots \al_{N_1}\qquad\textrm{if}\quad (\al_1\ldots\al_j^-)^\f\in\vs\textrm{  and  }1\le j\le m.
\end{equation}

Let $\nu$ be the word defined in     Lemma \ref{l55} (i).   Then by Lemma \ref{l28} (iii) there exist a large integer $N>N_1$ and a word   $\om$ such that
   \begin{equation}\label{e55}
\overline{\al_1\ldots \al_{N}}\prec\si^n(\al_1\ldots \al_m \om \nu^\f)\prec \al_1\ldots \al_{N}\quad\textrm{for any}\quad  n\ge 0.
\end{equation}
 Observe that $(\ws_{p_L, N}, \si)$ is an $N$-step subshift of finite type. Note by \eqref{e55} that $\al_1\ldots \al_m\om \nu^N\in\L(\ws_{p_L,N})$. Then by \cite[Theorem 2.1.8]{Lind_Marcus_1995} it follows that for any sequence $(d_i)\in F_{\ws_{p_L, N}}(\nu^N)$ we have $\al_1\ldots \al_m\om d_1d_2\ldots\in F_{\ws_{p_L, N}}(\al_1\ldots\al_m)$. In other words, 
\[
\set{\al_1\ldots\al_m\om d_1d_2\ldots: (d_i)\in F_{\ws_{p_L, N}}(\nu^N)}\subseteq F_{\ws_{p_L, N}}(\al_1\ldots\al_m)\subseteq \ws_{p_L,N}.
\] 
So,
\[
h_{top}(F_{\ws_{p_L, N}}(\nu^N))\le h_{top}(F_{\ws_{p_L, N}}(\al_1\ldots \al_m))\le h_{top}(\ws_{p_L,N}).
\]
Therefore, by Lemma \ref{l55} (i) we obtain
\begin{equation}
  \label{e56}
  h_{top}(F_{\ws_{p_L,N}}(\al_1\ldots\al_m))=h_{top}(\ws_{p_L,N}).
\end{equation}

Let $\La_N$ be the set of sequences $(a_i)\in\set{0,1,\ldots,M}^\f$ satisfying
\[
a_1\ldots a_{mN}=(\al_1\ldots \al_m)^{N} \quad\textrm{and}\quad a_{mN+ 1}a_{mN+ 2}\ldots \in F_{\ws_{p_L,N}}(\al_1\ldots\al_m).
\]
Fix $\de>0$. We claim that
\[\La_N\subseteq \BS_\de(p_L)=\set{\al(q): q\in\OB\cap(p_L-\de, p_L+\de)}\]
 for all sufficiently large integers $N>N_1$.

 Clearly, when $N$ increases  the  length of the common prefix  of sequences in $\La_N$ grows, and it coincides with a prefix of $\al(p_L)=(\al_1\ldots \al_m)^\f$. So, by Lemmas \ref{l21} and \ref{l220} it suffices to show that for all $N>N_1$ any sequence $(a_i)\in\La_N$  is  irreducible.

Take $N>N_1$   and     $(a_i)\in\La_N$.  First we claim  that
\begin{equation}\label{e57}
\overline{\al_1\ldots \al_{ N}}\prec \si^n((a_i))\prec \al_1\ldots \al_{ N} \qquad \textrm{for any }n\ge 1.
\end{equation}
Observe that
$
a_1\ldots a_{mN} =(\al_1\ldots\al_m)^N$ {and} the tails $a_{mN+ 1}a_{mN +2}\ldots \in F_{\ws_{p_L,N}}(\al_1\ldots\al_m).
$
Since $N>N_1>m$,  (\ref{e57})   follows directly from Lemma \ref{l53}.

Note by   the definition of $\Lambda_N$ that $a_1\ldots a_N=\al_1\ldots \al_N$. By \eqref{e57}  it follows that
$(a_i)\in\vs$.
So,  by Definition \ref{def:irreducible}  it remains to prove that
 \begin{equation}\label{e58}
 a_1\ldots a_j(\overline{a_1\ldots a_j}\,^+)^\f\prec (a_i)\qquad\textrm{whenever}\quad (a_1\ldots a_j^-)^\f\in\vs.
 \end{equation}
  We split the proof of (\ref{e58}) into the following three cases.
 \begin{itemize}

 \item For $1\le j\le m$,  (\ref{e58}) follows from (\ref{e54}).

 \item  For $m<  j\le N$,
 let $j=j_1  m +r_1$ with $j_1\ge 1$ and  $r_1\in\set{1,2,\ldots, m}$.
  Since $(\al_1\ldots\al_j^-)^\f=((\al_1\ldots\al_m)^{j_1}\al_1\ldots \al_{r_1}^-)^\f\in\vs$, we have
\[
\al_{r_1+1}\ldots \al_m\al_1\ldots\al_{r_1}\succ\al_{r_1+1}\ldots \al_m\al_1\ldots\al_{r_1}^-\lge\overline{\al_1\ldots\al_m}.
\]
This   implies that
\begin{align*}
a_1\ldots a_j(\overline{a_1\ldots a_j}\,^+)^\f&=(\al_1\ldots \al_m)^{j_1}\al_1\ldots \al_{r_1}\overline{\al_1\ldots \al_m}\ldots\\
&\prec (\al_1\ldots \al_m)^{j_1}\al_1\ldots\al_{r_1}\al_{r_1+1}\ldots \al_m\al_1\ldots \al_{r_1}0^\f\\
&\lle\;(a_i).
\end{align*}

  \item For $j> N$,   by (\ref{e57}) it follows that
 \[
(\overline{a_1\ldots a_j}\,^+)^\f=(\overline{\al_1\ldots\al_N a_{N+1}\ldots a_j}^+)^\f\prec a_{j+1}a_{j+2}\ldots,
 \]
which  implies that  (\ref{e58}) also holds in this case.
 \end{itemize}
 Therefore, $(a_i)$ is an irreducible  sequence, and thus $(a_i)\in \BS_\de(p_L)$. So,  $\La_N\subseteq \BS_{\de}(p_L)$ for all  $N>N_1$.

 Note that $\pi_{p_L}(\La_N)$ is a scaling copy  of $\pi_{p_L}(F_{\ws_{p_L,N}}(\al_1\ldots\al_m))$ which is related to  a graph-directed set satisfying the open set condition (cf.~\cite[Lemma 3.2]{Komornik_Kong_Li_2015_1}).  By Proposition \ref{p41} and  (\ref{e56}) it follows that for any $\ep>0$ there exists  $\de>0$ such that
 \begin{align*}
 \dim_H(\OB\cap(p_L-\de, p_L+\de))&\ge (1-\ep)\dim_H\pi_{p_L}(\BS_\de(p_L))\\
 &\ge (1-\ep)\dim_H\pi_{p_L}(\La_N)\\
 &=(1-\ep)\frac{h_{top}(F_{\ws_{p_L,N}}(\al_1\ldots\al_m))}{\log p_L}\\
 &=(1-\ep)\frac{h_{top}(\ws_{p_L,N})}{\log p_L}
 \end{align*}
 for all sufficiently large integers  $N>N_1$.  Letting $N\ra\f$  we conclude by Lemmas \ref{l54}  and  \ref{l24}   that
 \[
 \dim_H(\OB\cap(p_L-\de, p_L+\de))\ge (1-\ep)\frac{h_{top}(\vs_{p_L})}{\log p_L}=(1-\ep)\dim_H\u_{p_L}.
 \]
Since $\ep>0$ was taken arbitrarily,  this establishes the lemma.
\end{proof}

Now we prove the lower bound of Theorem \ref{t12}  for $q\in(q_{KL}, q_T)$ being the left endpoint of an entropy plateau.
   \begin{lemma}
  \label{l57} 
  Let $[p_L, p_R]\subset(q_{KL}, q_T)$ be a plateau of $H$. Then for any $\de>0$ we have
  \[
  \dim_H\OB\cap(p_L-\de, p_L+\de)\ge \dim_H\u_{p_L}.
  \]
\end{lemma}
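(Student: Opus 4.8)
The plan is to follow the proof of Lemma~\ref{l56} almost verbatim, replacing the tools available in the transitive case by their counterparts for plateaus inside $(q_{KL},q_T)$. By Lemma~\ref{l29}~(i) we may write $\al(p_L)=(\al_i)=(\al_1\ldots\al_m)^\f$, where $(\al_1\ldots\al_m)^\f$ is $*$-irreducible of minimal period $m$; fix the integer $n$ with $\xi(n+1)\lle\al(p_L)\prec\xi(n)$ and let $\ell_0$ be the corresponding threshold in Definition~\ref{def:*-irreducible} ($\ell_0=2^n$ if $M=2k$, and $\ell_0=2^{n+1}$ if $M=2k+1$). First I would fix a large integer $N_1>m$ such that the $*$-irreducibility inequalities of $\al(p_L)$ in the range $\ell_0<j\le m$ are already witnessed inside the prefix of length $N_1$, i.e.
\[
\al_1\ldots\al_j(\overline{\al_1\ldots\al_j}\,^+)^\f\prec \al_1\ldots\al_{N_1}\qquad\textrm{whenever}\quad (\al_1\ldots\al_j^-)^\f\in\vs\ \textrm{and}\ \ell_0<j\le m .
\]
Then, with $\nu$ the word from Lemma~\ref{l55}~(ii), I would use Lemma~\ref{l29}~(iii) to produce a word $\om$ and an integer $N>N_1$ with $\overline{\al_1\ldots\al_N}\prec\si^j(\al_1\ldots\al_m\om\nu^\f)\prec\al_1\ldots\al_N$ for all $j\ge0$, enlarging $N$ further so that $h_{top}(\ws_{p_L,N})>0$ (by Lemma~\ref{l54}).

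The second step anchors the entropy of $\ws_{p_L,N}$ at a prefix of $\al(p_L)$. Since $(\ws_{p_L,N},\si)$ is an $N$-step subshift of finite type and $\al_1\ldots\al_m\om\nu^N\in\L(\ws_{p_L,N})$ by the displayed inequality, \cite[Theorem~2.1.8]{Lind_Marcus_1995} yields
\[
\set{\al_1\ldots\al_m\om\, d_1d_2\ldots:\ (d_i)\in F_{\ws_{p_L,N}}(\nu^N)}\subseteq F_{\ws_{p_L,N}}(\al_1\ldots\al_m),
\]
so $h_{top}(F_{\ws_{p_L,N}}(\nu^N))\le h_{top}(F_{\ws_{p_L,N}}(\al_1\ldots\al_m))\le h_{top}(\ws_{p_L,N})$; with Lemma~\ref{l55}~(ii) this forces $h_{top}(F_{\ws_{p_L,N}}(\al_1\ldots\al_m))=h_{top}(\ws_{p_L,N})$. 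Exactly as in Lemma~\ref{l56} I would then let $\La_N$ consist of the sequences $(a_i)$ with $a_1\ldots a_{mN}=(\al_1\ldots\al_m)^N$ and $a_{mN+1}a_{mN+2}\ldots\in F_{\ws_{p_L,N}}(\al_1\ldots\al_m)$, so that $\pi_{p_L}(\La_N)$ is a rescaled copy of $\pi_{p_L}(F_{\ws_{p_L,N}}(\al_1\ldots\al_m))$, which is related to a graph-directed set satisfying the open set condition, whence $\dim_H\pi_{p_L}(\La_N)=h_{top}(\ws_{p_L,N})/\log p_L$ by \cite[Lemma~3.2]{Komornik_Kong_Li_2015_1}.

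The heart of the proof is to show $\La_N\subseteq\BS_\de(p_L)$ for all sufficiently large $N$. Since the common prefix $(\al_1\ldots\al_m)^N$ agrees with a prefix of $\al(p_L)$ of length tending to $\f$, Lemma~\ref{l21} places the associated base in $(p_L-\de,p_L+\de)$, so by Lemma~\ref{l220} it is enough to show that every $(a_i)\in\La_N$ is $*$-irreducible. Arguing for $(a_i)$ exactly as for \eqref{e57} via Lemma~\ref{l53}, I would first establish $\overline{\al_1\ldots\al_N}\prec\si^j((a_i))\prec\al_1\ldots\al_N$ for all $j\ge1$; this gives $(a_i)\in\vs$, shows $(a_i)$ does not end in $0^\f$ so that $(a_i)=\al(q)$ for a unique $q$ by Lemma~\ref{l21}, and---because $\al(p_L)$ lies strictly between the eventually periodic sequences $\xi(n+1)$ and $\xi(n)$---also gives $\xi(n+1)\lle(a_i)\prec\xi(n)$ once $mN$ exceeds the finite positions where these strict inequalities are decided. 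It then remains to check
\[
a_1\ldots a_j(\overline{a_1\ldots a_j}\,^+)^\f\prec(a_i)\qquad\textrm{whenever}\quad (a_1\ldots a_j^-)^\f\in\vs\ \textrm{and}\ j>\ell_0 ,
\]
which I would split into the ranges $\ell_0<j\le m$ (which reduces to the $*$-irreducibility of $\al(p_L)$ by the choice of $N_1$), $m<j\le N$ (handled by the second inequality of Lemma~\ref{l53}, as in the middle case of the proof of \eqref{e58}), and $j>N$ (handled by the tail inequality $\si^j((a_i))\succ\overline{\al_1\ldots\al_N}$ just proved, as in the last case of \eqref{e58}). This yields $\La_N\subseteq\BS_\de(p_L)$.

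To conclude, Proposition~\ref{p41} together with $\La_N\subseteq\BS_\de(p_L)$ gives, for every $\ep>0$, some $\de>0$ with
\[
\dim_H(\OB\cap(p_L-\de,p_L+\de))\ \ge\ (1-\ep)\dim_H\pi_{p_L}(\BS_\de(p_L))\ \ge\ (1-\ep)\,\frac{h_{top}(\ws_{p_L,N})}{\log p_L}
\]
for all large $N$; letting $N\ra\f$, applying Lemma~\ref{l54} (equivalently, using that the transitive subshifts $X_N\subseteq\ws_{p_L,N}$ of Lemma~\ref{l55}~(ii) satisfy $h_{top}(X_N)=h_{top}(\ws_{p_L,N})\to h_{top}(X_{p_L})=h_{top}(\vs_{p_L})$) and then Lemma~\ref{l24}, we obtain $\dim_H(\OB\cap(p_L-\de,p_L+\de))\ge(1-\ep)\dim_H\u_{p_L}$, and $\ep\to0$ finishes the proof. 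The step I expect to be the main obstacle is the $*$-irreducibility verification for $\La_N$: unlike the purely periodic, irreducible setting of Lemma~\ref{l56}, one must keep careful track of the threshold $\ell_0$ and confirm that the extra freedom in the tails of the elements of $\La_N$ never creates a reducibility violation; a secondary technical point is the entropy-convergence input for the non-transitive subshift $\vs_{p_L}$, which rests on the unique-transitive-subshift-of-finite-type structure of Lemma~\ref{l29}~(ii).
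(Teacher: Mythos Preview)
Your proposal is correct and follows essentially the same route as the paper: the paper defines the identical set (there denoted $\Delta_N$ rather than $\La_N$), uses Lemma~\ref{l29}~(iii) and Lemma~\ref{l55}~(ii) to obtain $h_{top}(F_{\ws_{p_L,N}}(\al_1\ldots\al_m))=h_{top}(\ws_{p_L,N})$, verifies $*$-irreducibility of the sequences in $\Delta_N$ via the same three-range case split $\ell_0<j\le m$, $m<j\le N$, $j>N$, and concludes via Proposition~\ref{p41}, Lemma~\ref{l54} and Lemma~\ref{l24}. Your two flagged concerns are exactly the points the paper handles: for the $*$-irreducibility, the paper first records $m>\ell_0$ (from $\xi(n+1)\lle\al(p_L)\prec\xi(n)$ and \eqref{eq:lambda}) and fixes an $N_0$ witnessing the strict inequalities $\xi(n+1)\prec\al_1\ldots\al_{N_0}\prec\xi(n)$ before proceeding; for the entropy convergence, the paper simply invokes Lemma~\ref{l54} as stated (whose proof in \cite{Komornik_Kong_Li_2015_1} does not require transitivity of $\vs_{p_L}$), so no additional argument via $X_N$ is needed.
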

\begin{proof}
  The proof is similar to that of Lemma \ref{l56}. We only give the proof for $M=2k$, since the proof for $M=2k+1$ is similar.

  By Lemma \ref{l29} (i) it follows that $\al(p_L)=(\al_i)=(\al_1\ldots \al_m)^\f$ is a $*$-irreducible sequence, where $m$ is the  minimal period of $\al(p_L)$. Then  there exists  $n\ge 1$ such that
  $
 \xi(n+1)\lle\al(p_L)\prec \xi(n),
  $
  where $\xi(n)=\la_1\ldots\la_{2^{n-1}}(\overline{\la_1\ldots\la_{2^{n-1}}}\,^+)^\f$. By \eqref{eq:lambda} this implies that $m>2^n$. Since $\al(p_L)=(\al_i)$ is  periodic while $\xi(n+1)$ is eventually periodic, we have $\xi(n+1)\prec \al(p_L)\prec \xi(n)$. So there exists a large integer $N_0$ such that
  \begin{equation}\label{eq:k1}
  \xi(n+1)\prec\al_1\ldots\al_{N_0}\prec\xi(n).
  \end{equation}
Since $\al(p_L)=(\al_i)$ is $*$-irreducible,   by Definition \ref{def:*-irreducible} there exists an integer $N_1>N_0$ such that
  \begin{equation}\label{e59}
    \al_1\ldots\al_j(\overline{\al_1\ldots\al_j}\,^+)^\f\prec \al_1\ldots\al_{N_1}\qquad\textrm{if}\quad (\al_1\ldots\al_j^-)^\f\in\vs\textrm{  and }2^n<j\le m.
  \end{equation}

 Let $\nu=\la_1\ldots\la_{2^n}^-$ be the word defined as in   Lemma \ref{l55} (ii). Then  by Lemma \ref{l29} (iii) there exist a  large integer $N \ge N_1$ and a word $\om$ such that
  \begin{equation}\label{e510}
    \overline{\al_1\ldots\al_{N }}\prec \si^j(\al_1\ldots\al_m\om \nu^\f)\prec \al_1\ldots\al_{N }\quad\textrm{for any}\quad j\ge 0.
  \end{equation}
 Observe that $(\ws_{p_L,N}, \si)$ is an $N$-step subshift of finite type. Note by \eqref{e510} that $\al_1\ldots\al_m\om\nu^N\in \L(\ws_{p_L, N})$. Then by  \cite[Theorem 2.1.8]{Lind_Marcus_1995} it follows that for any sequence $(d_i)\in F_{\ws_{p_L, N}}(\nu^N)$ we have $\al_1\ldots \al_m\om d_1d_2\ldots\in F_{\ws_{p_L, N}}(\al_1\ldots\al_m)$. This implies 
\[
\set{\al_1\ldots\al_m\om d_1d_2\ldots: (d_i)\in F_{\ws_{p_L, N}}(\nu^N)}\subseteq F_{\ws_{p_L, N}}(\al_1\ldots \al_m)\subseteq \ws_{p_L,N}.
\]
So, by
Lemma \ref{l55} (ii) we obtain
  \begin{equation}\label{e511}
    h_{top}(F_{\ws_{p_L,N}}(\al_1\ldots\al_m))=h_{top}(\ws_{p_L,N}).
  \end{equation}

  Let $\Delta_N$ be the set of sequences $(a_i)$ satisfying
  \[
  a_1\ldots a_{mN }=(\al_1\ldots\al_m)^N \quad\textrm{and}\quad a_{mN +1}a_{mN +2}\ldots\in F_{\ws_{p_L,N}}(\al_1\ldots\al_m).
  \]
  Fix $\de>0$. Then we claim that
    \[
\Delta_N\subset \BS_\de(p_L)=\set{\al(q): q\in\OB\cap(p_L-\de, p_L+\de)}
  \]
  for all sufficiently large integers $N>N_1.$
  Observe that the common prefix of sequences in $\Delta_N$ has length at least $m(N+1)$ and it coincides with a prefix of $\al(p_L)=(\al_1\ldots\al_m)^\f$. So, by Lemmas \ref{l21} and \ref{l220} it suffices to show that for all integers    $N>N_1$ any sequence in $\Delta_N$ is $*$-irreducible.

  Take $N>N_1$ sufficiently large and take $(a_i)\in\Delta_N$. Then by \eqref{eq:k1} we have $\xi(n+1)\prec(a_i)\prec\xi(n)$. Furthermore, by Lemma \ref{l53}  and the definition of $\Delta_N$ it follows that
  \begin{equation}
  \label{eq:512}
  \overline{a_1\ldots a_N}\prec \si^j((a_i))\prec a_1\ldots a_N\quad\textrm{for any}\quad j\ge 1.
  \end{equation}
  This implies that $(a_i)\in\vs$. Furthermore, by \eqref{e59}, \eqref{eq:512} and   arguments similar to those in
   the proof of Lemma \ref{l56} we can prove that
\[
a_1\ldots a_j(\overline{a_1\ldots a_j}\,^+)^\f\prec (a_i)
\]
whenever $j>2^n$ and $(a_1\ldots a_j^-)^\f\in\vs$. Therefore, by Definition \ref{def:*-irreducible} the sequence $(a_i)$ is $*$-irreducible, and then  $\Delta_N\subset \BS_\de(p_L)$ for all $N>N_1$, proving the claim.

Hence, by Proposition \ref{p41} and \eqref{e511} it follows that for any $\ep>0$ there exists  $\de>0$ such that
  \begin{align*}
  \dim_H(\OB\cap(p_L-\de, p_L+\de))&\ge(1-\ep)\dim_H\pi_{p_L}(\BS_\de(p_L))\\
  &\ge (1-\ep)\dim_H\pi_{p_L}(\Delta_N)\\
  &=(1-\ep)\frac{h_{top}(F_{\ws_{p_L,N}}(\al_1\ldots \al_m))}{\log p_L}\\
  &=(1-\ep)\frac{h_{top}(\ws_{p_L,N})}{\log p_L}
  \end{align*}
  for all sufficiently large integers $N>N_1$. 
  Letting $N\ra\f$ we obtain by Lemmas \ref{l54}  and \ref{l24} that
  \[
  \dim_H(\OB\cap(p_L-\de, p_L+\de))\ge (1-\ep)\frac{h_{top}(\vs_{p_L})}{\log p_L}=(1-\ep)\dim_H\u_{p_L}.
  \]
  Since $\ep>0$ was arbitrary, we complete the proof by letting $\ep\ra 0$.
\end{proof}

\begin{proof}
  [Proof of Theorem \ref{t12}]
 Take $q\in\OB$ and $\de>0$.  By Lemma \ref{l27}  there exists a sequence of plateaus $\set{[p_L(n), p_R(n)]}$ such that $p_L(n)$ converges to $q$ as $n\ra\f$. By Lemmas \ref{l56} and \ref{l57} it follows that
   \[
   \dim_H(\OB\cap(q-\de, q+\de))\ge \dim_H\u_{p_L(n)}
   \]
   for all sufficiently large $n$. Letting $n\ra\f$ and by Lemma \ref{l24}  we obtain that
   \begin{equation}\label{e513}
     \dim_H(\OB\cap(q-\de, q+\de))\ge \dim_H\u_{q}.
   \end{equation}
   Therefore, the theorem follows from \eqref{e513} and Proposition \ref{p52}.
\end{proof}

\section{Dimensional spectrum  of $\ub$}\label{sec:proof of th1}
Recall that $\ub$ is the set of univoque bases $q\in(1,M+1]$ for which $1$ has a unique $q$-expansion. In this section we will  use Theorem \ref{t12} to prove Theorem \ref{t14} for the dimensional spectrum of $\ub$, which states that
\[  \dim_H(\ub\cap(1, t])=\max_{q\le t} \dim_H\u_q \quad \text{ for all }~ t > 1. \]
We focus on $t \in (q_{KL}, M+1)$, since by Lemma \ref{l24}  the other cases are trivial.

 Since the proof of Lemma \ref{l43} above only uses properties of $\overline{\ub}$ instead of ${\OB}$, the proof also gives the following lemma for the set $\overline{\ub}$.
 \begin{lemma}
 \label{l61}
 Let $q\in\overline{\ub}\setminus\set{M+1}$. Then for any $\ep>0$ there exists a $\de>0$ such that
 \[
 \dim_H(\overline{\ub}\cap(q-\de, q+\de))\le (1+\ep)\dim_H\pi_{q+\de}(\us_\de(q)),
 \]
 where $\us_{\de}(q)=\set{\al(p): p\in\overline{\ub}\cap(q-\de, q+\de)}$.
 \end{lemma}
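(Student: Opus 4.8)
The plan is to observe that the proof of Lemma~\ref{l43} applies almost word for word, because the hypothesis $q\in\OB$ was used there only through the inclusion $\OB\subseteq\overline{\ub}$, which served solely to place every base $p$ under consideration in $\overline{\ub}$. Here that membership is part of the hypothesis, so every estimate in the proof of Lemma~\ref{l43} remains available and no new idea is needed.

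Concretely, I would first fix $\ep>0$ and $q\in\overline{\ub}\setminus\set{M+1}$, and choose $\de>0$ with $q-\de>1$, $q+\de<M+1$ and $\log(q+\de)/\log(q-\de)\le 1+\ep$, exactly as in \eqref{e41}. For any $p\in\overline{\ub}\cap(q-\de, q+\de)$, Lemma~\ref{l23}(ii) together with Lemma~\ref{l21} gives
\[
\overline{\al(q+\de)}\prec\overline{\al(p)}\prec\si^i(\al(p))\lle\al(p)\prec\al(q+\de)\qquad\textrm{for all }i\ge 0,
\]
so by Lemma~\ref{l22} we have $\al(p)\in\us_{q+\de}$; hence $\pi_{q+\de}$ is injective on $\us_\de(q)$, and by Lemma~\ref{l21} the map $p\mapsto\pi_{q+\de}(\al(p))$ is a bijection from $\overline{\ub}\cap(q-\de, q+\de)$ onto $\pi_{q+\de}(\us_\de(q))$. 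Next I would reproduce the two-sided estimate from Lemma~\ref{l43}: for $p_1<p_2$ in $\overline{\ub}\cap(q-\de,q+\de)$, letting $n$ be the first index where $\al(p_1)$ and $\al(p_2)$ differ, the quasi-greedy tail inequality gives $0<p_2-p_1\le p_2^{2-n}$ as in \eqref{e42}, while $p_2\in\overline{\ub}$ (via Lemma~\ref{l23}(ii)) and $\al(p_2)\lle\al(q+\de)\prec M^\f$ yield an integer $N$, depending only on $q+\de$, with $\si^m(\al(p_2))\succ 0^{N-1}10^\f$ for all $m\ge 1$, and therefore $\pi_{q+\de}(\al(p_2))-\pi_{q+\de}(\al(p_1))\ge(q+\de)^{-(n+N)}$. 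Combining these bounds with the choice of $\de$ produces a constant $C>0$, independent of $p_1,p_2$, with
\[
\big|\pi_{q+\de}(\al(p_2))-\pi_{q+\de}(\al(p_1))\big|\ge C\,|p_2-p_1|^{1+\ep}.
\]

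It then remains to apply Lemma~\ref{l42} to the inverse of the bijection $p\mapsto\pi_{q+\de}(\al(p))$, which is H\"older of exponent $1/(1+\ep)$ by the displayed inequality; this yields $\dim_H(\overline{\ub}\cap(q-\de,q+\de))\le(1+\ep)\dim_H\pi_{q+\de}(\us_\de(q))$, as desired. There is no genuine obstacle: the statement is essentially a restatement of Lemma~\ref{l43} with $\OB$ replaced by $\overline{\ub}$. The only point needing (minor) attention is the uniformity of $N$ in the lower estimate, which is fine because $\al(p_2)\lle\al(q+\de)$ and $\al(q+\de)\prec M^\f$ force $N$ to depend only on $q+\de$ and not on $p_2$.
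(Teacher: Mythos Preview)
Your proposal is correct and matches the paper's own approach exactly: the paper simply notes that the proof of Lemma~\ref{l43} only used properties of $\overline{\ub}$ (via the inclusion $\OB\subseteq\overline{\ub}$), so it carries over verbatim with $\OB$ replaced by $\overline{\ub}$. Your additional remark on the uniformity of $N$ is a fair clarification of a point the paper leaves implicit.
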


To prove Theorem \ref{t14} we first consider the upper bound.
\begin{lemma}\label{l62}
For any $t\in(q_{KL}, M+1)$ we have
\[
\dim_H(\overline{\ub}\cap(1, t])\le \max_{q\le t}\dim_H\u_q.
\]
\end{lemma}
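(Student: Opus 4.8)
The plan is to prove the lemma by feeding the local estimate of Lemma~\ref{l61} into a covering argument, using in addition the continuity of the dimension function $D\colon q\mapsto\dim_H\u_q$ (Lemma~\ref{l24}(i) together with Remark~\ref{r25}). First I would upgrade Lemma~\ref{l61} so that its right-hand side reads $\dim_H\u_{q+\de}$ in place of $\dim_H\pi_{q+\de}(\us_\de(q))$. Fix $q\in\overline{\ub}\cap(q_{KL},t]$; since $\overline{\ub}\subseteq[q_{KL},M+1]$ and $\dim_H\u_{q_{KL}}=0$, the endpoint $q=q_{KL}$ causes no difficulty. For $p\in\overline{\ub}\cap(q-\de,q+\de)$ with $p<q+\de$, Lemma~\ref{l21} gives $\al(p)\prec\al(q+\de)$, while Lemma~\ref{l23}(ii) gives $\overline{\al(p)}\prec\si^n(\al(p))\lle\al(p)$ for all $n\ge1$; reflecting the left inequality yields $\overline{\si^n(\al(p))}\prec\al(p)\prec\al(q+\de)$, so by Lemma~\ref{l22} the sequence $\al(p)$ lies in $\us_{q+\de}$, i.e., $\pi_{q+\de}(\al(p))\in\u_{q+\de}$. (This is the same computation already performed in the proof of Proposition~\ref{p52}, with $\OB$ replaced by $\overline{\ub}$.) Hence $\pi_{q+\de}(\us_\de(q))\subseteq\u_{q+\de}$, and, shrinking $\de$ using the continuity of $D$, Lemma~\ref{l61} becomes: for every $\ep>0$ there exists $\de=\de(q,\ep)>0$ with $q+\de<M+1$ and
\[
\dim_H\bigl(\overline{\ub}\cap(q-\de,q+\de)\bigr)\le(1+\ep)\dim_H\u_{q+\de}\le(1+\ep)\Bigl(\max_{q'\le t}\dim_H\u_{q'}+\ep\Bigr),
\]
where the last step uses $\dim_H\u_{q+\de}\le\dim_H\u_q+\ep$ and $q\le t$.

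Next I would globalize. Since $\overline{\ub}$ is closed and $t<M+1$, the set $\overline{\ub}\cap(1,t]=\overline{\ub}\cap[q_{KL},t]$ is compact, and the open intervals $(q-\de(q,\ep),q+\de(q,\ep))$, $q\in\overline{\ub}\cap[q_{KL},t]$, cover it; extract a finite subcover $(q_j-\de_j,q_j+\de_j)$, $j=1,\dots,r$. Then $\overline{\ub}\cap(1,t]\subseteq\bigcup_{j=1}^r\bigl(\overline{\ub}\cap(q_j-\de_j,q_j+\de_j)\bigr)$, so by finite stability of Hausdorff dimension
\[
\dim_H\bigl(\overline{\ub}\cap(1,t]\bigr)\le\max_{1\le j\le r}\dim_H\bigl(\overline{\ub}\cap(q_j-\de_j,q_j+\de_j)\bigr)\le(1+\ep)\Bigl(\max_{q'\le t}\dim_H\u_{q'}+\ep\Bigr),
\]
and letting $\ep\downarrow0$ finishes the proof.

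I do not expect a serious obstacle here: the analytic content is entirely carried by Lemma~\ref{l61} (itself a transcription of the proof of Lemma~\ref{l43}), and the rest is bookkeeping. The one point that deserves attention is the verification that $\al(p)\in\us_{q+\de}$ for $p\in\overline{\ub}$, where one must remember that the reflection $x\mapsto\overline{x}$ reverses the lexicographic order — so that the lower bound $\overline{\al(p)}\prec\si^n(\al(p))$ of Lemma~\ref{l23}(ii) is precisely what is needed to bound $\overline{\si^n(\al(p))}$ from above by $\al(q+\de)$, and thereby to invoke the second condition in Lemma~\ref{l22}.
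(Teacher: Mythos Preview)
Your proposal is correct and follows essentially the same route as the paper: invoke Lemma~\ref{l61}, observe via Lemmas~\ref{l22} and~\ref{l23} that $\pi_{q+\de}(\us_\de(q))\subseteq\u_{q+\de}$, combine with the continuity of $D$ from Lemma~\ref{l24}, then cover the compact set $\overline{\ub}\cap[q_{KL},t]$ by finitely many such intervals and let $\ep\to0$. The paper carries out exactly these steps in the same order.
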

\begin{proof}
Fix $\ep>0$, and take $t\in(q_{KL}, M+1)$. Then it suffices to prove
\begin{equation}\label{e61}
\dim_H(\overline{\ub}\cap(1, t])\le (1+\ep)(\max_{q\le t}\dim_H\u_q+\ep).
\end{equation}

By Lemmas \ref{l24} and \ref{l61} it follows that for each $q\in\overline{\ub}\cap(1, t]$ there exists a sufficiently small $\de=\de(q, \ep)>0$ such that
\begin{equation}\label{e62}
\begin{split}
 \dim_H\u_{q+\de}&\le  \dim_H\u_q+\ep,\\
\dim_H(\overline{\ub}\cap(q-\de, q+\de))&\le (1+\ep)\dim_H\pi_{q+\de}(\us_\de(q)).
\end{split}
\end{equation}
Observe that $\set{(q-\de, q+\de): q\in\overline{\ub}\cap(1, t]}$ is an open cover of $\overline{\ub}\cap(1, t]$, and that $\overline{\ub}\cap(1, t]=\overline{\ub}\cap[q_{KL}, t]$ is a compact set. Hence, there exist  $q_1, q_2,\ldots, q_N$ in $\overline{\ub}\cap(1, t]$ such that
\begin{equation}\label{e63}
\overline{\ub}\cap(1, t]\subseteq\bigcup_{i=1}^N\big(\overline{\ub}\cap(q_i-\de_i, q_i+\de_i)\big),
\end{equation}
where $\de_i=\de(q_i, \ep)$ for $1\le i\le N$.

Note by Lemmas \ref{l22} and \ref{l23}  that for each $i\in\set{1, 2,\ldots, N}$ we have
\[\pi_{q_i+\de_i}(\us_{\de_i}(q_i))=\pi_{q_i+\de_i}(\set{\al(p): p\in\overline{\ub}\cap(q_i-\de_i, q_i+\de_i)})\subseteq\u_{q_i+\de_i}.\]
 Then by (\ref{e62}) and (\ref{e63}) it follows that
\begin{align*}
\dim_H(\overline{\ub}\cap(1, t])&\le \dim_H\left(\bigcup_{i=1}^N\big(\overline{\ub}\cap(q_i-\de_i, q_i+\de_i)\big)\right)\\
&=\max_{1\le i\le N}\dim_H(\overline{\ub}\cap(q_i-\de_i, q_i+\de_i))\\
&\le (1+\ep)\max_{1\le i\le N}\dim_H\pi_{q_i+\de_i}(\us_{\de_i}(q_i))\\
&\le (1+\ep)\max_{1\le i\le N}\dim_H\u_{q_i+\de_i}\\
&\le (1+\ep)\max_{1\le i\le N}(\dim_H\u_{q_i}+\ep)\\
&\le(1+\ep)(\max_{q\le t}\dim_H\u_q+\ep).
\end{align*}
This proves (\ref{e61}), and completes the proof.
\end{proof}

The next lemma gives the lower bound of Theorem \ref{t14}.
\begin{lemma}\label{l63}
For any $t\in(q_{KL}, M+1)$ we have
\[
\dim_H(\overline{\ub}\cap(1, t])\ge \max_{q\le t}\dim_H\u_q.
\]
\end{lemma}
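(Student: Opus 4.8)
The plan is to derive this lower bound from Theorem~\ref{t12} and the inclusion $\OB\subseteq\overline{\ub}$ (Lemma~\ref{l26}). The first observation is that for fixed $q$ the quantity $\dim_H(\OB\cap(q-\de,q+\de))$ is non-decreasing in $\de$, so for $q\in\OB$ Theorem~\ref{t12} yields $\dim_H(\OB\cap(q-\de,q+\de))\ge\dim_H\u_q$ for \emph{every} $\de>0$. Hence, given any $q\in\OB\cap(1,t)$, one chooses $\de>0$ so small that $(q-\de,q+\de)\subseteq(1,t)$ and obtains
\[
\dim_H(\overline{\ub}\cap(1,t])\ge\dim_H\big(\OB\cap(q-\de,q+\de)\big)\ge\dim_H\u_q .
\]
So it suffices to show $\sup\set{\dim_H\u_q: q\in\OB\cap(1,t)}=\max_{q\le t}\dim_H\u_q$.

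If the right-hand side is $0$ there is nothing to prove; otherwise, since the function $D:q\mapsto\dim_H\u_q$ is continuous on $(1,M+1]$ and vanishes on $(1,q_{KL}]$ (Lemma~\ref{l24}, Remark~\ref{r25}), its maximum over $(1,t]$ is attained at some $q^*\in(q_{KL},t]$. The key structural step is to check that $q^*\in\OB$. Indeed, if $q^*$ were in the interior of, or the right endpoint of, a plateau $[p_L,p_R]$ of $H$, then — since $H$ is constant and positive on $[p_L,p_R]$ while $\log$ is strictly increasing — the function $D=H/\log q$ would be strictly decreasing on $[p_L,p_R]$, so $D(p_L)>D(q^*)$; but $p_L<q^*\le t$, contradicting maximality of $D(q^*)$. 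Therefore $q^*$ is either in $\B$ or is the left endpoint of a plateau, so $q^*\in\OB$ and in fact $q^*\in(q_{KL},M+1]\setminus\bigcup(p_L,p_R]$.

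The remaining issue is that the reduction in the first paragraph requires $q^*<t$, which need not hold. To cover $q^*=t$ as well, I would apply Lemma~\ref{l27}(i) to obtain a sequence of plateaus $\set{[p_L(n),p_R(n)]}$ with $p_L(n)\nearrow q^*$; each $p_L(n)$ is the left endpoint of a plateau, hence lies in $\OB\cap(q_{KL},t)$, so the first paragraph gives $\dim_H(\overline{\ub}\cap(1,t])\ge\dim_H\u_{p_L(n)}$ for every $n$. Letting $n\ra\f$ and using the continuity of $D$ then yields $\dim_H(\overline{\ub}\cap(1,t])\ge\dim_H\u_{q^*}=\max_{q\le t}\dim_H\u_q$, completing the argument.

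The main obstacle is exactly this endpoint case $q^*=t$: the maximiser of $D$ on $(1,t]$ may sit at $t$ itself, where no symmetric neighbourhood fits inside $(1,t]$. It is resolved by the fact that such a $q^*$ is still approached from the left by left endpoints of plateaus, which Lemma~\ref{l27}(i) supplies once we have ruled out — again via the strict monotonicity of $D$ on plateaus — that $q^*$ is an interior point or a right endpoint of a plateau. Everything else (monotonicity of $\dim_H$ in the radius, continuity of $D$, and $\OB\subseteq\overline{\ub}$) is routine, and Theorem~\ref{t12} is used only as a black box.
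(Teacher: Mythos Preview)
Your proposal is correct and follows essentially the same route as the paper's proof: locate a maximiser $q^*$ of $D$ on $(1,t]$, argue via the plateau structure that $q^*\in(q_{KL},M+1]\setminus\bigcup(p_L,p_R]\subseteq\OB$, invoke Theorem~\ref{t12} (together with $\OB\subseteq\overline{\ub}$) when $q^*<t$, and handle $q^*=t$ by Lemma~\ref{l27}(i) plus continuity of $D$. Your justification that $q^*\notin\bigcup(p_L,p_R]$ via the strict decrease of $D=H/\log q$ on each plateau is slightly more explicit than the paper's one-line appeal to Lemma~\ref{l24}, but the logic is identical.
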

\begin{proof}
Take $t\in(q_{KL}, M+1)$.
Note by Lemma \ref{l24}   that the dimension function  $D: q\mapsto \dim_H\u_q$  is continuous. Then there exists  $q_*\in[q_{KL}, t]$ such that
\begin{equation*}
\dim_H\u_{q_*}=\max_{q\le t}\dim_H\u_q.
\end{equation*}
Since the entropy function $H$ is locally constant on the complement of $\B$, it follows by Lemma \ref{l24} that
\[
q_*\in(q_{KL}, t]\setminus\bigcup(p_L, p_R]\subseteq(q_{KL}, t]\cap\OB.
\]
If $q_*\in(q_{KL}, t)\cap\OB$, then the lemma follows by $\OB\subset\overline{\ub}$ and  Theorem \ref{t12}. If $q_*=t$, then  by Lemma \ref{l27} (i) there exists a sequence of plateaus $\set{[p_L(n), p_R(n)]}$ such that $p_L(n)\in(q_{KL}, t)\cap\OB$ and $p_L(n)\nearrow q_*$ as $n\ra\f$. Therefore, by Lemma \ref{l24} and Theorem \ref{t12} we also have 
\[
\dim_H(\overline{\ub}\cap(1, t])\ge\dim_H(\OB\cap(q_{KL}, t])\ge \dim_H\u_{p_L(n)}\;\ra\dim_H\u_{q_*}
\]
as $n\ra\f$. 
This establishes the lemma.
\end{proof}

\begin{proof}[Proof of Theorem \ref{t14}]
For $1 < t \le q_{KL}$ we have $\ub \cap (1,t] \subseteq\set{q_{KL}}$ and thus by Lemma \ref{l24} (i) it follows that
\[ \dim_H ( \ub \cap (1,t]) = 0 = \max_{q \le t} \dim_H \u_q.\]
For $t \ge M+1$ we have $\ub=\ub \cap (1,t]$ and the result also follows from Lemma \ref{l24}. For the remaining $t$ the result follows from Lemmas \ref{l62} and \ref{l63}, since $\overline{\ub}\backslash \ub$ is countable.

From Lemma \ref{l24} it follows that the dimension function $D: q \mapsto \dim_H\u_q$ has a Devil's staircase behavior (see also Remark \ref{r25} (1)). This implies that $\phi(t):=\max_{q\le t}\dim_H\u_q$ is a Devil's staircase in $(1, \f)$: {(i) $\phi$ is non-decreasing and continuous in $(1, \f)$; (ii) $\phi$ is locally constant almost everywhere in $(1, \f)$; and (iii) $\phi(q_{KL})=0$, and   $\phi(t)>0$ for any $t>q_{KL}$. }
\end{proof}

\section{Variations of $\ub(M)$}\label{sec:proof of th2}

For any $K \in \{0,1, \ldots, M\}$, let $\ub(K)$ denote the set of bases $q>1$ such that $1$ has a unique $q$-expansion over the alphabet $\set{0,1,\ldots,K}$. Then $\ub(K)\subset(1, K+1]$.  In this section we investigate the Hausdorff dimension of the intersection $\bigcap_{J=K}^M\ub(J)$, and prove Theorem \ref{t16}.  
Note that $q_{KL}=q_{KL}(M)$ is the smallest element of $\ub(M)$, and $K+1$ is the largest element  of $\ub(K)$. So, if $K+1<q_{KL}$ then $\ub(M)\cap\ub(K)=\emptyset$. Therefore, in the following we  assume $K\in[q_{KL}-1, M]$.
\begin{lemma}
  \label{l72}
Let $K\in[q_{KL}-1, M]$ be an integer. Then for each $q\in\ub(M)\cap(1,K+1]$ the unique expansion $\al(q)=(\al_i(q))$ satisfies
  \[
  M-K\le \al_i(q)\le K\qquad\textrm{for any}\quad i\ge 1.
  \]
\end{lemma}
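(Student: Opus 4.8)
The plan is to read off both inequalities from the lexicographic description of univoque bases in Lemma~\ref{l23}(i), combined with the monotonicity of $q\mapsto\al(q)$ from Lemma~\ref{l21}. First I would isolate the degenerate case: if $q=M+1$, then $q\le K+1$ forces $K=M$, and the assertion $0\le\al_i(q)\le M$ is automatic. So from now on assume $q\in\ub(M)\cap(1,K+1]$ with $q\ne M+1$, so that Lemma~\ref{l23}(i) applies and gives $\overline{\al(q)}\prec\si^n(\al(q))\prec\al(q)$ for every $n\ge 1$. (The standing hypothesis $K\ge q_{KL}-1$ is merely the condition $q_{KL}\le K+1$ that makes $\ub(M)\cap(1,K+1]$ nonempty, and is not used again; note also $K\ge 1$ since $K\ge q_{KL}-1>0$.)

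For the upper bound $\al_i(q)\le K$, the key preliminary is the identification $\al(K+1)=K^\f$: one checks that $\pi_{K+1}(K^\f)=K/(K+1-1)=1$, that $K^\f$ does not end in $0^\f$, and that it satisfies the admissibility condition of Lemma~\ref{l21} (the required inequality $\si^n(K^\f)\preceq K^\f$ is an equality), so by the bijection of Lemma~\ref{l21} it is the quasi-greedy expansion of $1$ in base $K+1$. Since $q\le K+1$, Lemma~\ref{l21} then gives $\al(q)\preceq K^\f$, hence $\al_1(q)\le K$. Now suppose for contradiction that some digit exceeds $K$, and let $n$ be the least index with $\al_n(q)\ge K+1$; then $n\ge 2$, and $\si^{n-1}(\al(q))$ starts with $\al_n(q)\ge K+1>K\ge\al_1(q)$, so $\si^{n-1}(\al(q))\succ\al(q)$, contradicting Lemma~\ref{l23}(i) applied with the index $n-1\ge 1$. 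Hence $\al_i(q)\le K$ for all $i$.

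For the lower bound $\al_i(q)\ge M-K$ I would use the left-hand inequality $\overline{\al(q)}\prec\si^n(\al(q))$: comparing first symbols in $(M-\al_1(q))(M-\al_2(q))\cdots\prec\al_{n+1}(q)\al_{n+2}(q)\cdots$ gives $\al_{n+1}(q)\ge M-\al_1(q)$ for every $n\ge 1$. Combined with the upper bound $\al_1(q)\le K$ just proved, this yields $\al_j(q)\ge M-\al_1(q)\ge M-K$ for all $j\ge 2$; and taking $n=1$ gives $\al_2(q)\ge M-\al_1(q)$, which together with $\al_2(q)\le K$ rearranges to $\al_1(q)\ge M-K$. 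This completes the argument. The lemma is not deep: the only step that is not a purely mechanical extraction of a first-digit inequality from the lexicographic order is the identification $\al(K+1)=K^\f$ (and the trivial bookkeeping for $q=M+1$, where Lemma~\ref{l23}(i) is not available), so I do not anticipate a real obstacle.
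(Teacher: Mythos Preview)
Your proof is correct and follows essentially the same route as the paper: identify $\al(K+1)=K^\f$ to obtain $\al_1(q)\le K$, then use the two-sided inequality of Lemma~\ref{l23}(i) to propagate the bounds $M-\al_1(q)\le\al_i(q)\le\al_1(q)$ to all digits. The only cosmetic differences are that the paper splits off the degenerate case $K=M$ (which absorbs your $q=M+1$ case since $K<M$ forces $q\le K+1<M+1$), and that the paper derives $\al_1(q)\ge M-K$ via the Komornik--Loreti constant (using $\al_1(q)\ge\al_1(q_{KL})\ge M-\al_1(q_{KL})$ together with $\al_1(q_{KL})\le K$) whereas you obtain it more directly from $\al_1(q)\ge M-\al_2(q)\ge M-K$; your variant is slightly cleaner but not substantively different.
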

\begin{proof}
Clearly, the lemma holds if $K=M$. So we assume $K<M$. Take $q\in\ub(M)\cap(1,K+1]\subseteq[q_{KL}, K+1]$. Then
    \[
    \al(q_{KL})\preceq \al(q)\preceq \al(K+1)=K^\f.
    \]
This, together with $\al_1(q_{KL})\ge M-\al_1(q_{KL})$, implies that
\[
     M-K\le \al_1(q_{KL})\le\al_1(q)\le K.
\]
Since $M>K$ and $q\in\ub(M)$, it follows from Lemma \ref{l23} (i) that
   \[M-K\le M-\al_1(q)\le \al_i(q)\le\al_1(q)\le K\qquad\textrm{for any}\quad i\ge 1.\]
   This completes the proof.
\end{proof}

\begin{lemma}
  \label{l73}
  Let $K\in[q_{KL}-1, M]$ be an integer. Then
  \[\ub(M)\cap\ub(K)=(1,K+1]\cap\ub(M).\]
\end{lemma}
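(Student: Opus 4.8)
The plan is to prove the two inclusions separately. The inclusion $\ub(M)\cap\ub(K)\subseteq(1,K+1]\cap\ub(M)$ is immediate: by definition $\ub(K)\subseteq(1,K+1]$, so every $q\in\ub(M)\cap\ub(K)$ already lies in $(1,K+1]\cap\ub(M)$.

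For the reverse inclusion I would take an arbitrary $q\in(1,K+1]\cap\ub(M)$ and show $q\in\ub(K)$. Since $q\in\ub(M)$ we have $\al(q)\in\us_q$, so the quasi-greedy expansion $\al(q)$ is the \emph{unique} $q$-expansion of $1$ with digits in $\set{0,1,\ldots,M}$. By Lemma \ref{l72} each digit satisfies $M-K\le\al_i(q)\le K$, and since $K\le M$ this forces $\al_i(q)\in\set{0,1,\ldots,K}$ for all $i$. Hence $\al(q)$ is in particular a $q$-expansion of $1$ over the alphabet $\set{0,1,\ldots,K}$, so $1$ admits at least one $q$-expansion over this smaller alphabet. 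Conversely, any $q$-expansion of $1$ over $\set{0,1,\ldots,K}$ is also a $q$-expansion of $1$ over $\set{0,1,\ldots,M}$, hence by the uniqueness noted above it must equal $\al(q)$. Therefore $1$ has a unique $q$-expansion over $\set{0,1,\ldots,K}$, and combined with $q\in(1,K+1]$ this gives $q\in\ub(K)$, completing the proof.

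I do not expect a genuine obstacle: the whole argument reduces to Lemma \ref{l72}, which confines the digits of $\al(q)$ to $\set{0,1,\ldots,K}$, together with the elementary fact that uniqueness of an expansion is inherited when one restricts to a subalphabet, once one checks that an expansion over the subalphabet still exists. The only point deserving a moment's care is bookkeeping: $\ub(M)$ and $\ub(K)$ are defined with respect to different alphabets (and hence different reflections), but this never intervenes, since we only use that $\ub(K)\subseteq(1,K+1]$ and the reformulation of $q\in\ub(M)$ as uniqueness of the $q$-expansion of $1$.
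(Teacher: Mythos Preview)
Your proof is correct. Both your argument and the paper's reduce the nontrivial inclusion $(1,K+1]\cap\ub(M)\subseteq\ub(K)$ to Lemma~\ref{l72}, but they finish differently. The paper concludes $q\in\ub(K)$ via the lexicographic characterization of Lemma~\ref{l23}: from $q\in\ub(M)$ one gets $(M-\al_i(q))\prec\si^n(\al(q))\prec\al(q)$ for all $n\ge 1$, and since $K\le M$ this yields $(K-\al_i(q))\preceq(M-\al_i(q))\prec\si^n(\al(q))\prec\al(q)$; together with $\al_i(q)\in\{0,\ldots,K\}$ from Lemma~\ref{l72}, this is exactly the criterion of Lemma~\ref{l23}(i) applied over the alphabet $\{0,\ldots,K\}$. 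You instead work straight from the definition: once Lemma~\ref{l72} guarantees that $\al(q)$ is an expansion over $\{0,\ldots,K\}$, uniqueness over $\{0,\ldots,K\}$ is inherited from uniqueness over the larger alphabet $\{0,\ldots,M\}$. Your route is a bit more elementary, and it also sidesteps the implicit check (needed in the paper's approach) that the quasi-greedy expansion of $1$ computed over $\{0,\ldots,M\}$ agrees with the one computed over $\{0,\ldots,K\}$.
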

\begin{proof}
Since $\ub(K)\subseteq(1,K+1]$, it suffices to prove that $\ub(M)\cap(1,K+1]\subseteq\ub(K)$. Take $q\in\ub(M)\cap(1,K+1]$. Then by Lemma \ref{l23} it follows that $\al(q)=(\al_i(q))$ satisfies
\begin{equation}\label{e71}
(K-\al_i(q)) \preceq (M-\al_i(q))\prec \al_{i+1}(q)\al_{i+2}(q)\cdots \prec \al(q)\quad\textrm{for all}\quad i\ge 1.
  \end{equation}
Note by Lemma \ref{l72} that $0\le \al_i(q)\le K$ for all $i\ge 1$. Hence, by   (\ref{e71}) and Lemma \ref{l23} we conclude that $q\in\ub(K)$.
\end{proof}

\begin{proof} [Proof of Theorem \ref{t16}]
First we prove (i). Clearly, if $K<q_{KL}-1$ then $\bigcap_{J=K}^M\ub(J)=\emptyset$, and therefore (i) holds by Lemma \ref{l24} (i). If $q_{KL}-1\le K\le M$, then by repeatedly using Lemma \ref{l73} we conclude that
\begin{align*}
\bigcap_{J=K}^M\ub(J)&=\big(\ub(M)\cap\ub(M-1)\big)\cap\bigcap_{J=K}^{M-2}\ub(J)\\
&=(1,M]\cap\ub(M)\cap\bigcap_{J=K}^{M-2}\ub(J)\\
&=(1,M]\cap\big(\ub(M)\cap\ub(M-2)\big)\cap\bigcap_{J=K}^{M-3}\ub(J)\\
&= (1,M-1]\cap\ub(M)\cap\bigcap_{J=K}^{M-3}\ub(J)\\
&\quad\cdots\\
&=(1,K+1]\cap\ub(M).
\end{align*}
Therefore, by Theorem \ref{t14} we establish  (i).

As for (ii), we observe that for any $L\ge 1$,
\begin{equation}\label{e72}
\ub(L)=\Big(\ub(L)\setminus\bigcup_{J\ne L}\ub(J)\Big)\cup \bigcup_{J\ne L}\big(\ub(L)\cap\ub(J)\big).
\end{equation}
From (i) and Lemma \ref{l24} (i) it follows that $\dim_H(\ub(L)\cap\ub(J))<1$ for any $J\ne L$. Furthermore, by Lemma  \ref{l26} we have $\dim_H\ub(L)=1$ (see also, \cite[Theorem 1.6]{Komornik_Kong_Li_2015_1}). Therefore, (ii) immediately follows from \eqref{e72}.
\end{proof}

\section{Final remarks}
It was shown in Theorem \ref{t14} that the function $\phi(t)=\dim_H (\ub\cap(1, t])$ is a Devil's staircase in $(1,\f)$ (see Figure \ref{fig:1} for the sketch plot of $\phi$). Then  a natural question is to ask about the presence and position of plateaus for $\phi$, i.e., maximal intervals on which $\phi$ is constant. By Lemma \ref{l24} (i) and Theorem \ref{t14} it follows that $\phi(t)=0$ if and only if $t\le q_{KL}$, and $\phi(t)=1$ if and only if $t\ge M+1$. Hence, the first plateau of $\phi$ is $(1, q_{KL}]$, and the last plateau is $[M+1, \f)$.

Since $\phi(t)=\max_{q\le t}\dim_H\u_q$, an interval $[q_L, q_R]$ is a plateau of $\phi$ if and only if
 \begin{align*}
    \dim_H\u_p< \dim_H\u_{q_L} &&\textrm{for any}&& p<q_L,\\
   \dim_H\u_q\le \dim_H\u_{q_L} && \textrm{for any}&&  q_L\le q\le q_R,\\
   \dim_H\u_r> \dim_H\u_{q_L}& &\textrm{for any}&& r>q_R.
 \end{align*}
 By Lemma \ref{l24} it follows that for each plateau $[q_L, q_R]$ of $\phi$ we have $\dim_H\u_{q_L}=\dim_H\u_{q_R}$.

  {\bf Question 1}. Can we describe the plateaus of $\phi$ in $(q_{KL}, M+1)$?

Theorem \ref{t14} tells us that the set $\ub$ gets heavier towards the right, but does not say anything about the local weight.

 {\bf Question 2}. For any $t_2>t_1>1$, what is the local dimension $\dim_H(\ub\cap[t_1, t_2])$?

\subsection*{Acknowledgements}
{The authors thank the anonymous referee for many useful suggestions. The second author was supported by   NSFC No.~11401516. The third author was
supported by  NSFC No.~11671147, 11571144  and Science and Technology Commission of Shanghai Municipality (STCSM)  No.~18dz2271000. The forth  author was supported by NSFC No.~11601358.}

\normalsize

%

\end{document}